\documentclass[a4paper]{amsart}
\usepackage[paperheight=11in,paperwidth=8.5in]{geometry}
\usepackage{amsthm,amssymb,amsmath,hyperref, array, amsrefs}
\DefineSimpleKey{bib}{primaryclass}{}
\DefineSimpleKey{bib}{archiveprefix}{}

\BibSpec{arXiv}{%
  +{}{\PrintAuthors}{author}
  +{,}{ \textit}{title}
  +{}{ \parenthesize}{date}
  +{,}{ arXiv }{eprint}
  +{,}{ primary class }{primaryclass}
}
\usepackage{color, mdframed}
\usepackage{tikz}
\usetikzlibrary{decorations.pathreplacing}
\usepackage[utf8]{inputenc}
\usepackage{graphicx,enumerate}
\usepackage{mathtools,bm}
\usepackage{enumitem, booktabs}
\usepackage{amsfonts}

\allowdisplaybreaks
\makeatletter
\def\l@section{\@tocline{1}{12pt plus2pt}{0pt}{}{\bfseries}}
\def\l@subsection{\@tocline{2}{0pt}{2pc}{2pc}{}}
\makeatother
%
\setcounter{secnumdepth}{4}
\makeatletter
\def\subsection{\@startsection{subsection}{2}{\z@}%
	{-3.25ex\@plus -1ex \@minus -.2ex}%
	{1.5ex \@plus .2ex}%
	{\normalfont\bfseries\boldmath}}
\def\subsubsection{\@startsection{subsubsection}{3}%
	\z@{.5\linespacing\@plus.7\linespacing}{-.5em}%
	{\normalfont\bfseries\boldmath}}
\renewcommand\paragraph{\@startsection{paragraph}{4}{\z@}%
	{3.25ex \@plus1ex \@minus.2ex}%
	{-1em}%
	{\normalfont\normalsize\bfseries}}
\makeatother


\vfuzz2pt 
\hfuzz2pt 
\theoremstyle{plain}
\newtheorem{thm}{Theorem}[section]

\newtheorem{cor}[thm]{Corollary}
\newtheorem{lem}[thm]{Lemma}
\newtheorem{prop}[thm]{Proposition}

\theoremstyle{definition}
\newtheorem{defn}[thm]{Definition}

\theoremstyle{remark}
\newtheorem{rem}[thm]{Remark}

\theoremstyle{plain}

\numberwithin{equation}{section}

\theoremstyle{plain} 
\newcommand{\thistheoremname}{}
\newtheorem{genericthm}[thm]{\thistheoremname}

  \newtheorem*{genericthm*}{\thistheoremname}
\newenvironment{namedthm*}[1]
  {\renewcommand{\thistheoremname}{#1}%
   \begin{genericthm*}}
  {\end{genericthm*}}


\newcommand{\B}{{\mathbb B}}
\newcommand{\D}{{\mathbb D}}

\newcommand{\R}{{\mathbb R}}

\newcommand{\C}{{\mathbb C}}

\newcommand{\N}{{\mathbb N}}

\newcommand{\im}{{\rm Im}\,}
\newcommand{\re}{{\rm Re}\,}

\newcommand{\calA}{{\mathcal A}}
\newcommand{\calB}{{\mathcal B}}

\newcommand{\calD}{{\mathcal D}}

\makeatletter
\newcommand{\vast}{\bBigg@{4}}
\newcommand{\Vast}{\bBigg@{5}}
\makeatother

\newcommand{\frakA}{{\mathfrak A}}

\def\udot#1{\ifmmode\oalign{$#1$\crcr\hidewidth.\hidewidth
    }\else\oalign{#1\crcr\hidewidth.\hidewidth}\fi}

\def\R{\mathbb{R}}

\def\C{\mathbb{C}}

\def\beq{\begin{equation}}
\def\eeq{\end{equation}}
\makeatletter
\newcommand{\doublewidetilde}[1]{{%
  \mathpalette\double@widetilde{#1}%
}}
\newcommand{\double@widetilde}[2]{%
  \sbox\z@{$\m@th#1\widetilde{#2}$}%
  \ht\z@=.9\ht\z@
  \widetilde{\box\z@}%
}
\makeatother

\makeatletter
\def\@makefnmark{%
  \leavevmode
  \raise.9ex\hbox{\fontsize\sf@size\z@\normalfont\tiny\@thefnmark}}
\makeatother

\geometry{top=1in, bottom= 1in, left=1in, right=1in}

\begin{document}
	
\title[]{Boundedness and compactness of Bergman projection commutators in two-weight setting}
\author[]{Bingyang Hu, Ji Li, and Nathan A. Wagner}

\address{Bingyang Hu: Department of Mathematics and Statistics, Auburn University, 221 Parker Hall, Auburn, 221 Parker Hall, Auburn, AL 36849, USA, \thanks{Orcid ID: 0000-0001-8005-5666}}%
\email{bzh0108@auburn.edu}

\address{Ji Li: School of Mathematical and Physical Sciences, Macquarie University, NSW 2109, Australia, \thanks{Orcid ID: 0000-0003-0995-3054}}%
\email{ji.li@mq.edu.au}

\address{Nathan A. Wagner: Department of Mathematics, Brown University, 151 Thayer Street, Providence, RI 02906, USA, \thanks{Orcid ID: 0000-0003-0096-1541}}%
\email{nwagner8@gmu.edu}

\begin{abstract}
The goal of this paper is to study the boundedness and compactness of the Bergman projection commutators in two weighted settings via the weighted BMO and VMO spaces, respectively. The novelty of our work lies in the distinct treatment of the symbol $b$ in the commutator, depending on whether it is analytic or not, which turns out to be quite different. In particular, we show that an additional weight condition due to Aleman, Pott, and Reguera is necessary to study the commutators when $b$ is not analytic, while it can be relaxed when $b$ is analytic. In the analytic setting, we completely characterize boundedness and compactness, while in the non-analytic setting, we provide a sufficient condition which generalizes the Euclidean case and is also necessary in many cases of interest. Our work initiates a study of the commutators acting on complex function spaces with different symbols. \\

\noindent\textbf{AMS MSC:}  32A50, 47B47  \\
\noindent \textbf{Keywords:} Bergman projection, commutators, Bloom weights, compactness
\end{abstract}
\date{\today}

\thanks{\textbf{Funding Acknowledgement:} The first author was supported by the Simons Travel grant MPS-TSM-00007213. The second author is supported by Australian Research Council DP 220100285. The third author was supported by a National Science Foundation MSPRF DMS-2203272. }

\maketitle


\section{Introduction}
 
To illustrate the main idea, we first consider the most convenient case. Let $\R_{+}^2:=\left\{z: \im z>0\right\}$ be the upper half plane. For any $\alpha>-1$, the \emph{weighted $L_{\alpha}^2(\R_+)$ space} is given by 
$$
L_\alpha^2(\R_+^2):=\left\{f \ \textrm{measurable}: \left\|f \right\|^2_{L^2_\alpha(\R_+^2)}:=\int_{\R_{+}^2} |f(z)|^2 dA_\alpha(z)<+\infty \right\},
$$
where $dA_\alpha(z):=\frac{1}{\pi} (\alpha+1) \left(2 \im z \right)^{\alpha} dA(z)$ and $dA(z)=dxdy$ is the standard Lebesgue measure on $\R_{+}^2$. Here and henceforth, we will mostly suppress notation on the underlying measure parameter $\alpha$, though some of the function spaces we will define may depend on $\alpha.$ Let $H(\R_{+}^2)$ be the space of all holomorphic functions on $\R_{+}^2$ and
$$
A_\alpha^2(\R_{+}^2):=L_\alpha^2(\R_+^2) \cap H(\R_{+}^2). 
$$
The \emph{weighted Bergman projection} $L_\alpha^2(\R_+^2) \mapsto A_\alpha^2(\R_+^2)$ is given by 
$$
P_\alpha f(z):=i^{2+\alpha} \int_{\R_{+}^2} \frac{f(w)}{(z-\overline{w})^{2+\alpha}} dA_\alpha(w),
$$
and the \emph{weighted Berezin transform} is given by 
$$
P_\alpha^{+} f(z):=\int_{\R_{+}^2} \frac{f(w)}{\left| z-\overline{w} \right|^{2+\alpha}} dA_\alpha(w). 
$$
It is well known that $P_\alpha^{+}: L_\alpha^2(\R_+^2) \mapsto L_\alpha^2(\R_+^2)$. 

\vspace{0.1cm}

The aim of this paper is to study the behavior of the commutators $\left[b, P_\alpha\right]$ and $\left[b, P^+_\alpha\right]$, where $b$ is a measurable function on $\mathbb{R}_{+}^2$, within a weighted framework. To the best of our knowledge, this is the first attempt in the literature to explore (two) weight estimates for commutators of Bergman projections in complex settings. This work is motivated by the recent development in harmonic analysis of the two-weight theory in the Calder\'on--Zygmund setting, which dates back to an early paper of Bloom \cite{Bloom1985}, where he studied the (two) weighted estimates of the commutator $[b, H]$. This line of research has been further investigated recently by many authors. Here, we briefly recall some closely related results:
\vspace{-0.05cm}
\begin{enumerate}
    \item [$\bullet$] two-weight \emph{boundedness} for the commutator of \emph{Riesz transforms}, due to Holmes, Lacey and Wick \cite{HLW2017} using a new method via representation formula from Hyt\"onen  and decomposition via paraproducts;
    \item [$\bullet$] two-weight \emph{boundedness} for the commutator of \emph{Calder\'on--Zygmund operators}, due to Lerner, Ombrosi, and Rivera-R\'ios \cite{Lerner2017} via sparse domination, and due to Hyt\"onen \cite{Hytonen2021} by weak factorization;
    \item [$\bullet$] two-weight \emph{compactness} for \emph{Riesz transforms}, due to Lacey and the second author \cite{LaceyLi2022} via real median method;
\end{enumerate}
Another key heuristic comes from classical complex function theory, where much attention has been given to the unweighted theory of boundedness and compactness of the Bergman commutators, or equivalently, Hankel operators (for most of the cases). Here are some closely related ones:
\vspace{-0.02cm}
\begin{enumerate}
    \item [$\bullet$] boundedness and compactness of the Hankel operator \emph{$H_{\overline{b}}$ on the unit disc $\D$},  due to Axler \cite{Axler1986};

    \item [$\bullet$] boundedness and compactness of the Hankel operators \emph{$H_b$ and $H_{\overline{b}}$ on the unit ball $\B$}, due to Zhu \cite{Zhu1992};

    \item [$\bullet$] generalization of the above results in \emph{bounded symmetric domains}, due to B\'ekoll\'e, Berger, Coburn, and Zhu \cite{BBCZ1990};

    \item [$\bullet$] generalization of the above results in \emph{strongly pseudoconvex domains with smooth boundary}, due to Li \cite{Li1992, Li1994};

    \item [$\bullet$] generalization of the above results in strongly pseudoconvex \emph{domains with minimal smoothness boundary regularity}, due to Huo, Lanzani, Palencia, first and the third authors \cite{HHLPW2024}; 

    \item [$\bullet$] generalization of the above results on domains with bounded intrinsic geometry, due to Zimmer \cite{Zimmer2023}.
\end{enumerate}
Our ultimate goal is to combine the above two themes and understand how the techniques from the modern dyadic harmonic analysis can be adapted to the complex setting. 

\vspace{0.05cm}

Returning to our current setup, we begin by introducing several definitions. Let $\calD^1$ and $\calD^2$ be any adjacent dyadic systems on $\R$ (namely, the two dyadic systems on $\R$ which satisfies the $1/3$-trick). Denote $\calD:=\calD^1 \cup \calD^2$. For each $I \in \calD$ (or more generally any interval $I \subset \R$), we let 
\begin{equation} \label{20250121eq21}
Q_I:=\left\{z=x+iy \in \R_{+}^2: x \in I, y \in (0, |I|) \right\}
\end{equation} 
be the \emph{Carleson box} associated to $I$, and  
\begin{equation} \label{20250121eq22}
Q_I^{up}:=\left\{z=x+iy \in \R_{+}^2: x \in I, y \in \left(\frac{|I|}{2}, |I| \right) \right\}.
\end{equation}
be the \emph{upper Carleson box} associated with $I$. We write $c_I$ to denote the Euclidean center of the square $Q_I$. Now we introduce the \emph{dyadic B\'ekolle--Bonami $\calB_2$ weights}, which can be viewed as a natural analog of the Muckenhoupt $A_2$ weights in the real setting. 

\begin{defn} [Dyadic B\'ekolle--Bonami $\calB_2$ weights on $\R_+^2$]
Let $\mu$ be a weight on $\R_{+}^2$, we say $\mu$ belongs to the \emph{dyadic B\'ekolle--Bonami $\calB_2$ weight class} $\R_+^2$ if 
    \begin{equation} \label{20250106eq01}
    \left[\mu \right]_{\calB_2}:=\sup_{I \in \calD} \left( \frac{1}{A_\alpha(Q_I)} \int_{Q_I} \mu(z) dA_\alpha(z) \right) \left( \frac{1}{A_\alpha(Q_I)} \int_{Q_I} \mu^{-1}(z) dA_\alpha(z) \right)<+\infty. 
    \end{equation} 
\end{defn}

\begin{rem}
As a direct consequence of the definition of $\calD$, it is clear that $I \in \calD$ can be replaced by $I \subseteq \R$ in \eqref{20250106eq01}. Moreover, we note that the collection $\{Q_I\}_{I \in \calD}$ is also a \emph{Muckenhoupt basis}, that is, a collection of dyadic cubes such that the associated maximal operator enjoys the weighted $L^p$ estimates (see, e.g., \cite[Proposition 3.13]{GHK2022}). 
\end{rem}

Next, we define the associated weighted BMO spaces adapted to the Carleson tents $\{Q_I\}_{I \in \calD}$. Let $\alpha>-1$ be fixed. Given a weight function $\nu$ and a measurable subset $E \subset \R_{+}^2$, we let $\nu(E):= \int_E \nu \, dA_\alpha$, and $b_{Q_I}$ be the average of $b$ on $Q_I$ with respect to the measure $dA_\alpha.$  The \emph{weighted BMO space ${\rm BMO}_{\nu}(\R_{+}^2)$}  is given by 
\begin{defn}\label{def-BMOA}
Suppose $\nu\in \calB_2$. For $b \in L_{ loc}^1(\R_{+}^2, dA_\alpha)$, we say $b \in {\rm BMO}_{\nu} \left(\R_{+}^2 \right)$ if 
$$
\left\|b \right\|_{{\rm BMO}_{\nu}(\R_{+}^2)}:=\sup_{I \in \calD} {1\over \nu(Q_I)}\int _{Q_I} |b(z)-b_{Q_I}|dA_\alpha(z)<+\infty.
$$
Moreover, if $b \in H(\R_{+}^2)$, we say $b \in {\rm BMOA}_{\nu}\left(\R_{+}^2 \right).$
\end{defn}

Our first main theorem states as follows, which studies the boundedness of the commutators: 
\begin{thm} \label{mainthm001}
Let $\mu, \lambda \in \calB_2$ and set $\nu=\mu^{\frac{1}{2}}\lambda^{-\frac{1}{2}}$. Let further, $b \in L_{ loc}^1\left(\R_{+}^2, dA_\alpha \right) \cap H(\R_{+}^2) $. Then the commutators $[b, P_\alpha]$ and $[b, P^+_\alpha]$ are  bounded from  $L_\alpha^2 \left(\R_{+}^2, \mu \right)$ to $L_\alpha^2 \left(\R_{+}^2, \lambda \right) $ if and only if $b \in {\rm BMOA}_{\nu} \left(\R_{+}^2 \right)$. Moreover, we have the following Bloom-type estimates: 
$$ 
\|[b, P_\alpha]\|_{L_\alpha^2 \left(\R_{+}^2, \mu \right) \mapsto L_\alpha^2 \left(\R_{+}^2, \lambda \right) }, \; \|[b, P^+_\alpha]\|_{L_\alpha^2 \left(\R_{+}^2, \mu \right) \mapsto L_\alpha^2 \left(\R_{+}^2, \lambda \right) } \simeq \|b\|_{ {\rm BMOA}_{\nu} \left(\R_{+}^2 \right) }.
$$ 
\end{thm}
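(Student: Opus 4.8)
The plan is to prove the two directions of the equivalence separately, after a reduction that treats both operators at once. First I would observe that the kernel of $P_\alpha$ has modulus equal to that of $P_\alpha^+$: since $|i^{2+\alpha}|=1$ one has $|K_\alpha(z,w)|=|z-\overline w|^{-(2+\alpha)}$. Consequently, for any $b$ and any $f$,
\[ \left|[b,P_\alpha]f(z)\right|,\ \left|[b,P^+_\alpha]f(z)\right|\ \le\ C_b^+(|f|)(z):=\int_{\R_{+}^2}\frac{|b(z)-b(w)|}{|z-\overline w|^{2+\alpha}}\,|f(w)|\,dA_\alpha(w). \]
This reduces both upper bounds to a single estimate for the positive operator $C_b^+$, and uses no analyticity. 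Analyticity instead enters structurally: when $b\in H(\R_{+}^2)$ the product $b\cdot P_\alpha f$ is again holomorphic, so $(I-P_\alpha)M_bP_\alpha=0$ and hence $[b,P_\alpha]=-P_\alpha M_b(I-P_\alpha)$. This one-sided (single Hankel) structure is what will ultimately let the characterization dispense with the Aleman--Pott--Reguera condition needed in the non-analytic case.

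For sufficiency, assume $b\in{\rm BMOA}_\nu$. I would dominate $C_b^+$ pointwise by a sparse/dyadic operator built from the Carleson boxes, using that $\{Q_I\}_{I\in\calD}$ is a Muckenhoupt basis and that $\calD=\calD^1\cup\calD^2$ realizes the $1/3$-trick. Replacing the kernel by $dA_\alpha$-averages over the $Q_I$ and splitting $|b(z)-b(w)|\le|b(z)-b_{Q_I}|+|b(w)-b_{Q_I}|$ produces two paraproduct forms,
\[ \sum_{I\in\calD}|b(z)-b_{Q_I}|\,\langle|f|\rangle_{Q_I}\,\one_{Q_I}(z)\quad\text{and}\quad\sum_{I\in\calD}\big\langle|b-b_{Q_I}|\,|f|\big\rangle_{Q_I}\,\one_{Q_I}(z), \]
where $\langle\cdot\rangle_{Q_I}$ is the average against $dA_\alpha$. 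Each form is controlled by the oscillation bound $\frac{1}{A_\alpha(Q_I)}\int_{Q_I}|b-b_{Q_I}|\,dA_\alpha\le\|b\|_{{\rm BMO}_\nu}\,\langle\nu\rangle_{Q_I}$ together with a weighted Carleson embedding theorem for the pair $(L_\alpha^2(\mu),L_\alpha^2(\lambda))$; the identity $\nu=\mu^{1/2}\lambda^{-1/2}$ with $\mu,\lambda\in\calB_2$ is exactly what makes the relevant coefficient sequence Carleson. This yields $\|C_b^+\|_{L_\alpha^2(\mu)\to L_\alpha^2(\lambda)}\lesssim\|b\|_{{\rm BMOA}_\nu}$, hence the upper bounds for both commutators.

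For necessity I would argue by duality, which is cleaner and more robust in the two-weight setting than direct testing. The heuristic behind the lower bound is that on a pair of well-separated horizontal translates $Q_I^{up}$ and $Q_{I'}^{up}$ the difference $z-\overline w$ has essentially frozen argument and modulus $\simeq|I|$, so the factor $i^{2+\alpha}(z-\overline w)^{-(2+\alpha)}$ is a fixed nonzero constant (the separation being taken large in terms of $\alpha$ to suppress the phase variation), and testing the bilinear form on functions supported in these boxes reproduces an oscillation of $b$. To convert this into the clean estimate $\|b\|_{{\rm BMOA}_\nu}\lesssim\|[b,P_\alpha]\|$, and to obtain precisely the balance $\nu=\mu^{1/2}\lambda^{-1/2}$ rather than the Cauchy--Schwarz-lossy surrogate $\sqrt{\mu(Q_I)\lambda^{-1}(Q_I)}$, I would realize ${\rm BMO}_\nu$ as the dual of a weighted Bergman $H^1$ and use a weak factorization adapted to the pair $(\mu,\lambda)$ in the spirit of Hyt\"onen: pairing $b$ against a weakly factored element lets each factor be absorbed by a single application of the commutator, and summing over the factorization returns the dual norm. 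The same argument applies verbatim to $[b,P^+_\alpha]$.

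The main obstacle is the necessity direction, for two reasons. First, testing on holomorphic reproducing kernels is useless, since $(I-P_\alpha)k_w=0$ forces $[b,P_\alpha]k_w=0$; the lower bound must therefore come from genuinely non-holomorphic inputs, which is precisely the content of the factorization/duality step. Second, for general $\alpha>-1$ the kernel is not of constant sign on a single box, so its phase must be frozen by separating scales or locations, and the accompanying two-weight bookkeeping is what forces the correct appearance of $\nu$. By contrast, the sufficiency direction — the sparse domination of $C_b^+$ in the Bergman geometry of $\R_{+}^2$ and the weighted Carleson embedding — is technically routine once the Muckenhoupt-basis property of $\{Q_I\}$ and the $\calB_2$ hypotheses are in hand.
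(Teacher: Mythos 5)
Your sufficiency argument contains a genuine gap, located exactly where the paper is forced to use analyticity. Your sparse skeleton matches the paper's (Corollary \ref{SparseCom}), so after dominating both commutators by the positive operator $C_b^+$ and performing the dyadic splitting you must bound the two forms
\begin{equation*}
\sum_{I\in\calD}\left(\frac{1}{A_\alpha(Q_I)}\int_{Q_I}|b-b_{Q_I}|\,|f|\,dA_\alpha\right)\mathbf{1}_{Q_I}(z)
\quad\text{and}\quad
\sum_{I\in\calD}|b(z)-b_{Q_I}|\left(\frac{1}{A_\alpha(Q_I)}\int_{Q_I}|f|\,dA_\alpha\right)\mathbf{1}_{Q_I}(z),
\end{equation*}
and you claim these are controlled by the average oscillation bound $\frac{1}{A_\alpha(Q_I)}\int_{Q_I}|b-b_{Q_I}|\,dA_\alpha\lesssim\|b\|_{{\rm BMO}_\nu}\,\nu(Q_I)/A_\alpha(Q_I)$ plus a Carleson embedding. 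Neither form reduces to that average: in the first, $|f|$ sits inside the integral against the oscillation; in the second, the oscillation appears pointwise in $z$. In the Euclidean Calder\'on--Zygmund theory this step is repaired by John--Nirenberg, but the paper emphasizes that John--Nirenberg fails for BMO adapted to Carleson boxes. Worse, you state explicitly that your upper bound ``uses no analyticity''; if that were correct, it would prove that $b\in{\rm BMO}_\nu$ alone suffices whenever $\mu,\lambda\in\calB_2$, and the paper's counterexample in Section \ref{20250107subsec01} (where $\mu=\lambda=\sigma\in\calB_2\setminus\mathcal{APR}$, $\nu\equiv1$, and $b=\sigma^{1/2}\chi_{D(i/2,1/4)}\in\textnormal{BA}(\R_+^2)\subset{\rm BMO}^2(\R_+^2)\subset{\rm BMO}_{\nu}(\R_+^2)$) shows this is false: $[b,P]$, and a fortiori the pointwise-dominating operator $C_b^+$, is unbounded on $L^2(\R_+^2,\sigma)$. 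The indispensable missing step is the paper's Lemma \ref{OscLemma}: the mean value property of holomorphic $b$ gives the pointwise domination $|b(z)-b_{Q_I}|\lesssim\sum_{J\in\calD^j,\,J\subseteq I}\langle|b-b_{Q_J}|\rangle_{\widetilde{Q}_J}\mathbf{1}_{Q_J}(z)$, and only this converts both forms into $\mathcal{A}_{\calD^j}\left(M_{\calD^j}(f)\,\nu\right)$, after which the weighted bounds for the dyadic averaging and maximal operators conclude. The Hankel identity $[b,P_\alpha]=-P_\alpha M_b(I-P_\alpha)$ you mention is correct, but it never enters your estimates: passing to $C_b^+$ destroys exactly the cancellation it encodes.

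On necessity you propose a route genuinely different from the paper's: duality against a weighted Bergman $H^1$ plus a Bloom-adapted weak factorization in the spirit of Hyt\"onen, instead of the complex median method. As written this is not a proof: the required weak factorization of a $\nu$-weighted Bergman $H^1$ on $\R_+^2$ is not available off the shelf, and you do not construct it. Moreover, your stated motivation for avoiding direct testing --- that it produces the ``Cauchy--Schwarz-lossy surrogate'' $\mu(Q_I)^{1/2}\lambda^{-1}(Q_I)^{1/2}$ in place of $\nu(Q_I)$ --- is a misconception: $\mu,\lambda\in\calB_2$ forces $\nu\in\calB_2$, and then \eqref{20250117eq10} gives $\nu(Q_I)\simeq\mu(Q_I)^{1/2}\lambda^{-1}(Q_I)^{1/2}$, so there is no loss. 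This is precisely why the paper's argument --- testing on characteristic functions $\chi_{F_1}$ of measurable subsets of a vertically translated box $S_I$, with the phases of $b(z)-b(w)$ and of the kernel frozen by the complex median construction and a large separation parameter $\frakA$ --- already yields $\|b\|_{{\rm BMO}_\nu}\lesssim\|[b,P_\alpha]\|_{L_\alpha^2(\R_+^2,\mu)\to L_\alpha^2(\R_+^2,\lambda)}$, without any analyticity of $b$. Your observation that reproducing kernels are useless test inputs when $b$ is analytic is correct and consistent with the paper's choice of non-holomorphic test functions, but in your outline the entire burden of the lower bound rests on a factorization that remains a placeholder.
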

 
For compactness, we define the \emph{weighted VMO space ${\rm VMO}_{\nu}(\R_{+}^2)$} as follows:
\begin{defn}\label{def-VMOA}
Suppose $\nu\in \calB_2$. For $b \in {\rm BMO}_{\nu}\left(\R_{+}^2 \right)$, we say $b \in {\rm VMO}_{\nu}(\R_{+}^2)$ if 
\begin{align*}
&\lim_{|I| \rightarrow 0}\sup_{Q_I} {1\over \nu(Q_I)}\int _{Q_I} |b(z)-b_{Q_I}|dA_\alpha(z)= 0,\\
&\lim_{|c_I| \rightarrow \infty}\sup_{Q_I} {1\over \nu(Q_I)}\int _{Q_I} |b(z)-b_{Q_I}|dA_\alpha(z)= 0.
\end{align*}
If, in addition to satisfying the above, $b$ is holomorphic, then we say $b \in {\rm VMOA}_{\nu}\left(\R_{+}^2 \right).$
\end{defn}

Our second main theorem deals with the compactness of the commutators: 
\begin{thm}\label{mainthm002}
Let $\mu, \lambda \in \calB_2$  and set $\nu=\mu^{\frac{1}{2}}\lambda^{-\frac{1}{2}}$. Let further, $b \in {\rm BMOA}_{\nu} \left(\R_{+}^2 \right)$. Then the commutators $[b, P_\alpha]$ and $[b, P^+_\alpha]$ are compact from  $L_\alpha^2 \left(\R_{+}^2, \mu \right)$ to $L_\alpha^2 \left(\R_{+}^2, \lambda \right) $ if and only if $b \in {\rm VMOA}_{\nu}\left(\R_{+}^2 \right)$. 
\end{thm}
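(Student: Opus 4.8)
The plan is to establish both directions by leveraging the boundedness characterization from Theorem 1.4 together with a density/approximation scheme standard in commutator compactness arguments. For the easy direction — that $b \in {\rm VMOA}_\nu$ implies compactness — I would first identify a dense class of ``nice'' symbols inside ${\rm VMOA}_\nu$, namely those $b$ for which the two defining limits are attained in a quantitatively controlled way (e.g. symbols that are holomorphic and whose BMO-type oscillation is supported, up to small error, on Carleson boxes $Q_I$ with $|I|$ bounded away from $0$ and $\infty$ and $|c_I|$ bounded). For such a truncated symbol, I would show the commutator is compact directly, using that $P_\alpha$ and $P_\alpha^+$ have integral kernels that decay off the diagonal: restricting the symbol to a ``compact'' region of $\R_+^2$ (away from the boundary $\{y=0\}$, away from $\infty$, and away from $y=\infty$) makes the commutator an integral operator with a kernel that can be approximated in operator norm by finite-rank operators, hence compact. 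Then, given a general $b \in {\rm VMOA}_\nu$, I would approximate it in the ${\rm BMOA}_\nu$ norm by such truncated symbols $b_n$; by the quantitative Bloom estimate in Theorem 1.4,
$$
\bigl\|[b,P_\alpha] - [b_n,P_\alpha]\bigr\|_{L^2_\alpha(\mu)\to L^2_\alpha(\lambda)} = \bigl\|[b-b_n,P_\alpha]\bigr\| \simeq \|b-b_n\|_{{\rm BMOA}_\nu} \to 0,
$$
so $[b,P_\alpha]$ is an operator-norm limit of compact operators and therefore compact (and identically for $P_\alpha^+$).

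For the converse — compactness forces $b \in {\rm VMOA}_\nu$ — I would argue by contrapositive using a suitable family of normalized test functions that concentrate at the relevant scales. If $b \notin {\rm VMOA}_\nu$, then at least one of the two VMO limits fails, so there is a sequence of intervals $I_n$ with either $|I_n| \to 0$, or $|I_n| \to \infty$, or $|c_{I_n}| \to \infty$, along which the averaged oscillation stays bounded below. To each $I_n$ I would associate a normalized reproducing-kernel-type bump $f_n$ adapted to $Q_{I_n}$ (normalized in $L^2_\alpha(\mu)$), chosen so that $f_n \to 0$ weakly. I would then show that $\|[b,P_\alpha]f_n\|_{L^2_\alpha(\lambda)}$ stays bounded below, contradicting the fact that a compact operator maps weakly null sequences to norm-null sequences. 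The lower bound should come from the same type of localized testing used to prove necessity of the BMOA condition in Theorem 1.4, now tracked quantitatively along the degenerating sequence of boxes.

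The main obstacle I anticipate is the converse direction, specifically producing weakly null test functions with a robust lower bound on the commutator norm \emph{simultaneously} across all three degeneration regimes ($|I|\to 0$, $|c_I|\to\infty$, and the large-scale regime). The weak-null property requires that the $f_n$ spread out or escape to the boundary/infinity, while the lower bound requires that they still ``see'' the oscillation of $b$ on $Q_{I_n}$; reconciling these is delicate because the Bergman reproducing kernel, the weight $\nu$, and the geometry of the Carleson boxes all interact. In the analytic (holomorphic) setting the reproducing kernel of $A^2_\alpha(\R_+^2)$ gives natural normalized test families, and I expect the holomorphy of $b$ to simplify the lower-bound estimate — in particular one can likely replace oscillation estimates by pointwise derivative bounds on $b$ — but the careful bookkeeping of the two-weight constants $\mu,\lambda,\nu$ through these kernel estimates, uniformly along the degenerating sequence, is where the real work lies.
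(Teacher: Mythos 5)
Your proposal has genuine gaps in both directions, the more serious one being in the sufficiency half. Your approximation scheme hinges on an unproven density lemma: that every $b \in {\rm VMOA}_{\nu}$ can be approximated in the ${\rm BMOA}_{\nu}$ norm by holomorphic symbols whose oscillation is (essentially) supported on a compact family of Carleson boxes. There are two problems. First, no nontrivial such symbols exist: a holomorphic function whose oscillation vanishes on all boxes outside a compact region is constant on an open set, hence constant everywhere, so your ``dense class'' either degenerates to constants or, with the ``up to small error'' caveat, simply re-poses the original approximation problem. Second, you cannot produce holomorphic approximants by truncation, since multiplying by a cutoff destroys holomorphy; and even in the unweighted Euclidean Calder\'on--Zygmund setting the analogous density statement (VMO as the BMO-closure of nice functions) requires a real proof, with no off-the-shelf version available in the two-weight $\calB_2$ Bloom setting. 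The paper avoids this entirely by decomposing the \emph{operator} rather than the symbol: the kernel $K_\alpha$ is split into four pieces using smooth cutoffs in $|z-\overline{w}|$ and $1/(|z|+|w|)$; the three pieces living near the degenerate regimes have operator norm at most $\epsilon_\eta$, obtained by re-running the sparse domination proof of Theorem \ref{mainthm001} over only the small (or far-away) Carleson boxes, where the ${\rm VMOA}_\nu$ condition makes the local oscillations uniformly small; the remaining piece has compactly supported kernel and is shown to be Hilbert--Schmidt (using Lemma \ref{OscLemma} and the $\calB_2$ conditions), hence compact. This sidesteps any density question.

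In the necessity half your outline is closer to the paper's, but the two steps you defer as ``anticipated obstacles'' are exactly the content of the proof. The paper's test functions are not reproducing-kernel bumps: they are normalized indicators $f_j = \chi_{\widetilde{F}_{j,1}}/\left[\mu(B_{j,1})\right]^{1/2}$ of sets produced by the complex median method (the same device as in the necessity part of Theorem \ref{mainthm001}), after passing to a sparse subsequence of boxes with $50|I_{j+1}| \leq |I_j|$ so that the supports are pairwise disjoint; the uniform lower bound $\|[b,P_\alpha]f_j\|_{L_\alpha^2(\R_+^2,\lambda)} \gtrsim \delta_0$ then follows verbatim from the Theorem \ref{mainthm001} testing argument. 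Moreover, the paper never invokes ``compact operators map weakly null sequences to norm-null sequences'': instead it extracts a norm-convergent subsequence $[b,P_\alpha]f_{j_i} \to \phi$ with $\|\phi\| \gtrsim \delta_0$, and gets a contradiction by comparing the uniform bound on $[b,P_\alpha]\bigl(\sum_i f_{j_i}/i\bigr)$ (the disjoint supports make $\sum_i f_{j_i}/i$ lie in $L_\alpha^2(\R_+^2,\mu)$) with the $\log N$ divergence of $\sum_{i=1}^N \phi/i$. Your reproducing-kernel route is not doomed, but it would require separate verifications of weak nullity in the weighted space $L^2_\alpha(\R_+^2,\mu)$ and of the commutator lower bound against kernels, neither of which follows from Theorem \ref{mainthm001} as stated, whereas the indicator-function construction inherits both properties for free.
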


\begin{rem}
For the lower bound in Theorem \ref{mainthm001}, the proof shows the holomorphic assumption is not needed and we can simply assume $b \in \textnormal{BMO}_\nu \left(\R_+^2 \right).$ A similar statement holds true for Theorem \ref{mainthm002}.
\end{rem}

In the second part of this paper, our goal is to relax the analytic assumption on $b$, namely, we merely assume \textit{$b$ is a measurable function belonging to certain BMO classes}. It turns out that this is a more subtle situation. The main reason is that the system of dyadic Carleson boxes $\{Q_I\}_{I \in \calD^i}$ is \emph{not} a full dyadic resolution of $\R_+^2$, contrast to the standard dyadic systems in the real setting; in particular, 
\begin{enumerate} 
\item [$\bullet$] it does \emph{not} contain small dyadic cubes that are far from the boundary $\{z: \im z=0\}$;
\item [$\bullet$] and any Calder\'on--Zygmund type decomposition associated to $\{Q_I\}_{I \in \calD^i}$ \emph{fails} in this situation.
\end{enumerate} 
Consequently, $\calB_2$ weights (or more generally, $\calB_p$ weights\footnote{The weight class $\calB_p$ can be defined analogously to the Muckenhoupt $A_p$ weights. However, since this paper focuses on the weighted $L^2$ theory, we omit the detailed definition of $\calB_p$ weights.}) can be ill-behaved away from the boundary and do \emph{not} enjoy the classical properties of $A_\infty$ weights in the Euclidean setting such as a reverse H\"{o}lder inequality. 

\begin{rem}
In the case when $b$ is a holomorphic symbol (namely, the situations that were considered in Theorems \ref{mainthm001} and \ref{mainthm002}), the $\calB_2$ weight condition indeed suffices. This is because the behavior of a holomorphic function is uniquely determined by its boundary behavior along $\R$, which is precisely the subject of the Definitions \ref{def-BMOA} and \ref{def-VMOA}. 
\end{rem}

To obtain positive results for the commutator $[b, P_\alpha]$ for $b$ being measurable, we need to impose an extra condition on the weights $\mu, \lambda$ which was first introduced by Aleman, Pott, and Reguera (see, \cite{APR2019}).

\begin{defn} [Aleman--Pott--Regeura weights ($\mathcal{APR}$)]
Let $\mu$ be a weight on $\R_{+}^2$, we say $\mu$ is a \emph{Aleman--Pott--Regeura weight ($\mathcal{APR}$)} or is of \emph{bounded hyperbolic oscillation} if there exists some $C_\mu>0$, such that for any $I \subset \R$ and $z, w \in Q_I^{up}$, there holds
$$
C_{\mu}^{-1} \mu(w) \le \mu(z) \le C_\mu \mu(w). 
$$
\end{defn}

Recall that if $\mu \in \calB_2 \cap \mathcal{APR}$, then $\mu$ enjoys the \emph{reverse H\"older estimate}: there exists some $r_\mu>1$, such that for any $Q_I$ with $I \in \calD$, one has 
\begin{equation} \label{20240718eq69}
\left(\frac{1}{A_\alpha(Q_I)} \int_{Q_I} \left(\mu(z) \right)^{r_\mu} dA_\alpha(z) \right)^{1/r_\mu} \lesssim \frac{1}{A_\alpha(Q_I)} \int_{Q_I} \mu(z) dA_\alpha(z).
\end{equation}

In this situation, we obtain a sufficient condition for a one-weight inequality (the two-weight inequality does not naturally generalize). However, we \emph{do} need to assume the $\mathcal{APR}$ regularity condition on the weight $\sigma$ in this case, and this assumption is \emph{sharp} in the sense that the conclusion does not hold for generic $\calB_2$ weights.  More precisely, we refer the reader to Section \ref{20250107subsec01} for an example of a weight $\sigma \in \calB_2 \backslash \mathcal{APR}$ such that $[b, P_\alpha]$ fails to be bounded on $L_\alpha^2\left(\R_+^2, \sigma \right)$. The scheme of proof in this case is different compared to Theorems \ref{mainthm001} and \ref{mainthm002}, and parallels classical complex function theory. 

\vspace{0.1cm}
 
To state our main result, we introduce an $L^2$ variant of the BMO norm in the unweighted case. Unlike the classical Euclidean setting, we observe that the natural BMO space adapted to Bergman spaces does \emph{not} satisfy the John--Nirenberg inequality, because they can have singularities away from the boundary that prevent membership in $L^p_{loc}(\R_{+}^2)$ for large $p$. However, in the special case of note when the symbol $b$ is an analytic function, one can check the BMO oscillation condition is independent of $p$ and is equivalent to saying $b$ belongs to an analog of the Bloch space. See \cite{Si2022} for the definition of Bloch space and the relationship of the Bloch space to BMO in this setting and a form of the John--Nirenberg inequality (actually the more general setting of the Siegel upper-half space). 

\begin{defn}\label{def-BMO}
For $b \in L^2(\R_{+}^2)$, we say $b \in {\rm BMO}^2 \left(\R_{+}^2 \right)$ if 
$$
\left\|b \right\|_{{\rm BMO}^{2}(\R_{+}^2)}:=\sup_{I \in \calD} \left({1\over A(Q_I)}\int _{Q_I} |b(z)-b_{Q_I}|^2 dA(z) \right)^{1/2}<+\infty.
$$
\end{defn}

Here are some remarks for the above definition. 

\begin{rem} \label{EquivalentBMO}
\begin{enumerate}
\item In the definition above, we could replace the averages of $b$ over $Q_I$ with averages over disks of fixed radius in the hyperbolic metric. It is easy to show the BMO condition on Carleson tents implies the condition on hyperbolic balls, while the other direction follows from a geometric summation argument and the fact that the BMO space associated with hyperbolic balls is independent of the hyperbolic radius $r$. We refer the reader \cite[Section 5.1]{HHLPW2024} for more detailed information about different notations of BMOs over Carleson boxes. Moreover, one can check that this BMO norm is quantitatively equivalent to the norm defined using hyperbolic disks. See \cite{Si2022} for an alternative characterization of this BMO space using the Berezin transform.
\item The integrals and averages in Definition \ref{def-BMO} are taken with respect to the planar Lebesgue area measure corresponding to $\alpha=0$ (this BMO space does not ``see'' the parameter $\alpha$).
\end{enumerate}
\end{rem} 
 
 The version of Theorem \ref{mainthm001} in the one-weight case with non-analytic symbol is as follows:

\begin{thm} \label{mainthm001dropanalytic}
Let $\sigma \in \calB_2 \cap \mathcal{APR}$. Then if $b \in {\rm BMO}^2(\R_{+}^2)$, the commutator $[b, P_\alpha]$ is bounded from  $L_\alpha^2 \left(\R_{+}^2, \sigma \right)$ to $L_\alpha^2 \left(\R_{+}^2, \sigma \right) $.  Moreover, we have
$$ \|[b, P_\alpha]\|_{L_\alpha^2 \left(\R_{+}^2, \sigma \right) \to L_\alpha^2 \left(\R_{+}^2, \sigma \right) } \lesssim \|b\|_{ {\rm BMO}^{2} \left(\R_{+}^2 \right) }.
$$
Finally, the $\mathcal{APR}$ condition is sharp in this case. 
\end{thm}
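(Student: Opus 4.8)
The plan is to prove boundedness by analyzing the bilinear form associated to $[b,P_\alpha]$ and reducing it, via a hyperbolic (Whitney–type) discretization, to an $\ell^2$ estimate for a single positive kernel operator. Since boundedness $L_\alpha^2(\R_+^2,\sigma)\to L_\alpha^2(\R_+^2,\sigma)$ is equivalent to
$$
\bigl|\langle [b,P_\alpha]f,g\rangle_\alpha\bigr|\lesssim \|b\|_{{\rm BMO}^2}\,\|f\|_{L_\alpha^2(\R_+^2,\sigma)}\,\|g\|_{L_\alpha^2(\R_+^2,\sigma^{-1})},
$$
and because both $\sigma$ and $\sigma^{-1}$ lie in $\calB_2\cap\mathcal{APR}$, the two variables play symmetric roles. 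Writing out the Bergman kernel $K_\alpha(z,w)=i^{2+\alpha}(z-\overline w)^{-(2+\alpha)}$, the form becomes
\begin{equation*}
\langle [b,P_\alpha]f,g\rangle_\alpha=\iint_{\R_+^2\times\R_+^2}\bigl(b(z)-b(w)\bigr)K_\alpha(z,w)f(w)\overline{g(z)}\,dA_\alpha(w)\,dA_\alpha(z).
\end{equation*}
First I would fix the dyadic system $\calD^1$ and use the tiles $R_j:=Q_{I_j}^{up}$, $I_j\in\calD^1$, which partition $\R_+^2$ up to a null set and on each of which both the Bergman kernel and the weight are slowly varying. For $z\in R_j$ and $w\in R_k$ I decompose
\begin{equation*}
b(z)-b(w)=\bigl(b(z)-b_{R_j}\bigr)-\bigl(b(w)-b_{R_k}\bigr)+\bigl(b_{R_j}-b_{R_k}\bigr),
\end{equation*}
so that the two outer terms are local oscillations and the last is a ``smooth'' increment handled by telescoping.

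For the two local terms the point is to separate the unweighted $L^2$ oscillation of $b$ from the weight. Because $\sigma\in\mathcal{APR}$, the weight is comparable to its average $\sigma_{R_j}$ throughout $R_j$, so that $\int_{R_j}|g|^2\,dA_\alpha\simeq \sigma_{R_j}\int_{R_j}|g|^2\sigma^{-1}\,dA_\alpha$; combined with $A_\alpha(R_j)\simeq A_\alpha(Q_{I_j})$ and Definition \ref{def-BMO}, Cauchy--Schwarz on $R_j$ yields
\begin{equation*}
\int_{R_j}\bigl|b(z)-b_{R_j}\bigr|\,|g(z)|\,dA_\alpha(z)\lesssim \|b\|_{{\rm BMO}^2}\,A_\alpha(R_j)^{1/2}\sigma_{R_j}^{1/2}\,\bigl\|g\mathbf 1_{R_j}\bigr\|_{L_\alpha^2(\sigma^{-1})}.
\end{equation*}
Since $|K_\alpha(z,w)|$ is essentially constant $\simeq|K_\alpha(c_{I_j},c_{I_k})|$ on $R_j\times R_k$, the local-in-$z$ term is dominated by a positive bilinear form acting on the nonnegative sequences $p_j:=\|g\mathbf 1_{R_j}\|_{L_\alpha^2(\sigma^{-1})}$ and $q_k:=\|f\mathbf 1_{R_k}\|_{L_\alpha^2(\sigma)}$, with matrix entries built from $|K_\alpha(c_{I_j},c_{I_k})|$, the measures $A_\alpha(R_j),A_\alpha(R_k)$, and the weight averages $\sigma_{R_j}^{1/2}\sigma_{R_k}^{-1/2}$. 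The local-in-$w$ term is symmetric, using the reverse Hölder inequality \eqref{20240718eq69} for $\sigma$ (equivalently for $\sigma^{-1}$) to pass between averages and pointwise values where needed.

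The telescoping term requires connecting $R_j$ and $R_k$ by a chain of hyperbolically adjacent tiles; each step costs $\lesssim\|b\|_{{\rm BMO}^2}$, since neighbouring tiles have comparable averages, and the number of steps is $\lesssim 1+\beta(c_{I_j},c_{I_k})$, where $\beta$ denotes the hyperbolic distance. Thus all three contributions reduce to the boundedness on $\ell^2$ of a single positive matrix whose entries are controlled by
$$
A_\alpha(R_j)^{1/2}\sigma_{R_j}^{1/2}\,\bigl(1+\beta(c_{I_j},c_{I_k})\bigr)\,\bigl|K_\alpha(c_{I_j},c_{I_k})\bigr|\,A_\alpha(R_k)^{1/2}\sigma_{R_k}^{-1/2}.
$$
The main obstacle is precisely this last estimate: one must show the extra hyperbolic-distance factor $1+\beta$ does not destroy boundedness. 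I would prove it by a Schur test, using the geometric decay of $|K_\alpha|$ (which dominates the polynomial growth of $\beta$) to run the summation, and feeding in the $\calB_2$ condition on $\sigma$ to produce the pair of Schur testing functions; this is the discrete analogue of the boundedness of the Berezin transform $P_\alpha^+$ on $L_\alpha^2(\R_+^2,\sigma)$, now carrying a harmless logarithmic weight. Both hypotheses are indispensable here: $\mathcal{APR}$ localizes the weight so that the ${\rm BMO}^2$ Cauchy--Schwarz step decouples cleanly, while $\calB_2$ powers the global Schur test.

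For sharpness, I would exhibit, following Section \ref{20250107subsec01}, a weight $\sigma\in\calB_2\setminus\mathcal{APR}$ for which $[b,P_\alpha]$ is unbounded on $L_\alpha^2(\R_+^2,\sigma)$ for a suitable fixed $b\in{\rm BMO}^2$. The idea is to let $\sigma$ oscillate strongly within a single hyperbolic tile (violating $\mathcal{APR}$), while arranging the oscillation across scales so that the product of averages over every full Carleson box $Q_I$ stays bounded (preserving $\calB_2$). One then tests the commutator against functions essentially supported on that tile: the local oscillation step above, which crucially used $\sigma\simeq\sigma_{R_j}$, now fails, and the commutator detects the oscillation of the weight, forcing the operator norm to blow up. This shows the $\mathcal{APR}$ hypothesis cannot be removed.
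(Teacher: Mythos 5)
Your reduction is sound up to the last step, and it is worth noting that it is structurally parallel to the paper's argument: your telescoping term $(b_{R_j}-b_{R_k})$, with its cost $\lesssim \|b\|_{{\rm BMO}^2}\,(1+\beta(c_{I_j},c_{I_k}))$, is exactly the discrete counterpart of the paper's ${\rm BO}$-part of the splitting $b=b_1+b_2$ (Proposition \ref{splitting}) together with the Lipschitz estimate of Proposition \ref{Lip}, and your local oscillation terms play the role of the ${\rm BA}$-part handled by the Carleson measure argument. The genuine gap is the final claim: that the $\ell^2$ bound for the matrix carrying the factor $(1+\beta(c_{I_j},c_{I_k}))\,|K_\alpha(c_{I_j},c_{I_k})|$ follows from ``a Schur test, feeding in the $\calB_2$ condition on $\sigma$ to produce the pair of Schur testing functions.'' This does not work. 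A Schur test requires a \emph{pointwise} majorant $h$ satisfying $\int (1+\beta(z,w))|K_\alpha(z,w)|\,\sigma(w)^{-1/2}h(w)\,dA_\alpha(w)\lesssim \sigma(z)^{-1/2}h(z)$ together with its dual inequality, and $\calB_2$ is an averaging condition that supplies no such majorant: if one chooses $h$ so that the weight cancels in one of the two inequalities (e.g.\ $h=\sigma^{1/2}(\im\,\cdot)^{-s}$), the other inequality becomes a pointwise domination of a Berezin-type transform of $\sigma$ by $\sigma$ itself, which is far stronger than $\calB_2$ and false in general. Indeed, if a Schur test could be run off the $\calB_2$ condition alone, the Bekoll\'e--Bonami theorem (even without the logarithmic factor) would admit a trivial proof; this is precisely why the weighted theory here, as in \cite{PottReguera2013} and \cite{RTW2017}, proceeds through dyadic/sparse methods.

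Concretely, your matrix bound is equivalent (using $\mathcal{APR}$ to discretize) to the boundedness on $L^2_\alpha(\R_+^2,\sigma)$ of the integral operator with kernel $(1+\beta(z,w))|K_\alpha(z,w)|$, which is exactly what the paper must prove for its operator $\mathcal{Q}_\alpha^\varepsilon + P_\alpha^+$. The $\beta$-free part can be outsourced to \cite[Theorem 1]{RTW2017}, but the logarithmically perturbed part is where all the work lies: the paper establishes it via the sparse domination \eqref{ModSparse} and the modified two-weight condition \eqref{FurtherWeightedB2}, whose verification requires the reverse H\"older inequality \eqref{20240718eq69} --- and this is the \emph{only} place where $\mathcal{APR}$ (beyond your local decoupling use of it) actually enters the boundedness proof. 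So your proposal defers the entire analytic difficulty of the theorem to an unproven, and as stated unprovable, assertion. The fix is to replace the Schur test by the paper's route (or by the localized reverse H\"older argument of Theorem \ref{DropRH1}, which even dispenses with $\mathcal{APR}$ for this particular piece); your sharpness discussion, which follows the counterexample of Section \ref{20250107subsec01}, is fine in outline.
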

Observe that in this situation, the natural BMO space is unweighted and in fact coincides with the classical case of the unit disk \cite{Zhu07}, or the version discussed for the upper-half space in \cite{Si2022}. 

\begin{rem}
Theorem \ref{mainthm001dropanalytic} provides a sufficient condition on an $L^2$ symbol $b$ for the commutator to enjoy weighted bounds in the one weight setting. By the proof of Theorem \ref{mainthm001}, a necessary condition is given by 

$$
\sup_{I \in \calD} {1\over A(Q_I)}\int _{Q_I} |b(z)-b_{Q_I}| dA(z) <+\infty,$$
which for non-analytic symbols $b$ is strictly weaker than the $\rm{BMO}^2(\R_{+}^2)$ condition. This gap between necessary and sufficient conditions can be closed in certain important cases of interest; for example, the Lebesgue measure case, or more generally when the $\mathcal{B}_2$ weight is given  by an appropriate power of the Jacobian of a conformal map. See Theorem \ref{PartialConverse}  for more details. We do not know if this gap can be closed in the most general case when $\sigma \in \mathcal{B}_2.$ The lack of a John-Nirenberg inequality for BMO spaces in the Bergman setting appears to be a significant obstacle to obtaining the lower bound in the general setting, as the powerful median method yields a BMO condition with $L^1$ averages, not $L^2$ averages.

\end{rem}

\vspace{0.1cm}

The novelty of this work lies in the \emph{distinct} treatments when the symbol $b$ is either assumed to be analytic or merely measurable. In the first case, by using the mean value property of the analytic function, we prove our main results via a sparse domination approach inspired by recent developments in harmonic analysis. Moreover, we make use of a complex median method (introduced in \cite{Wei2024}), replacing the real median method used in the real case for Calder\'on--Zygmund type operators (see, e.g., \cite{LaceyLi2022}), to estimate the lower bound of the operator norm of the commutators. In the second case when $b$ is only assumed to be measurable, we investigate the problem differently via the behavior of the Hankel operators and prove that the $\mathcal{APR}$ condition is necessary in this case. To the best of our knowledge, our work furthers the line of the research on commutators associated with holomorphic projections acting on \emph{weighted} Lebesgue spaces with Muckenhoupt-type weights and with analytic or non-analytic symbols.

\vspace{0.1cm}

The structure of the current paper is as follows. Section \ref{s:2} and Section \ref{s:3} are devoted to proving the boundedness and compactness results Theorems \ref{mainthm001} and \ref{mainthm002} for $[b, P_\alpha]$ and $[b, P_\alpha^+]$, respectively, where $b$ is an analytic symbol. In Section \ref{s:4}, we study the boundedness of the commutators $[b, P_\alpha]$ without the analytic assumption on $b$. Moreover, we give an example to show that the $\mathcal{APR}$ condition is necessary in this case. Finally, in Section \ref{s:5}, we extend our main results to the unit ball $\B$ in $\C^n$. 

Throughout this paper, for $a ,b \in  \mathbb{R}$, $a\lesssim b$ means there exists a positive number $C$, which is independent of $a$ and $b$, such that $a\leq C\,b$. Moreover, if both $a \lesssim  b$ and $b\lesssim a$ hold, we say $a \simeq b$.

\medskip

\noindent {\bf Acknowledgment.} The authors thank Brett Wick and Ana \v{C}olovi\'c for their suggestions and helpful discussions on the sharpness of the $\mathcal{APR}$ condition in the non-analytic symbol case.

\bigskip
\section{Proof of Theorem \ref{mainthm001}}\label{s:2}

Our focus in this section is to prove the Bloom-type bounds for $[b, P_\alpha]$, while the proof of $[b, P_\alpha^+]$ follows from a simple modification, which we will comment at the end of both Sections \ref{20250116subsec01} and \ref{20250116subsec02}. Throughout the argument, the parameter $\alpha>-1$ will be fixed; hence, dependence on $\alpha$ will often be suppressed from the notation. 

To start with, we introduce the \emph{dyadic maximal operator} defined with respect to the dyadic grid $\calD^j$:
$$ M_{\calD^j} f(z):= \sup_{I \in \calD^j} \left(\frac{1}{A_\alpha(Q_I)}\int_{Q_I} |f| \, dA_\alpha \right) \, \mathbf{1}_{Q_I}(z), \quad f \in L^1_\alpha\left(\R_+^2 \right). $$
Similarly, we write $M_{\calD}$ to denote the dyadic maximal operator where the supremum is taken over $\calD^1 \cup \calD^2.$

Recall that the $\calB_2$ classes are all defined with respect to the underlying measure $A_\alpha.$ To prove the Bloom-type boundedness result for $[b, P_\alpha]$, we will modify an approach using sparse domination originally due to Lerner, Ombrosi, and Rivera-Rios in the Calder\'{o}n--Zygmund setting, see \cite{Lerner2017} . 

\subsection{Some sparse domination estimates}
We begin by collecting some sparse domination estimates, which are useful for proving the Theorem \ref{mainthm001}. First, given a fixed dyadic grid $\calD^j$, $j \in \{0,1\}$, we define the dyadic averaging operator
\begin{equation}
\mathcal{A}_{\calD^j}f(z):= \sum_{I \in \calD^j} \left( \frac{1}{A_\alpha(Q_I)} \int_{Q_I} |f| \, dA_\alpha \right) \mathbf{1}_{Q_I}(z),\quad f \in L^2_{\alpha}(\R_{+}^2)
\label{SparseOp}
\end{equation}
Note that since $A_\alpha(Q_I) \simeq_{\alpha} A_\alpha(Q_I^{up}) \simeq_\alpha |I|^{2+\alpha}$ for all $I \in \calD^j$ and the collection $\{Q_I^{up}\}_{I \in \calD^j}$ is pairwise disjoint, the operator $\mathcal{A}_{\calD^j}$ can be viewed as a ``pointwise sparse operator''. The following result establishes a pointwise sparse bound for the positive Bergman operator. 

\begin{prop}{\cite[Proposition 3.4]{PottReguera2013}} \label{BergSparse}
For any $f \in L_\alpha^2 \left(\R_+^2 \right)$, one has 
\begin{equation}
|P_\alpha^{+} f(z)| \lesssim \sum_{j=1,2} \mathcal{A}_{\calD^j}f(z),  \quad z \in \R_{+}^2.  \label{SparseDom}
\end{equation}
\end{prop}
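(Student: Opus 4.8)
The plan is to establish the pointwise bound \eqref{SparseDom} by dominating the (positive) kernel of $P_\alpha^+$ by a single normalized indicator of a Carleson box for each pair of points, and then integrating. Since the kernel $|z-\overline w|^{-(2+\alpha)}$ is positive, I would first note $|P_\alpha^+ f(z)| \le P_\alpha^+|f|(z)$, so it suffices to treat $f \ge 0$. Writing $z = x+iy$ and $w = u+iv$, the whole argument rests on the elementary comparison
$$
|z-\overline w| = \sqrt{(x-u)^2 + (y+v)^2} \simeq \max\{|x-u|,\, y,\, v\},
$$
which holds because $(y+v)^2 \ge \max\{y^2,v^2\}$ while $(x-u)^2 + (y+v)^2 \le 5\max\{|x-u|,y,v\}^2$.

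The key step is the pointwise kernel domination: I would show that for all $z, w \in \R_+^2$,
$$
\frac{1}{|z-\overline w|^{2+\alpha}} \lesssim \sum_{j=1,2} \sum_{\substack{I \in \calD^j \\ z,\, w \in Q_I}} \frac{1}{A_\alpha(Q_I)}.
$$
To produce a box realizing this, set $\ell := \max\{|x-u|, y, v\}$ and choose an interval $J \subset \R$ containing both $x$ and $u$ with $|J| \simeq \ell$ (possible since $|x-u| \le \ell$). By the $1/3$-trick defining the adjacent systems $\calD^1, \calD^2$, there is $I \in \calD^1 \cup \calD^2$ with $J \subseteq I$ and $|I| \le C_0 |J| \simeq \ell$. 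Then $x, u \in I$ and $\max\{y,v\} \le \ell \le |I|$, so $z, w \in Q_I$; and since $A_\alpha(Q_I) \simeq |I|^{2+\alpha} \simeq \ell^{2+\alpha} \simeq |z-\overline w|^{2+\alpha}$ (using $\int_0^{|I|}(2v)^\alpha\,dv \simeq |I|^{\alpha+1}$), this single box already yields $|z-\overline w|^{-(2+\alpha)} \lesssim A_\alpha(Q_I)^{-1}$. As every summand on the right is nonnegative, the displayed bound follows by keeping only that one term.

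With the kernel bound in hand, the proposition is immediate: integrating against $f\,dA_\alpha$ and exchanging the sum with the integral,
$$
P_\alpha^+ f(z) \lesssim \sum_{j=1,2}\sum_{\substack{I\in\calD^j \\ z\in Q_I}} \frac{1}{A_\alpha(Q_I)}\int_{Q_I} f\, dA_\alpha = \sum_{j=1,2}\mathcal{A}_{\calD^j}f(z),
$$
because the constraint $w \in Q_I$ turns the $w$-integral into an integral over $Q_I$, and $\mathbf 1_{Q_I}(z)=1$ precisely when $z\in Q_I$.

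The one genuinely delicate point I expect is the geometric selection of $I$. Because the Carleson boxes $\{Q_I\}$ reach down only to height $\simeq |I|$, capturing a single box that contains two prescribed points forces its side length to be comparable to the full quantity $\max\{|x-u|,y,v\}$, not merely to the horizontal separation $|x-u|$; the two adjacent dyadic systems are exactly what guarantees such a box exists with size comparable to $|z-\overline w|$ for \emph{every} pair $(z,w)$. Verifying the equivalence $A_\alpha(Q_I)\simeq|I|^{2+\alpha}$ and disposing of the measure-zero boundary cases (e.g.\ $y=|I|$, handled by enlarging $J$ by a fixed factor) are routine and will not affect the constants.
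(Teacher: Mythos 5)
Your proof is correct: the paper gives no proof of this proposition, citing \cite{PottReguera2013} instead, and your argument --- reducing to $f\ge 0$, the comparison $|z-\overline{w}|\simeq\max\{|x-u|,y,v\}$, and using the $1/3$-trick to capture a single Carleson box $Q_I$ containing both $z$ and $w$ with $A_\alpha(Q_I)\simeq|z-\overline{w}|^{2+\alpha}$ --- is essentially the same kernel-domination argument as in that cited reference. Your fix for the edge case (taking $|J|\simeq 2\ell$ so that $\max\{y,v\}<|I|$ strictly) is the right routine adjustment.
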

As an immediate corollary, we obtain the following sparse domination for the commutator $[b, P_\alpha]$.

\begin{cor} \label{SparseCom}
The following pointwise bound holds for any compactly supported, bounded $f$ and $b \in L^1_{ loc}(\R_{+}^2, dA_\alpha)$:
\begin{align} 
|[b,P_\alpha]f(z)|&  \lesssim  \sum_{j=1,2}\sum_{I \in \calD^j} \left( \frac{1}{A_\alpha(Q_I)} \int_{Q_I} |b-b_{Q_I}| |f| \, dA_\alpha \right) \mathbf{1}_{Q_I}(z)\\
&\qquad +  \sum_{j=1,2}\sum_{I \in \calD^j} |b(z)-b_{Q_I}| \left( \frac{1}{A_\alpha(Q_I)}  \int_{Q_I} |f| \, dA_\alpha \right) \mathbf{1}_{Q_I}(z)\nonumber\\
&  =: \mathcal{A}_bf(z)+ \mathcal{A}_b^* f(z).\nonumber
\end{align}

\begin{proof}
We estimate, using the triangle inequality and Proposition \ref{BergSparse}:
\begin{align*}
|[b,P_\alpha]f(z)| & \leq \int_{\R_{+}^2} \frac{|f(w)| \, |b(z)-b(w)|}{|z-\overline{w}|^{2+\alpha}} dA_\alpha(w)= P_\alpha^{+}\left(  \left| (b(z)-b(\cdot))f(\cdot) \right| \right)(z)\\
& \lesssim \sum_{j=1,2} \sum_{I \in \calD^j} \left( \frac{1}{A_\alpha(Q_I)} \int_{Q_I} |f(w)| |b(z)-b(w)| \, dA_\alpha(w) \right) \mathbf{1}_{Q_I}(z)\\
& \leq \sum_{j=1,2} \sum_{I \in \calD^j} \left( \frac{1}{A_\alpha(Q_I)} \int_{Q_I} |f(w)| |b(w)-b_{Q_I}| \, dA_\alpha(w) \right) \mathbf{1}_{Q_I}(z)\\
& \qquad + \sum_{j=1,2} \sum_{I \in \calD^j} |b(z)-b_{Q_I}| \left( \frac{1}{A_\alpha(Q_I)} \int_{Q_I} |f(w)| \, dA_\alpha(w) \right) \mathbf{1}_{Q_I}(z)\\
& = \mathcal{A}_bf(z)+ \mathcal{A}_b^* f(z).
\end{align*}
The proof is complete.
\end{proof}

\end{cor}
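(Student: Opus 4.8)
The plan is to reduce the commutator to the positive Berezin-type operator $P_\alpha^+$ and then invoke the pointwise sparse bound of Proposition \ref{BergSparse}. First I would write out the commutator explicitly. Since $P_\alpha f(z) = i^{2+\alpha} \int_{\R_{+}^2} \frac{f(w)}{(z-\overline{w})^{2+\alpha}}\, dA_\alpha(w)$, the two pieces $b(z)P_\alpha f(z)$ and $P_\alpha(bf)(z)$ combine into
$$[b,P_\alpha]f(z) = i^{2+\alpha}\int_{\R_{+}^2} \frac{(b(z)-b(w))f(w)}{(z-\overline{w})^{2+\alpha}}\, dA_\alpha(w).$$
The compact support and boundedness of $f$, together with $b \in L^1_{loc}(\R_{+}^2, dA_\alpha)$, guarantee that both integrals converge absolutely for each fixed $z$, so this manipulation is legitimate. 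Taking moduli and using $|i^{2+\alpha}|=1$ gives
$$|[b,P_\alpha]f(z)| \le \int_{\R_{+}^2} \frac{|b(z)-b(w)|\,|f(w)|}{|z-\overline{w}|^{2+\alpha}}\, dA_\alpha(w),$$
which is precisely $P_\alpha^+$, evaluated at the fixed point $z$, applied to the nonnegative (and $z$-dependent) integrand $w \mapsto |b(z)-b(w)|\,|f(w)|$.

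Next I would apply Proposition \ref{BergSparse} to the function $g_z(w):=|b(z)-b(w)|\,|f(w)|$ for each fixed $z$, obtaining
$$|[b,P_\alpha]f(z)| \lesssim \sum_{j=1,2}\sum_{I \in \calD^j}\left(\frac{1}{A_\alpha(Q_I)}\int_{Q_I}|b(z)-b(w)|\,|f(w)|\, dA_\alpha(w)\right)\mathbf{1}_{Q_I}(z).$$
To separate this into the two advertised pieces, I would introduce the average $b_{Q_I}$ and apply the triangle inequality $|b(z)-b(w)| \le |b(w)-b_{Q_I}| + |b(z)-b_{Q_I}|$ inside each integral. The first term depends only on $w$ and reproduces $\mathcal{A}_b f(z)$ after summing; in the second term the factor $|b(z)-b_{Q_I}|$ is constant in the integration variable $w$, so it pulls out of the integral against $Q_I$ and yields $\mathcal{A}_b^* f(z)$. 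Summing over $j$ and $I$ delivers the claimed bound.

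Since the substantive harmonic analysis—the pointwise sparse control of the positive operator—is already packaged into Proposition \ref{BergSparse}, I do not expect a serious obstacle here; the corollary is essentially a consequence of the kernel identity plus one application of the triangle inequality. The only point requiring mild care is the bookkeeping of the $z$-dependence: the quantity $|b(z)-b_{Q_I}|$ must be treated as a constant with respect to $w$, which is valid because we are proving a \emph{pointwise} inequality at each fixed $z$, and the indicator $\mathbf{1}_{Q_I}(z)$ ensures that $|b(z)-b_{Q_I}|$ is only ever evaluated for $z \in Q_I$, keeping each summand finite and the interchange of the sum and integral unproblematic.
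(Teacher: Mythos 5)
Your proposal is correct and follows essentially the same route as the paper's own proof: bound the commutator by $P_\alpha^{+}$ applied to $w \mapsto |b(z)-b(w)|\,|f(w)|$ at the fixed point $z$, invoke Proposition \ref{BergSparse}, and then split via the triangle inequality $|b(z)-b(w)| \le |b(w)-b_{Q_I}| + |b(z)-b_{Q_I}|$. The only difference is cosmetic: you spell out the kernel identity for $[b,P_\alpha]$ and the absolute convergence justification, which the paper leaves implicit.
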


The additional ingredient we need to prove Theorem \ref{mainthm001} is an easy estimate of the \emph{local oscillation} of a holomorphic function $b$ regarding the dyadic Carleson structure. The holomorphic property of $b$ is \emph{crucial} for our analysis here. Here and henceforth, let $\widetilde{Q}_J$ denote some fixed small inflation of the Carleson cube $Q_J$ obtained by expanding the interval $J$ to a slightly larger (non-dyadic) interval with the same center of $J$; this inflation is a technicality needed to exploit the mean-value property. 

\begin{lem} \label{OscLemma}
Let $b \in H(\R_{+}^2) \cap L^{1}_{loc}(\R_{+}^2, dA_\alpha)$, and $I \in \calD^j.$ Then for any $z \in Q_I$, one has\footnote{Our argument is also valid for any $b$ satisfying the sub-mean value property.}
\begin{equation}
|b(z)-b_{Q_I}| \lesssim \sum_{J \in \calD^j, J \subseteq I} \left( \frac{1}{A_\alpha(\widetilde{Q}_J)} \int_{\widetilde{Q}_J} |b(w)-b_{Q_J}| \, d A_\alpha(w) \right) \mathbf{1}_{Q_J}(z).
\end{equation}
\end{lem}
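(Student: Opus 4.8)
The plan is to reduce the estimate to a telescoping sum along the unique chain of dyadic intervals of $\calD^j$ lying over the point $\re z$, handling the ``interior'' telescoping differences by containment and the single ``bottom'' term by the sub-mean value property of $b$. Fix $z = x+iy \in Q_I$ with $I \in \calD^j$, and let $I = J_0 \supsetneq J_1 \supsetneq \cdots \supsetneq J_N$ be the chain of intervals in $\calD^j$ with $J_k \subseteq I$, $x \in J_k$, and $z \in Q_{J_k}$. Since $z \in Q_{J_k}$ forces $|J_k| > y$ while $|J_{k+1}| = \tfrac12 |J_k|$, the chain terminates at the unique $J_N$ with $|J_N|/2 \le y < |J_N|$; that is, $z \in Q_{J_N}^{up}$. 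Telescoping then yields
$$
|b(z) - b_{Q_I}| \le |b(z) - b_{Q_{J_N}}| + \sum_{k=0}^{N-1} \left| b_{Q_{J_{k+1}}} - b_{Q_{J_k}} \right|,
$$
and because $z \in Q_{J_k}$ for every $k$ (so $\mathbf{1}_{Q_{J_k}}(z) = 1$), it suffices to dominate each summand by the oscillation of $b$ over $\widetilde{Q}_{J_k}$.

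For the telescoping differences I would use $Q_{J_{k+1}} \subseteq Q_{J_k} \subseteq \widetilde{Q}_{J_k}$ together with $A_\alpha(Q_{J_{k+1}}) \simeq_\alpha A_\alpha(\widetilde{Q}_{J_k}) \simeq_\alpha |J_k|^{2+\alpha}$ to write
$$
\left| b_{Q_{J_{k+1}}} - b_{Q_{J_k}} \right| \le \frac{1}{A_\alpha(Q_{J_{k+1}})} \int_{Q_{J_{k+1}}} |b - b_{Q_{J_k}}| \, dA_\alpha \lesssim \frac{1}{A_\alpha(\widetilde{Q}_{J_k})} \int_{\widetilde{Q}_{J_k}} |b - b_{Q_{J_k}}| \, dA_\alpha,
$$
which is routine bookkeeping and does not use holomorphicity.

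The crux is the bottom term $|b(z) - b_{Q_{J_N}}|$, where the holomorphic (or sub-mean-value) hypothesis is essential. Since $z \in Q_{J_N}^{up}$ we have $\im z \simeq |J_N|$, so a Euclidean disk $D(z, c|J_N|)$ of suitably small radius is contained in the mildly inflated box $\widetilde{Q}_{J_N}$; indeed this is precisely the reason for passing from $Q_{J_N}$ to $\widetilde{Q}_{J_N}$, as the inflation leaves room on all sides of $z$. The function $w \mapsto |b(w) - b_{Q_{J_N}}|$ is subharmonic, so applying the sub-mean value property on $D(z, c|J_N|)$, and using that $\im w \simeq |J_N|$ on this disk (hence $dA_\alpha \simeq |J_N|^\alpha \, dA$ there, with $A_\alpha(D(z,c|J_N|)) \simeq |J_N|^{2+\alpha} \simeq A_\alpha(\widetilde{Q}_{J_N})$), I obtain
$$
|b(z) - b_{Q_{J_N}}| \le \frac{1}{A(D(z,c|J_N|))} \int_{D(z,c|J_N|)} |b - b_{Q_{J_N}}| \, dA \lesssim \frac{1}{A_\alpha(\widetilde{Q}_{J_N})} \int_{\widetilde{Q}_{J_N}} |b - b_{Q_{J_N}}| \, dA_\alpha.
$$

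Summing the two bounds along the chain and using $\mathbf{1}_{Q_{J_k}}(z) = 1$ recovers the stated inequality, after enlarging the finite sum over $\{J_0, \dots, J_N\}$ to the full (nonnegative) sum over $\{J \in \calD^j : J \subseteq I\}$. I expect the only genuine obstacle to be the bottom term: one must verify the geometric containment $D(z, c|J_N|) \subseteq \widetilde{Q}_{J_N}$ uniformly for $z \in Q_{J_N}^{up}$, and confirm the $dA$-versus-$dA_\alpha$ comparability on that disk. Both go through cleanly provided the inflation factor defining $\widetilde{Q}_J$ is chosen slightly larger than $1+c$; everything else is elementary.
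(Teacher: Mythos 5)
Your proposal is correct and follows essentially the same route as the paper's proof: the same chain of dyadic tents terminating at the unique $Q_{J_N}^{up}$ containing $z$, the same telescoping of averages handled by containment and doubling of $A_\alpha$, and the same use of the (sub-)mean value property at the terminal scale, where the inflation $\widetilde{Q}_{J_N}$ absorbs a hyperbolic-size disk around $z$. Your version merely makes explicit the disk containment $D(z,c|J_N|)\subseteq \widetilde{Q}_{J_N}$ and the $dA$-versus-$dA_\alpha$ comparability that the paper leaves implicit in its appeal to the mean value property over the inflated upper tent.
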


\begin{proof}
Fix $z \in Q_I.$  Let $k(z)$ be the unique non-negative integer and $I_{k(z)} \in \calD^j$ be the unique dyadic interval such that $|I_{k(z)}|=2^{-k(z)}|I|$, $I_{k(z)} \subseteq I$, and $z \in Q_{I_{k(z)}}^{up}$. Moreover, for any $\ell \leq k(z)$, let $I_\ell \in \calD^j$ denote the unique interval so that $z \in Q_{I_{\ell}}$, $|I_{\ell}|=2^{-\ell} |I|$, and $I_{k(z)} \subseteq I_{\ell} \subseteq I$. Note that $\{I_j\}_{j=0}^{k(z)}$ corresponds a chain of upper half tents that connected $z$ with $Q_I^{up}$. 

\vspace{0.05cm}

By the mean value property for holomorphic functions, it is easy to see
\begin{align*}
 |b(z)-b_{Q_{I_{k(z)}}}| & \lesssim \frac{1}{A_\alpha(\widetilde{Q}_{I_{k(z)}}^{up})} \int_{\widetilde{Q}_{I_{k(z)}}^{up}} |b-b_{Q_{I_{k(z)}}}| \, dA_\alpha \\
 & \lesssim \frac{1}{A_\alpha(\widetilde{Q}_{I_{k(z)}})} \int_{\widetilde{Q}_{I_{k(z)}}} |b-b_{Q_{I_{k(z)}}}| \, dA_\alpha.
   \end{align*}
Then using the above estimates with the triangle inequality and the doubling property of the $A_\alpha$ measure, we can estimate
\begin{align*}
|b(z)-b_{Q_I}|  & \leq |b(z)-b_{Q_{I_{k(z)}}}|+ \sum_{j=1}^{k(z)} |b_{Q_{I_j}}-b_{Q_{I_{j-1}}}| \\
&\lesssim \sum_{J \in \calD^j, J \subseteq I} \frac{1}{A_\alpha(\widetilde{Q}_J)} \int_{\widetilde{Q}_J} |b-b_{Q_J}| \, d A_\alpha.
\end{align*}
The proof is complete.
\end{proof}
We are now ready to prove Theorem \ref{mainthm001}, and we split the proof into two parts. 

\subsection{Sufficiency} \label{20250116subsec01}
Let $b \in {\rm BMOA}_{\nu}\left(\R_{+}^2 \right).$ Note that the operator $\mathcal{A}_b^*$ is the adjoint of $\calA_b$, under the usual $L^2_\alpha$ pairing, of the operator $\mathcal{A}_b$. Therefore, to prove that $[b,P_\alpha]$ is bounded from $L^2_\alpha(\R_{+}^2, \mu)$ to $L^2_\alpha(\R_{+}^2, \lambda)$ , by Corollary \ref{SparseCom} and duality, it suffices to prove that the operator $\mathcal{A}_b$ is bounded from $L^2_\alpha(\R_{+}^2, \mu)$ to $L^2_\alpha(\R_{+}^2, \lambda)$ as well as from $L^2_\alpha\left(\R_{+}^2, \lambda^{-1}\right)$ to $L^2_\alpha(\R_{+}^2, \mu^{-1}).$ 

By symmetry, noting $\nu= \left(\mu/\lambda \right)^{1/2}=(\lambda^{-1}/\mu^{-1})^{1/2}$ and also the fact that $\mu, \lambda \in \mathcal{B}_2$ implies $\mu^{-1}, \lambda^{-1} \in \mathcal{B}_2$, it is enough to prove the first assertion. Take $f \in L^2_\alpha(\mu).$ For convenience, split over the two dyadic grids:
\begin{align*}
\mathcal{A}_b f(z)& = \sum_{j=1,2}\sum_{I \in \calD^j} \left( \frac{1}{A_\alpha(Q_I)} \int_{Q_I} |b(w)-b_{Q_I}| |f(w)| \, dA_\alpha \right) \mathbf{1}_{Q_I}(z)\\
&:= \mathcal{A}_b^1 f(z)+\mathcal{A}_b^2 f(z).
\end{align*}
We estimate $\mathcal{A}_b^j$ using Lemma \ref{OscLemma}, the $b \in {\rm BMOA}_{\nu}\left(\R_{+}^2 \right)$ condition, and the doubling property of $\nu$ which guarantees $\nu(\widetilde{Q}_J) \lesssim \nu(Q_J) \lesssim \nu(Q_J^{up}) $ with implicit constants depending on $[\nu]_{\calB_2}$, which we do not track, as follows:
\begin{align*}
\mathcal{A}_b^j f(z)& = \sum_{I \in \calD^j} \left( \frac{1}{A_\alpha(Q_I)} \int_{Q_I} |b(w)-b_{Q_I}| |f(w)| \, dA_\alpha(w) \right) \mathbf{1}_{Q_I}(z)\\
& \lesssim \sum_{I \in \calD^j} \left( \frac{1}{A_\alpha(Q_I)} \sum_{\substack{J \in \calD^j, J \subseteq I}} \frac{1}{A_\alpha(\widetilde{Q}_J)} \int_{\widetilde{Q}_J} |b(w)-b_{Q_J}| \, dA_\alpha(w) \cdot   \int_{Q_J} |f(w)| \, dA_\alpha(w) \right) \mathbf{1}_{Q_I}(z) \\
& \lesssim \|b\|_{{\rm BMOA}_{\nu}\left(\R_{+}^2 \right)} \cdot \sum_{I \in \calD^j} \left( \frac{1}{A_\alpha(Q_I)} \sum_{\substack{J \in \calD^j, J \subseteq I}}   \frac{\nu(Q_J) }{A_\alpha(Q_J)} \int_{Q_J} |f(w)| \, dA_\alpha(w) \right) \mathbf{1}_{Q_I}(z) \\
& \lesssim \|b\|_{{\rm BMOA}_{\nu}\left(\R_{+}^2 \right)} \cdot \sum_{I \in \calD^j} \left( \frac{1}{A_\alpha(Q_I)} \sum_{\substack{J \in \calD^j, J \subseteq I}}   \frac{\nu(Q_J^{up}) }{A_\alpha(Q_J)} \int_{Q_J} |f(w)| \, dA_\alpha(w) \right)  \mathbf{1}_{Q_I}(z) \\
& \leq  \|b\|_{{\rm BMOA}_{\nu}\left(\R_{+}^2 \right)} \cdot  \sum_{I \in \calD^j} \left( \frac{1}{A_\alpha(Q_I)} \int_{Q_I} M_{\calD^j}(f)(w) \nu(w) \, dA_\alpha(w)  \right) \mathbf{1}_{Q_I}(z) \\
& =  \|b\|_{{\rm BMOA}_{\nu}\left(\R_{+}^2 \right)} \cdot \mathcal{A}_{\calD^j}\left(M_{\calD^j}(f) \nu\right)(z).
\end{align*}

Therefore, we can use this pointwise control to estimate the Lebesgue norm as follows, using the standard weighted bounds for the dyadic operators $\mathcal{A}_{\calD^j}$ and $M_{\calD^j}$ (see, e.g., \cite[Theorem 1]{RTW2017} or \cite[Theorem 4.3]{GHK2022}):
\begin{align*}
\|\mathcal{A}_b^j f\|_{L^2(\lambda)} & \lesssim \|b\|_{{\rm BMOA}_{\nu}\left(\R_{+}^2 \right)} \cdot\| \mathcal{A}_{\calD^j}\left(M_{\calD^j}(f) \nu \right)\|_{L^2_\alpha(\R_{+}^2, \lambda)}\\
& \lesssim  \|b\|_{{\rm BMOA}_{\nu}\left(\R_{+}^2 \right)} \cdot\| M_{\calD^j}(f) \nu\|_{L^2_\alpha(\R_{+}^2, \lambda)} \\
& =   \|b\|_{{\rm BMOA}_{\nu}\left(\R_{+}^2 \right)} \cdot\| M_{\calD^j} f \|_{L^2_\alpha(\R_{+}^2, \mu)} \\
&\lesssim   \|b\|_{{\rm BMOA}_{\nu}\left(\R_{+}^2 \right)} \cdot\| f \|_{L^2_\alpha(\R_{+}^2, \mu)},
\end{align*}
which completes the proof that $\mathcal{A}_b^j$ is bounded from $L^2_\alpha(\R_{+}^2, \mu)$ to $L^2_\alpha(\R_{+}^2, \lambda)$.  

To this end, note that the above argument also extends to a proof for the sufficient part of the boundedness of $[b, P_\alpha^+]$ under the two-weight setting, and we would like to leave the details to the interested reader. 

\subsection{Necessity} \label{20250116subsec02}
Suppose the commutator $[b, P_\alpha]$ is bounded from  $L_\alpha^2 \left(\R_{+}^2, \mu \right)$ to $L_\alpha^2 \left(\R_{+}^2, \lambda \right) $, that is, for every $f\in L_\alpha^2 \left(\R_{+}^2, \mu \right)$,
$$
\|[b, P_\alpha](f)\|_{L_\alpha^2 \left(\R_{+}^2, \lambda \right)}\lesssim \|f\|_{ L_\alpha^2 \left(\R_{+}^2, \mu \right)}.
$$
We now prove that  $b \in {\rm BMOA}_{\nu} \left(\R_{+}^2 \right)$. To do so, we will make use of the so-called \emph{complex median method} introduced in \cite{Wei2024}. The following lemma is \emph{key} to our argument. 

\begin{lem} \label{ComplexMedian}
Let $b$ be a complex measurable function on $\R_{+}^2$ and $Q$ be any cube on $\R_+^2$. Then there exist orthogonal lines $\ell_1, \ell_2 \in \mathbb{C}$ with intersection ${\bf b}_Q^{cm}$ that\footnote{Here, the upper script ``$cm$" in ${\bf b}_Q^{cm}$ is short for ``complex median".} divide $\mathbb{C}$ into four ``quadrants'' $T_Q^j$, $j=1,2,3,4$, placed in counterclockwise ordering so that the following estimate holds for $j \in \{1,2,3,4\}$:
$$A_\alpha(\{z \in Q: b(z) \in T_Q^j \}) \geq \frac{A_\alpha(Q_I)}{16}.$$
\end{lem}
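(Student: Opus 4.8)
The plan is to pass to the pushforward measure and reduce the statement to a two-dimensional median (ham--sandwich type) problem. Write $M := A_\alpha(Q)$ and define a finite Borel measure $\nu$ on $\C \cong \R^2$ by $\nu(E) := A_\alpha(\{z \in Q : b(z) \in E\})$, so that $\nu(\C) = M$ and $A_\alpha(\{z \in Q: b(z) \in T_Q^j\}) = \nu(T_Q^j)$. Thus it suffices to produce a point ${\bf b}_Q^{cm} \in \C$ and two orthogonal lines through it cutting $\C$ into four closed quadrants, labelled counterclockwise, each of $\nu$-mass at least $M/16$. The only one-dimensional input needed is the elementary median: for the real-valued function $\langle b, u\rangle$ on $Q$ there is a value $h(u)$ whose two closed half-lines each carry mass $\ge M/2$; equivalently, the closed half-plane $\{x : \langle x, u\rangle \ge h(u)\}$ and its complement each have $\nu$-measure $\ge M/2$.

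For a fixed direction $u_\theta = (\cos\theta, \sin\theta)$, let $\ell_1(\theta), \ell_2(\theta)$ be median lines orthogonal to $u_\theta$ and $u_\theta^\perp$ respectively, crossing at $p(\theta)$, and let $m_j(\theta) = \nu(T^j(\theta))$ be the four closed-quadrant masses in counterclockwise order. Writing each closed half-plane as the union of two adjacent closed quadrants, the four median inequalities read $m_1 + m_2,\, m_2 + m_3,\, m_3 + m_4,\, m_4 + m_1 \ge M/2$. These alone permit a degenerate diagonal configuration ($m_1 = m_3 = M/2$, $m_2 = m_4 = 0$), so I must rotate to break it. Introduce the balance functional $D(\theta) := (m_1 + m_3)(\theta) - (m_2 + m_4)(\theta)$. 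Rotating the cross by a quarter turn gives the same pair of lines with labels shifted cyclically, whence $D(\theta + \pi/2) = -D(\theta)$. When $\nu$ has a smooth, everywhere-positive density the median in each direction is unique and varies continuously with $\theta$, so $D$ is continuous and, by the intermediate value theorem, vanishes at some $\theta^*$; and at a zero of $D$ the four median \emph{equalities} $m_1 + m_2 = \cdots = M/2$ together with $m_1 + m_3 = m_2 + m_4 = M/2$ force $m_1 = m_2 = m_3 = m_4 = M/4$.

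For a general $\nu$ I would approximate: set $\nu_\eps := \nu * G_\eps + \eps\, \gamma$, where $G_\eps$ is a Gaussian mollifier and $\gamma$ a fixed Gaussian, so that $\nu_\eps$ has a smooth strictly positive density and $\nu_\eps \to \nu$ weakly with $\nu_\eps(\C) \to M$. The previous paragraph furnishes orthogonal median lines for $\nu_\eps$ with all four quadrants of mass $\nu_\eps(\C)/4$; because every quadrant must capture a definite fraction of the (essentially fixed) bulk of $\nu$, the crossing points $p_\eps$ stay in a fixed compact set, so I can extract a convergent subsequence of lines. Passing to the limit with the portmanteau inequality for closed sets (comparing each limiting closed quadrant $T^j$ with slightly enlarged closed quadrants gives $\nu(T^j) \ge M/4$) yields the desired partition, comfortably within the required $M/16$.

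The step I expect to be the genuine obstacle is precisely the \emph{discontinuity and non-uniqueness of medians} for a general, possibly atomic, $\nu$: when the distribution of $b$ has atoms or gaps, the median line, the crossing point $p(\theta)$, and hence $D(\theta)$ can jump as $\theta$ varies, so the naive intermediate value argument fails and one cannot directly arrange $D(\theta^*) = 0$. The approximation above is designed to sidestep this, and the generous constant $16$ (versus the $4$ available in the nondegenerate case) leaves ample room to absorb both the approximation error and any double counting of mass lying on the median lines, where atoms are in fact harmless, since boundary mass is shared by two adjacent closed quadrants.
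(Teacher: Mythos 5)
The paper never proves this lemma internally: it is imported wholesale as the key tool of the complex median method and attributed to \cite{Wei2024}, so there is no in-paper argument to compare yours against line by line. Your proof is correct and self-contained, and it follows a genuinely different route from the cited one: you push $b$ forward to a finite Borel measure $\nu$ on $\C$ and run the classical quadrisection (``pancake'') argument --- mollify so that $\nu_\varepsilon$ has a smooth strictly positive density, making directional medians unique and continuous, exploit the antisymmetry $D(\theta+\pi/2)=-D(\theta)$ coming from the cyclic relabeling of the quadrants, apply the intermediate value theorem to get an exactly quartering cross for $\nu_\varepsilon$, and pass to the limit by compactness of the crossing points plus the portmanteau inequality. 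This yields the stronger conclusion that all four \emph{closed} quadrants have mass at least $A_\alpha(Q)/4$, which comfortably implies the stated $1/16$ bound; by contrast, the elementary median case analysis behind \cite{Wei2024} avoids the approximation and topological machinery at the price of the worse constant. Your identification of the genuine obstacle (non-uniqueness/discontinuity of medians for atomic $\nu$) and your remedy are sound, and your compactness step is fine: if the apexes escaped to infinity, the quadrant opening away from a ball carrying the bulk of $\nu$ would miss that ball entirely, contradicting the uniform mass lower bound. Two details deserve to be spelled out. First, the ``slightly enlarged closed quadrants'' in the limiting step must be enlarged \emph{angularly} --- widen the half-opening from $\pi/4$ to $\pi/4+\delta$ and pull the apex back along the bisector --- since a quadrant rotated by an arbitrarily small angle is not contained in any bounded metric neighborhood of the limit quadrant; with the conical enlargement one gets $T^j_\varepsilon \subseteq T^j_\delta$ along the subsequence, hence $\nu(T^j_\delta) \ge M/4$ by portmanteau, and continuity from above as $\delta \downarrow 0$ finishes. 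Second, your implicit reading of the lemma with closed, overlapping quadrants is not merely convenient but forced: with genuinely disjoint quadrants the statement is false for any positive constant (take $b$ constant, so $\nu$ is a point mass sitting at the intersection of the lines), and the closed reading is exactly what the paper uses, since the application only needs $Q_I=\bigcup_{j} B_j$ as a cover rather than a partition.
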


To make use of the above lemma, we fix a Carleson tent $Q_I$ with $I \in \calD^j$, and choose the test set to be $S_I$, where
 $$ S_I :=\left\{z=x+iy \in \R_{+}^2: x \in I, y \in (\frakA |I|, (\frakA+1) |I|) \right\}. $$
 Here, $\frakA$ is a large positive constant to be determined later. The following facts are easy to prove and useful in the sequel: for $z \in Q_I$ and $w \in S_I$, we have
 \begin{enumerate} 
 \item [$\bullet$] the kernel estimate:
\begin{equation}  \label{kernelestimate} 
\frac{1}{|z-\overline{w}|^{2+\alpha}} \simeq_{\frakA} \frac{1}{A_\alpha(Q_I)} \simeq_{\frakA} \frac{1}{A_\alpha(S_I)}. 
\end{equation}
\item [$\bullet$] the angle estimate:
$$ 
\pi- \frac{\pi}{100}\leq   \arg(z-\overline{w}) \leq \pi + \frac{\pi}{100}.
$$
\end{enumerate}
Finally, for $\widetilde{I}:=2\frakA I$, we let $Q_{\widetilde{I}}$ be a large Carleson box that contains both $Q_I$ and $S_I$ (see, Figure \ref{Fig1} below). 

\vspace{0.1cm}

\begin{center}
\begin{figure}[ht]
\begin{tikzpicture}[scale=4]
\draw (-.5, 0) [->] -- (1.35, 0);
\fill [opacity=2] (0.25, 0) circle [radius=.2pt];
\fill [opacity=2] (0.55, 0) circle [radius=.2pt];
\draw (0.25, 0) -- (0.25, 0.3) -- (0.55, 0.3) -- (0.55, 0);
\draw (-0.1, 0) -- (-0.1, 0.9) -- (0.9, 0.9) -- (0.9, 0);
\draw (0.25, 0.5) -- (0.25, 0.8) -- (0.55, 0.8) -- (0.55, 0.5)--(0.25, 0.5);
\draw (0.25, -0.5) -- (0.25, -0.8) -- (0.55, -0.8) -- (0.55, -0.5)--(0.25, -0.5);
\fill (0.34, 0) node [below] {\small{$I$}}; 
\fill (0.8, 0) node [below] {\small{$\widetilde{I}$}};
\fill (1.3, 0) node [above] {\small{$\R_+^2$}}; 
\fill [opacity=2, red] (0.45, 0.2) circle [radius=.3pt];
\fill (0.49, 0.16) node [below] {\small{$z$}};
\fill [opacity=2, blue] (0.35, 0.7) circle [radius=.3pt];
\fill (0.35, 0.69) node [right] {\small{$w$}};
\fill [opacity=2, blue] (0.35, -0.7) circle [radius=.3pt];
\fill (0.35, -0.705) node [right] {\small{$\overline{w}$}};
\fill (0.56, 0.2) node [right] {\small{$Q_I$}};
\fill (0.56, 0.7) node [right] {\small{$S_I$}};
\fill (0.91, 0.8) node [right] {\small{$Q_{\widetilde{I}}$}};
\fill (0.56, -0.7) node [right] {\small{$\overline{S_I}$}};
\draw [dashed] (0.45, 0.19) -- (0.35, -0.7); 
\end{tikzpicture}
\caption{Construction of $S_I$ and $Q_{\widetilde{I}}$}
\label{Fig1}
\end{figure}
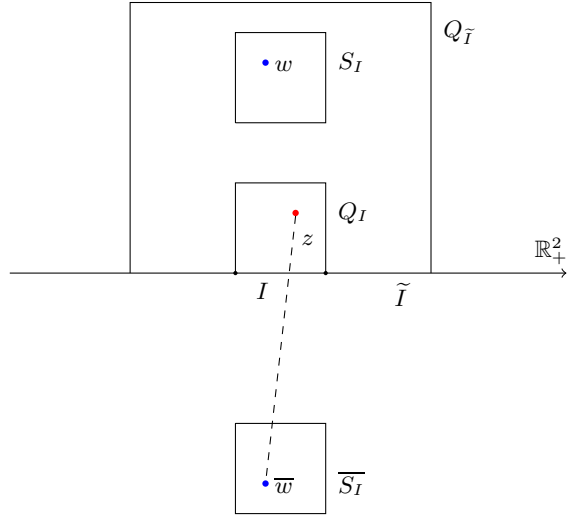
\end{center}

\vspace{-0.1in}

With these, we apply Lemma \ref{ComplexMedian} for the complex value function $b$ and $Q=S_I$. As a consequence, there exists four quadrants $T_{S_I}^j, j=1,2,3,4$ that partition $\C$,  such that 
\begin{equation} \label{20250119eq01}
A_\alpha(F_j)\geq \frac{A_\alpha(S_I)}{16}, 
\end{equation} 
where $F_j:=\{z \in S_I: b(z) \in T_{S_I}^j$\}. Moreover, let
\begin{equation} \label{20250119eq02}
B_j:=\{z\in  Q_I: b(z)  \in T_{S_I}^{j+2}\}, \quad j=1,2,3,4,
\end{equation}
where the integer $j+2$ is understood to be computed modulo $4$.
Note we may not have a uniform lower bound on $A_\alpha(B_j)$ for all $j$, but this will not matter. To this end, let further ${\bf b}_{S_I}^{cm} \in \mathbb{C}$ be the intersection of the two lines $\ell_1, \ell_2$ in Lemma \ref{ComplexMedian}, and $\theta=\theta(I)$ be the angle such that $e^{i\theta}$ rotates $\ell_1$ to $x$-axis, and $\ell_2$ to $y$-axis, respectively. This in particular gives for $z\in B_j$ and $w\in F_j$,
\begin{align} \label{20250116eq01}
    |b(z)-{\bf b}_{S_I}^{cm}|&\leq  |e^{i\theta}(b(z)-{\bf b}_{S_I}^{cm})|+|e^{i\theta}({\bf b}_{S_I}^{cm}-b(w))| \nonumber \\
    &\leq 2|e^{i\theta}(b(z)-b(w))|=2|b(z)-b(w)|.
\end{align}
Then, since $Q_I= \bigcup_{j=1}^4 B_j$, there holds 
\begin{align} \label{20250119eq23}
 &{1\over \nu(Q_I)}\int _{Q_I} |b(z)-b_{Q_I}| dA_\alpha(z) \nonumber \\
 &\leq  {2\over \nu(Q_I)}\int _{Q_I} |b(z)-{\bf b}_{S_I}^{cm}| dA_\alpha(z) \nonumber \\
 &= 2 \sum_{j=1}^{4} {1\over \nu(Q_I)}\int _{B_j} |b(z)-{\bf b}_{S_I}^{cm}| dA_\alpha(z).
\end{align}
We only need to consider the first term $j=1$, as the other terms are controlled similarly. Note that, due to \eqref{20250116eq01} and \eqref{kernelestimate}, we have
\begin{align} \label{20250116eq20}
 &  {1\over \nu(Q_I)}\int _{B_1} |b(z)-{\bf b}_{S_I}^{cm}| dA_\alpha(z) \nonumber \\
 &={1\over \nu(Q_I)}{1\over A_\alpha(F_1)}\int _{B_1}\int _{F_1} |b(z)-{\bf b}_{S_I}^{cm}|dA_\alpha(w) dA_\alpha(z) \nonumber \\
 &\leq{1\over\nu(Q_I)}{1\over A_\alpha(F_1)}\int _{B_1}\int _{F_1} |b(z)-b(w)|dA_\alpha(w) dA_\alpha(z) \nonumber \\
 &\lesssim{1\over \nu(Q_I)}\int _{B_1}\int _{F_1}{ |b(z)-b(w)|\over |z-\overline{w}|^{2+\alpha} }dA_\alpha(w) dA_\alpha(z).
\end{align}
Now the idea is to write the double integral in \eqref{20250116eq20} as an integral for the commutator $[b, P_\alpha]$. For this purpose, we need some technical steps to remove the absolute values in \eqref{20250116eq20}. More precisely, 

\medskip 

\noindent \underline{\textit{Step I: removing the absolute value in $|b(z)-b(w)|$.}}

\medskip

Recall that by the definition of $\theta=\theta(I)$, for any $z \in B_1$ and $w \in F_1$, 
$$
e^{i\theta} b(z) \in \left\{z=x+iy: x, y \ge 0\right\}  \quad \textrm{and} \quad e^{i\theta} b(w)\in \left\{z=x+iy: x, y \le 0\right\}, 
$$
which implies $\textrm{angle}(e^{i\theta}(b(z)-b(w))) \in \left[0, \frac{\pi}{2} \right]$, and hence
\begin{equation} \label{20250116eq30}
|b(z)-b(w)|, \; \im \left( e^{i\left(\theta-\frac{\pi}{4} \right)}(b(z)-b(w)) \right) \lesssim \re \left( e^{i\left(\theta-\frac{\pi}{4} \right)}(b(z)-b(w)) \right). 
\end{equation} 

\medskip 

\noindent \underline{\textit{Step II: removing the absolute value in $\frac{1}{|z-\overline{w}|^{2+\alpha}}$.}}

\medskip

Denote $z_{Q_I}$ and $w_{S_I}$ be the center of $Q_I$ and $S_I$, respectively, and let $\theta_1\in[0,2\pi)$ such that ${e^{i\theta_1} \over (z_{Q_I}-\overline{w_{S_I}})^{2+\alpha}}$ is positive, i.e.,
$$ 
{1 \over |z_{Q_I}-\overline{w_{S_I}}|^{2+\alpha}} = {e^{i\theta_1} \over (z_{Q_I}-\overline{w_{S_I}})^{2+\alpha}}. 
$$
Therefore, for every $z\in B_1$ and $w\in F_1$,
\begin{align*}
    \bigg|{e^{i\theta_1} \over (z-\overline{w})^{2+\alpha}}-{e^{i\theta_1} \over (z_{Q_I}-\overline{w_{S_I}})^{2+\alpha}}\bigg|
    & =\bigg|{1 \over (z-\overline{w})^{2+\alpha}}-{1 \over (z_{Q_I}-\overline{w_{S_I}})^{2+\alpha}}\bigg|\\
    &\lesssim {|z-z_{Q_I}| \over |z_{Q_I}-\overline{w_{S_I}}|^{3+\alpha}}+
    {|w-w_{S_I}| \over |z_{Q_I}-\overline{w_{S_I}}|^{3+\alpha}}\lesssim
    { |I| \over (\frakA|I|)^{3+\alpha}}\\
    &\lesssim { 1 \over \frakA}{\frac{1}{|z_{Q_I}-\overline{w_{S_I}}|^{2+\alpha}}}\\
    &={1\over \frakA}{e^{i\theta_1} \over (z_{Q_I}-\overline{w_{S_I}})^{2+\alpha}}, 
\end{align*}
which implies that 
\begin{align*}
    \bigg(1-{1\over \frakA}\bigg){e^{i\theta_1} \over (z_{Q_I}-\overline{w_{S_I}})^{2+\alpha}}\lesssim  \re\bigg({e^{i\theta_1} \over (z-\overline{w})^{2+\alpha}}\bigg)\lesssim \bigg(1+{1\over \frakA}\bigg){e^{i\theta_1} \over (z_{Q_I}-\overline{w_{S_I}})^{2+\alpha}}
\end{align*}
and
\begin{align*}
     \bigg|\im\bigg({e^{i\theta_1} \over (z-\overline{w})^{2+\alpha}}\bigg)\bigg|
    &\lesssim {1\over \frakA} \cdot  {e^{i\theta_1} \over (z_{Q_I}-\overline{w_{S_I}})^{2+\alpha}}.
\end{align*}
By choosing $\mathcal A$ large enough, we see that for every $z\in S_1$ and $w\in F_1$, there holds
\begin{equation} \label{20250116eq32}
\bigg|\im\bigg({e^{i\theta_1} \over (z-\overline{w})^{2+\alpha}}\bigg)\bigg|\leq \frac{2}{\frakA}  \cdot \re\bigg({e^{i\theta_1} \over (z-\overline{w})^{2+\alpha}}\bigg)
\end{equation} 
and 
\begin{equation} \label{20250116eq31}
    \bigg|{e^{i\theta_1} \over (z-\overline{w})^{2+\alpha}}\bigg|\lesssim  \re\bigg({e^{i\theta_1} \over (z-\overline{w})^{2+\alpha}}\bigg).
\end{equation} 

\medskip 

\noindent Therefore, by \eqref{20250116eq30}, \eqref{20250116eq31}, and \eqref{20250116eq20}, we have 
\begin{align} \label{20250119eq25}
& {1\over \nu(Q_I)}\int _{B_1} |b(z)-{\bf b}_{S_I}^{cm}| dA_\alpha(z) \lesssim {1\over \nu(Q_I)}\int _{B_1}\int _{F_1}{ |b(z)-b(w)|\over |z-\overline{w}|^{2+\alpha} }dA_\alpha(w) dA_\alpha(z) \nonumber \\
& \lesssim  {1\over \nu(Q_I)}\int _{B_1}\int _{F_1} \re \left( e^{i\left(\theta-\frac{\pi}{4} \right)}(b(z)-b(w))\right) \cdot \re\bigg( {e^{i\theta_1}\over (z-\overline{w})^{2+\alpha} }\bigg)dA_\alpha(w) dA_\alpha(z) \nonumber \\
&\lesssim {1\over \nu(Q_I)}\int _{B_1}\int _{F_1} \re\Bigg( \left( e^{i\left(\theta-\frac{\pi}{4} \right)}(b(z)-b(w)) \right) \cdot\bigg( {e^{i\theta_1}\over (z-\overline{w})^{2+\alpha} }\bigg)\Bigg)dA_\alpha(w) dA_\alpha(z) \nonumber \\
&\lesssim {1\over \nu(Q_I)} \int _{B_1}\bigg|\int _{F_1}  e^{i\left(\theta-\frac{\pi}{4} \right)}(b(z)-b(w)) \cdot\bigg( {e^{i\theta_1}\over (z-\overline{w})^{2+\alpha} }\bigg) dA_\alpha(w)\bigg| dA_\alpha(z) \nonumber\\
&= {1\over \nu(Q_I)} \int _{B_1}\bigg|\int _{F_1} ( b(z)-b(w))\cdot\bigg( {1\over (z-\overline{w})^{2+\alpha} }\bigg) dA_\alpha(w)\bigg| dA_\alpha(z) \nonumber \\
&={1\over \nu(Q_I)}\int _{B_1}\bigg|\int _{\R_+^2} ( b(z)-b(w))\cdot\bigg( {1\over (z-\overline{w})^{2+\alpha} }\bigg)\chi_{F_1}(w)dA_\alpha(w)\bigg| dA_\alpha(z) \nonumber \\
&= {1\over \nu(Q_I)}\int _{B_1}\bigg| [b,P_\alpha](\chi_{F_1})(z)\bigg| dA_\alpha(z)\\
 &={1\over \nu(Q_I)}\int _{B_1}\bigg| [b,P_\alpha](\chi_{F_1})(z)\bigg| \lambda^{1\over2}(z)\lambda^{-{1\over2}}(z)dA_\alpha(z) \nonumber\\
 &\leq {1\over \nu(Q_I)}\Bigg(\int _{B_1}\bigg| [b,P_\alpha](\chi_{F_1})(z)\bigg|^2 \lambda(z)dA_\alpha(z)\Bigg)^{1\over2} 
 \Bigg( \int _{B_1}\lambda^{-1}(z)dA_\alpha(z)\Bigg)^{1\over2} \nonumber \\
 &\lesssim {1\over \nu(Q_I)} \cdot \|[b, P_\alpha]\|_{L_\alpha^2 \left(\R_{+}^2, \mu \right) \mapsto L_\alpha^2 \left(\R_{+}^2, \lambda \right) } \Big\|\chi_{F_1} \Big\|_{L^2_\alpha(\R_+^2, \mu)}
\big(\lambda^{-1}(B_1)\big)^{1\over2} \nonumber \\
 &= {1\over \nu(Q_I)} \cdot \mu(F_1)^{1\over2}
\big(\lambda^{-1}(B_1)\big)^{1\over2}  \|[b, P_\alpha]\|_{L_\alpha^2 \left(\R_{+}^2, \mu \right) \mapsto L_\alpha^2 \left(\R_{+}^2, \lambda \right) } \nonumber \\
& \lesssim {1\over \nu(Q_{\widetilde{I}})} \cdot \mu(Q_{\widetilde{I}})^{1\over2}
\big(\lambda^{-1}(Q_{\widetilde{I}})\big)^{1\over2}  \|[b, P_\alpha]\|_{L_\alpha^2 \left(\R_{+}^2, \mu \right) \mapsto L_\alpha^2 \left(\R_{+}^2, \lambda \right) } \nonumber \\
&\simeq \|[b, P_\alpha]\|_{L_\alpha^2 \left(\R_{+}^2, \mu \right) \mapsto L_\alpha^2 \left(\R_{+}^2, \lambda \right) }. \nonumber
\end{align}
Here, in the second estimate above, we have used the fact that 
$$
\im \left( e^{i\left(\theta-\frac{\pi}{4} \right)}(b(z)-b(w) \right) \cdot \im \bigg( {e^{i\theta_1}\over (z-\overline{w})^{2+\alpha} }\bigg)  \le \frac{2}{\mathcal A} \re \left( e^{i\left(\theta-\frac{\pi}{4} \right)}(b(z)-b(w) \right) \cdot \re \bigg( {e^{i\theta_1}\over (z-\overline{w})^{2+\alpha} }\bigg)
$$
as $\arg  e^{i\left(\theta-\frac{\pi}{4} \right)}(b(z)-b(w)) \in \left[-\frac{\pi}{4}, \frac{\pi}{4} \right]$;  while in the second last estimate, we used the estimate: for any $I \in \calD$, 
\begin{equation} \label{20250117eq10}
 {1\over \nu(Q_I)} \cdot \left[\mu(Q_I)\right]^{1\over2}
\left[\lambda^{-1}(Q_I)\right]^{1\over2} \simeq 1. 
\end{equation} 
Indeed, recall that $\nu:=\mu^{\frac{1}{2}}\lambda^{-\frac{1}{2}}$. On one hand side, 
$$
\nu(Q_I)=\int_{Q_I} \nu(z)dA_\alpha(z)=\int_{Q_I} \mu^{\frac{1}{2}}(z)\lambda^{-\frac{1}{2}}(z)dA_\alpha(z) \le   \left[\mu\left(Q_I \right)\right]^{\frac{1}{2}} \left[\lambda^{-1} \left(Q_I\right) \right]^{\frac{1}{2}}, 
$$
which implies 
$$
{1\over \nu(Q_I)} \cdot \left[\mu(Q_I)\right]^{1\over2}
\left[\lambda^{-1}(Q_I)\right]^{1\over2} \gtrsim 1.
$$
On the other hand side, since $\nu \in \calB_2$, there holds
$$
\frac{\nu(Q_I)\nu^{-1}(Q_I)}{A^2_\alpha(Q_I)} \simeq 1,
$$
and hence by Cauchy-Schwarz, 
$$
\frac{1}{\nu (Q_I)} \simeq \frac{\nu^{-1}(Q_I)}{A_\alpha^2 (Q_I)} \le \frac{\left[\mu^{-1}\left(Q_I \right)\right]^{\frac{1}{2}} \left[\lambda \left(Q_I\right) \right]^{\frac{1}{2}}}{A_\alpha^2 (Q_I)} \simeq \frac{1}{\left[\mu\left(Q_I \right) \right]^{\frac{1}{2}} \left[\lambda^{-1} \left(Q_I\right) \right]^{\frac{1}{2}}},
$$
and this gives the second estimate 
$$
{1\over \nu(Q_I)} \cdot \left[\mu(Q_I) \right]^{1\over2}
\left[\lambda^{-1}(Q_I)\right]^{1\over2} \lesssim 1.
$$
Hence, the estimate \eqref{20250117eq10} is proved. 

\vspace{0.1in}

Finally, arguing similarly, one can verify that for $j=2, 3, 4$, 
\begin{align*}
 {1\over \nu(Q_I)}\int _{B_j} |b(z)-{\bf b}_{S_I}^{cm}| dA_\alpha(z) &\lesssim \|[b, P_\alpha]\|_{L_\alpha^2 \left(\R_{+}^2, \mu \right) \mapsto L_\alpha^2 \left(\R_{+}^2, \lambda \right)},
\end{align*}
which concludes
\begin{align*}
{1\over \nu(Q_I)}\int _{Q_I} |b(z)-b_{Q_I}| dA_\alpha(z)&\lesssim \|[b, P_\alpha]\|_{L_\alpha^2 \left(\R_{+}^2, \mu \right) \mapsto L_\alpha^2 \left(\R_{+}^2, \lambda \right) }.
\end{align*}

The proof is complete for $[b, P_\alpha]$. To this end, we turn to the necessary part for the boundedness of the commutator $[b, P_\alpha^{+}]$, which is indeed simpler and follows line by line from the above argument. The only modification, since the kernel of $P_\alpha^{+}$ is now $\frac{1}{|z-\overline{w}|^{\alpha+2}}$, is that one does \emph{not} need to remove the absolute values in the kernel (see, \textit{Step II} above) and can prove
$$
{1\over \nu(Q_I)}\int_{B_1}\int _{F_1}{ |b(z)-b(w)|\over |z-\overline{w}|^{2+\alpha} }dA_\alpha(w) dA_\alpha(z) \lesssim {1\over \nu(Q_I)}\int _{B_1}\bigg| [b,P^+_\alpha](\chi_{F_1})(z)\bigg| dA_\alpha(z)
$$
simply via \eqref{20250116eq30}. Hence, we omit the details here.

\section{Proof of Theorem \ref{mainthm002}}\label{s:3}

Here, we only prove Theorem \ref{mainthm002} for the commutator $[b, P_\alpha]$; while a straightforward modification will assert the compactness results for $[b, P_\alpha^{+}]$, and hence we would like to leave the details to the interested reader. We divide our proofs into two parts.  

\vspace{-0.1in}

\subsection{Sufficiency}
This is seen as follows.  Suppose $ b \in \textnormal{VMOA}_{\nu }(\mathbb R_{+}^2)$ with $\|b\|_{\textnormal{BMOA}_{\nu }(\R_+^2)}=1$. 
We will show that for any fixed $ 0< \epsilon < 1$, we have the decomposition 
\begin{equation} \label{20250118eq01} 
[b,P_\alpha] =  A_\epsilon+B_\epsilon,
\end{equation} 
where the operator norm of $A_\epsilon$ acting boundedly from $L_\alpha^2 \left(\R_{+}^2, \mu \right)$ to $L_\alpha^2 \left(\R_{+}^2, \lambda \right) $ is at most $ \epsilon$, and $ B$ is compact. Therefore, $[b,P_\alpha]$ is a limit in the operator norm of compact operators acting from   $L_\alpha^2 \left(\R_{+}^2, \mu \right)$ to $L_\alpha^2 \left(\R_{+}^2, \lambda \right) $, which implies $[b,P_\alpha]$ is itself compact.

We will follow an argument similar to \cite{LaceyLi2022}. Let $\eta>0$ be small. Let $\phi:[0, \infty) \rightarrow [0,1]$ be a compactly supported, smooth bump function supported on $[0, \eta]$ and satisfying $\phi \equiv 1$ on $[0, \frac{\eta}{2}].$ Recall that the operator $P_\alpha$ has integral kernel $K_\alpha(z,w)=  \frac{i^{2+\alpha}}{(z-\overline{w})^2}$, which we will now decompose in an appropriate way to reflect the VMO structure. Set $ K_\alpha = \sum_{t=0} ^{3} K_\alpha^t$, where  
\begin{gather*}
 K_\alpha^0 (z,w)  = 
\phi(|z-\overline{w}|) \left(1-\phi\left(\frac{1}{|z|+|w|}\right)\right) K_\alpha(z,w);
\\
 K_\alpha^1 (z,w)  =  
\phi(|z-\overline{w}|)\, \phi\left(\frac{1}{|z|+|w|}\right) K_\alpha(z,w);
 \\
  K_\alpha^2(z,w)  =  
(1-\phi(|z-\overline{w}|)\,\phi\left(\frac{1}{|z|+|w|}\right) K_\alpha(z,w);
 \\
K_\alpha^3(z,w)= K_\alpha(z,w)- \sum_{t=0}^2 K_\alpha^t(z,w)= (1-\phi(|z-\overline{w}|))\left(1-\phi\left(\frac{1}{|z|+|w|}\right)\right) K_\alpha(z,w).
\end{gather*}
Write $ P_\alpha^t$ for the operator associated to the kernel $ K_\alpha^t$, so that $$[b, P_\alpha]= \sum_{t=0}^{3} [b, P_\alpha^t].$$
We shall see that $[b, P_\alpha^3]$ and $\sum_{t=0}^2 [b, P_\alpha^t]$ will play the role of $A_\epsilon$ and $B_\epsilon$, respectively. 

\medskip 

\noindent Now, let us turn to the details. We first claim that 
\begin{equation*}
\left\| [b,P_\alpha^t] \right\| _{L^{2} (\R_+^2, \mu ) \mapsto L^{2} (\R_+^2, \lambda)} \lesssim \epsilon _ \eta , \qquad t=0,1,2,  
\end{equation*}
where $ \epsilon _ \eta $ decreases to zero as $ \eta $ does. For the cases $t=0,1$ notice that $K_\alpha^t(z,w) \neq 0$ only if $|z-\overline{w}| \leq \eta$, which geometrically means that the points $z$ and $w$ are both close to each other and close to the real axis. Therefore, we run the dyadic domination proof leading to the estimate in Corollary \ref{SparseCom}, it is rather easy to see we can achieve an estimate of the form 
\begin{align*} 
|[b,P_\alpha^t] f(z)|&  \lesssim  \sum_{j=1,2}\sum_{\substack{I \in \calD^j: \\ |I| < \delta_\eta}} \left( \frac{1}{A_\alpha(Q_I)} \int_{Q_I} |b-b_{Q_I}| |f| \, dA_\alpha \right) \mathbf{1}_{Q_I}(z) \\
& \quad  +  \sum_{j=1,2}\sum_{\substack{I \in \calD^j: \\ |I| < \delta_\eta}} |b(z)-b_{Q_I}| \left( \frac{1}{A_\alpha(Q_I)}  \int_{Q_I} |f| \, dA_\alpha \right) \mathbf{1}_{Q_I}(z),
\end{align*}
where $\delta_\eta$ decreases to $0$ as $\eta \rightarrow 0.$ By the ${ \rm VMOA}_\nu(\R_+^2)$ condition, if $\delta_\eta$ is small enough, we can guarantee ${1\over \nu(Q_I)}\int _{Q_I} |b(z)-b_{Q_I}|dA_\alpha(z)$ is as small as we wish for all $|I|\leq \delta_\eta$. Then by applying the proof of the sufficient part of Theorem \ref{mainthm001}, we can achieve 
$$ \lVert [b,P_\alpha^t] \rVert _{L ^{2} (\mu, \mathbb{R}_{+}^2 ) \mapsto L ^{2} (\lambda, \mathbb{R}_{+}^2 )} \lesssim \epsilon _ \eta.$$
A very similar argument works for $[b, P_\alpha^2]$, except this time the summation in the positive dyadic operator will only involve Carleson tents whose centers are far from the origin (this includes ``large'' Carleson tents as well). 

It remains for us to argue that  $  [b,P_\alpha^3]   $ is a compact operator.  First, notice that the integral kernel associated with $[b,P_\alpha^3]$ is
$$
C_3(z,w):=\left[b(z)-b(w) \right] K_\alpha^3(z,w).
$$ 
We remark that it is easy to see that $C_3(z,w)$ is compactly supported on $\mathcal{K}_\eta:= \{(z,w) \in \R_{+}^2 \times \R_{+}^2:|z|, |w|\leq  \frac{2}{\eta}\}$ and that there exists $c_\eta>0$ so that
$$ 
|C_3(z,w)| \leq c_\eta |b(z)-b(w)|, \quad z,w \in \mathcal{K}_\eta.
$$
Indeed, $K_\alpha^3(z,w)$ is supported on the smaller set $\mathcal{K}_\eta \cap \{(z,w) \in \R_{+}^2 \times \R_{+}^2:|z-\overline{w}|> \eta/2\}.$ Note that there exists an interval $I_\eta \in \calD_1 \cup \calD_2$ so that $\mathcal{K}_\eta \subseteq Q_{I_\eta}$ with comparable size (this choice of interval is certainly not unique, and depends on $\eta$). 

Now we can argue that $[b, P_\alpha^3]$ is compact. Note that the operator
$[b, P_\alpha^3]$ is compact as an operator from $L^2(\mu, \R_{+}^2)$  to $ L^2(\lambda,\R_{+}^2)$ if and only if
$ \lambda^{1\over 2}[b,P_\alpha^3] \mu^{-{1\over 2}}$ is compact as an operator from $L^2_\alpha(\R_{+}^2)$ to itself. We will now argue that $ \lambda^{1\over 2}[b,P_\alpha^3] \mu^{-{1\over 2}}$ is Hilbert--Schmidt using the integral kernel characterization, which of course will imply compactness. We estimate:
\begin{align*}
& \int_{\R_{+}^2} \int_{\R_{+}^2} \lambda(z) |C_3(z,w)|^2 \mu^{-1}(w) \, dA_\alpha(w) dA_\alpha(z)\leq c_{\eta}^2 \int_{Q_{I_\eta}} \int_{Q_{I_\eta}} \lambda(z) |b(z)-b(w)|^2 \mu^{-1}(w) \, dA_\alpha(w) dA_\alpha(z) \\
& \lesssim \mu^{-1}(Q_\eta) \int_{Q_{I_\eta}}  \lambda(z) |b(z)-b_{Q_\eta}|^2  \, dA_\alpha(z) + \lambda(Q_\eta) \int_{Q_{I_\eta}} |b(w)-b_{Q_\eta}|^2 \mu^{-1}(w)  \, dA_\alpha(w).
\end{align*}
Now, using Lemma \ref{OscLemma}, we have, assuming $I_\eta \in \calD^j$:

\begin{align*}
\int_{Q_{I_\eta}}  \lambda(z) |b(z)-b_{Q_\eta}|^2  \, dA_\alpha(z) & \lesssim \int_{Q_{I_\eta}} \lambda(z)
\left| \sum_{J \in \calD^j, \; J \subseteq I_\eta} \left( \frac{1}{A_\alpha(Q_J)} \int_{Q_J} |b-b_{Q_J}| \, d A_\alpha \right) \mathbf{1}_{Q_J}(z) \right|^2 dA_\alpha(z) \\
& \leq \int_{Q_{I_\eta}} \lambda(z)
\left| \sum_{J \in \calD^j, \; J \subseteq I_\eta} \frac{\nu(Q_J)}{A_\alpha(Q_J)} \mathbf{1}_{Q_J}(z) \right|^2 dA_\alpha(z).
\end{align*}
By a localized version of the standard weighted estimate for the sparse form, the above display can be controlled by a constant times
$$ 
\int_{Q_{I_\eta}} \nu^2 \lambda \, dA_\alpha= \mu(Q_{I_\eta}).
$$ 
A similar argument for the other term gives 
$$ 
\int_{Q_{I_\eta}} |b(w)-b_{Q_\eta}|^2 \mu^{-1}(w)  \, dA_\alpha(w) \lesssim \lambda^{-1}(Q_{I_\eta}).
$$
Therefore, we have shown:
\begin{align*}
\int_{\R_{+}^2} \int_{\R_{+}^2} \lambda(z) |C_2(z,w)|^2 \mu^{-1}(w) \, dA_\alpha(w) dA_\alpha(z)  & \lesssim \mu(Q_{I_\eta}) \mu^{-1}(Q_{I_\eta})+ \lambda(Q_{I_\eta}) \lambda^{-1}(Q_{I_\eta})\\
& \leq ([\mu]_{\calB_2}+ [\lambda]_{\calB_2}) A_\alpha(Q_{I_\eta})^2\\[3pt]
&< \infty, 
\end{align*}
which proves the Hilbert--Schmidt claim.

\subsection{Necessity}

We prove this by contradiction. Hence, there exists some $b\in \textrm{BMOA}_{\nu }^\alpha(\mathbb R_{+}^2) \backslash \textrm{VMOA} _{\nu }^\alpha(\mathbb R_{+}^2)$ such that $[b,P_\alpha]$ is compact from $L_\alpha^2 \left(\R_{+}^2, \mu \right)$ to $L_\alpha^2 \left(\R_{+}^2, \lambda \right)$.

Since $b\not\in \textnormal{VMOA}_{\nu }^\alpha(\mathbb R_{+}^2)$, then at least one of the two conditions in Definition \ref{def-VMOA} does not hold.  
We present the case where the first condition in Definition~\ref{def-VMOA} fails, that is, there exist $\delta_0>0$ and a sequence of Carleson boxes $\{Q_{I_j}\}_{j=1}^\infty=\{Q_{I_j}(c_{I_j},|I_j|)\}_{j=1}^\infty \subset \mathbb R^2_+$ such that $|I_j|\to 0$ as $j\to\infty$ and that
\begin{equation}\label{(i) not hold}
 {1\over \nu(Q_{I_j})}\int _{Q_{I_j}} |b(z)-b_{Q_{I_j}}|dA_\alpha(z)\geq\delta_0.
\end{equation}
Without loss of generality, we can further assume that 
\begin{align}\label{ratio1}
50|I_{j+1}|\leq  |I_{j}|.
\end{align}
Now, for each such fixed Carleson tent $Q_{I_j}$, we choose the other test set to be $S_{I_j}$ where
$$ 
S_{I_j} :=\left\{z=x+iy \in \R_{+}^2: x \in I_j, y \in (\frakA |I_j|, (\frakA+1) |I_j|) \right\} 
$$
and as before $\frakA$ is some large positive constant. Then, as in \eqref{kernelestimate}, one has for $z \in Q_{I_j}$ and $w \in S_{I_j}$,
$$ 
\frac{1}{|z-\overline{w}|^{2+\alpha}} \simeq_\frakA \frac{1}{A_\alpha(Q_{I_j})} \simeq_\frakA \frac{1}{A_\alpha(S_{I_j})}.
$$
Following the technique in the proof of the necessary part of Theorem \ref{mainthm001}, for each $Q_{I_j}$ and $S_{I_j}$, by choosing $\frakA$ large enough, we have the corresponding complex complex median ${\bf b}_{S_{I_j}}^{cm}$ and the sets $F_{j, k}$ and $B_{j, k}$ for $k=1,2,3,4$  (see, \eqref{20250116eq01}, \eqref{20250119eq01} and \eqref{20250119eq02}, respectively).

Now consider 
$$
\widetilde F_{j,k}:= F_{j,k}\backslash \bigcup_{\ell=j+1}^\infty S_{I_\ell}\qquad {\rm for}\ j=1,2,\ldots
$$
and for $k=1,2,3,4$. Then, by \eqref{ratio1}, we obtain that
for each $j$,
\begin{align}\label{Fj1}
A_\alpha(\widetilde F_{j,k}) &\geq A_\alpha(F_{j,k})- A_\alpha \left(\bigcup_{\ell=j+1}^\infty S_{I_\ell} \right) \geq
{1\over 16} A_\alpha(S_{I_j})-\sum_{\ell=j+1}^\infty A_\alpha \left(  S_{I_\ell} \right) \nonumber \\
& \geq {1\over 16} A_\alpha(S_{I_j})- {1\over 48}A_\alpha(S_{I_j})= {1\over 24} A_\alpha(S_{I_j}).
\end{align}

Now for each $j$, arguing as in \eqref{20250119eq23}, we have that
\begin{align*}
&{1\over \nu(Q_{I_j})}\int _{Q_{I_j}} |b(z)-b_{Q_{I_j}}|dA_\alpha(z)\\
& \leq  {2\over \nu(Q_{I_j})}\int _{Q_{I_j}} |b(z)-{\bf b}_{S_{I_j}}^{cm}| dA_\alpha(z)
\\
&\leq \sum_{k=1}^{4}
 {2\over \nu(Q_{I_j})}\int _{B_{j,k}} |b(z)-{\bf b}_{S_{I_j}}^{cm}| dA_\alpha(z), 
\end{align*}
which together with \eqref{(i) not hold} gives as least one of the following inequalities holds:
\begin{align*}
{2\over \nu(Q_{I_j})}\int _{B_{j,k}} |b(z)-{\bf b}_{S_{I_j}}^{cm}| dA_\alpha(z) \geq {\delta_0\over4},\qquad k=1,2,3,4.
\end{align*}
Without loss of generality, we now assume that $k=1$ holds, i.e., 
\begin{align*}
{2\over \nu(Q_{I_j})}\int _{B_{j,1}} |b(z)-{\bf b}_{S_{I_j}}^{cm}| dA_\alpha(z)  \geq {\delta_0\over4}.
\end{align*}
Therefore, for each $j$, following the argument in deriving \eqref{20250119eq25} with $F_1$ there replacing by $\widetilde{F}_{j, 1}$ (which is guaranteed by \eqref{Fj1}) and using \eqref{20250117eq10}, we obtain that 
\begin{align*}
{\delta_0\over4}&\leq {2\over \nu(Q_{I_j})}\int _{B_{j,1}} \left|b(z)-{\bf b}_{S_{I_j}}^{cm}\right| dA_\alpha(z) \\
&\lesssim 
 {1\over \nu(Q_{I_j})}\int _{B_{j,1}}\bigg| [b,P_\alpha](\chi_{\widetilde{F}_{j,1}})(z)\bigg| dA_\alpha(z)
 \\
&\lesssim {1\over \left[\mu(B_{j,1})\right]^{1\over 2}   \left[\lambda^{-1}(B_{j,1}) \right]^{1\over 2}}  \int_{B_{j,1}}\left|[b, P_\alpha](\chi_{\widetilde F_{j,1}})(z)\right| dA_\alpha(z)\\
&={1\over  \left[\lambda^{-1}(B_{j,1})\right]^{1\over 2}}  \int_{B_{j,1}}\left|[b, P_\alpha]\bigg({\chi_{\widetilde F_{j,1}} \over \left[\mu(B_{j,1})\right]^{1\over 2}}\bigg)(z)\right| dA_\alpha(z).
\end{align*}
Next, by H\"older's inequality, we further have
\begin{align}\label{uniform lower bound}
\delta_0 &\lesssim {1\over \left[\lambda^{-1}(B_{j,1})\right]^{1\over 2}}  \int_{B_{j,1}}\left|[b, P_\alpha]\big(f_j\big)(z)\right|  \lambda^{1\over 2}(z) \lambda^{-{1\over 2}}(z) dA_\alpha(z)\\
&\lesssim
{1\over  \left[\lambda^{-1}(B_{j,1}) \right]^{1\over 2}}  \left[\lambda^{-1}(B_{j,1})\right]^{1\over 2} \bigg( \int_{B_{j,1}}\big|[b, P_\alpha](f_j)(z)\big|^2 \lambda(z) dA_\alpha(z)\bigg)^{1\over 2}\nonumber\\
&\lesssim
\bigg( \int_{B_{j,1}}\big|[b, P_\alpha](f_j)(z)\big|^2 \lambda(z) dA_\alpha(z)\bigg)^{1\over 2},\nonumber
\end{align}
where in the above inequalities, we denote
$$ f_j(z) := {\chi_{\widetilde F_{j,1}}(z) \over \left[\mu(B_{j,1})\right]^{1\over 2}}.$$
This is a sequence of functions with disjoint supports, by \eqref{Fj1}, with $ \|f_j\|_{L_\alpha^2 \left(\R_{+}^2, \mu \right)} \simeq  1 $. 
Moreover, it is clear that this sequence $\{f_j\}_j$ is uniformly bounded in $L_\alpha^2 \left(\R_{+}^2, \mu \right)$.

Returning to the assumption of compactness of 
$[b,P_\alpha]$, 
the sequence  $  \{[b, P_\alpha](f_j)\}_j$ has a  
convergent subsequence, and without loss of generality, we assume that there is $\phi \in  L_\alpha^2 \left(\R_{+}^2, \lambda \right)$
such that 
$$
[b, P_\alpha](f_j) \rightarrow \phi \quad \textnormal{in} \ L_\alpha^2 \left(\R_{+}^2, \lambda \right).
$$
Moreover, from \eqref{uniform lower bound}, we see that 
$$
\|[b, P_\alpha](f_j)\|_{L_\alpha^2 \left(\R_{+}^2, \lambda \right)}\gtrsim \delta_0,
$$
which implies that 
 $ \lVert \phi \rVert _{L_\alpha^2 \left(\R_{+}^2, \lambda \right)} \gtrsim 1$.  We now choose a subsequence $\{ j_i\}_i$ so that 
\begin{equation} \label{20250120eq40} 
\|\phi- [b, P_\alpha](f_{j_i})\|_{L_\alpha^2 \left(\R_{+}^2, \lambda \right)} \leq  2^{-i}. 
\end{equation}
To get the desired contradiction, we let $ \psi:= \sum_{i} \frac{f_{j_i}}{i}$.
It is easy to see that 
\begin{align*}
    \|\psi\|_{L_\alpha^2 \left(\R_{+}^2, \mu \right)}&=
    \bigg(\int_{\mathbb R_+^2} \bigg|\sum_{i} \frac{f_{j_i}(z)}{i} \bigg|^2\mu(z)dA_\alpha(z)\bigg)^{1\over2}\\
    &=
    \bigg(\int_{\mathbb R_+^2} \sum_{i} \bigg| \frac{f_{j_i}(z)}{i}\bigg|^2\mu(z)dA_\alpha(z)\bigg)^{1\over2}\\
    &=\bigg(\sum_{i} \frac{1}{i^2} \int_{\mathbb R_+^2}  \big|f _{j_i}(z)\big|^2\mu(z)dA_\alpha(z)\bigg)^{1\over2}\\ &\lesssim 1, 
\end{align*}
which shows that $\psi$ is in $L_\alpha^2 \left(\R_{+}^2, \mu \right)$, and hence 
\begin{equation} \label{20250120eq11}
\left\| [b, P_\alpha] \psi \right\|_{L_\alpha^2 \left(\R_+^2, \lambda \right)} \lesssim 1. 
\end{equation} 
Here we use the facts that all $f_{j}$
have disjoint compact supports and that
$ \|f_j\|_{L_\alpha^2 \left(\R_{+}^2, \mu \right)} \simeq  1$. On the other hand side, by \eqref{20250120eq40}, one has
\begin{equation} \label{20250120eq12}
\bigg\| \sum_{i=1}^\infty \frac{1}{i} \bigl ( \phi - [b, P_\alpha](f_{j_i}\bigr) \bigg\|_{L_\alpha^2 \left(\R_{+}^2, \lambda \right)}
\lesssim  \left[   \sum_{i} \|\phi- [b, P_\alpha](f_{j_i}) \| _{L_\alpha^2 \left(\R_{+}^2, \lambda \right)} ^{2}  \right] ^{1/2} \lesssim 1. 
\end{equation} 
Therefore, combining \eqref{20250120eq11} and \eqref{20250120eq12} yields for any $N \in \N$
$$\phi_N:=\sum_{i=1}^N  \frac{\phi}{i}=[b, P_\alpha] \left( \sum_{i=1}^N \frac{f_{j_i}}{i} \right) +\sum_{i=1}^N \frac{1}{i} \left( \phi - [b, P_\alpha](f_{j_i}) \right) \in L_\alpha^2 \left(\R_+^2, \lambda \right),
$$  
with an uniform upper bound $\sup_{N \ge 1} \left\|\phi_N\right\|_{L_\alpha^2(\R_+^2, \lambda)} \lesssim 1$. However, it is clear that $\phi_N$ has a $L^2(\R_+^2, \lambda)$ norm at least of the magnitude $\log N$ since $ \lVert \phi \rVert _{L_\alpha^2 \left(\R_{+}^2, \lambda \right)} \gtrsim 1$, which gives the desired contradiction. 

\bigskip
\section{The Case of Non-Analytic Symbol} \label{s:4}
In this section, we will prove the boundedness theorem for the general (non-analytic) symbol $b$ under the one-weight setting. We will also provide an example to show that the $\mathcal{APR}$ condition on the weight $\sigma$ cannot be removed in this case, in contrast to the analytic case.

Since our methods will be slightly different from the approach in proving the Bloom-type result Theorem \ref{mainthm001}, we need to introduce some new function spaces. All these definitions can be found in \cite{Si2022} where they are given for the Siegel upper-half space, but they are even simpler for $\R_{+}^2.$ We recall $\beta(z,r)$ denotes a disk of center $z$ and radius $r$ in the Bergman metric and $d_\beta(z,w)$ denotes the Bergman distance between the points $z$ and $w.$

\begin{defn}
Let $b \in L^2_{\rm{loc}}(\R_{+}^2)$ and $r>0.$ We say 
\begin{enumerate}
    \item [$\bullet$] $b\in \textnormal{BMO}_r^2(\R_+^2)$  if
$$
\|b\|_{\textnormal{BMO}_r^2}:= \sup_{z \in \R_{+}^2} \left(\frac{1}{A(\beta(z,r))} \int_{\beta(z,r)} |b-  b_{\beta(z,r)}|^2 \, dA \right)^{1/2}< \infty. 
$$
\item [$\bullet$] $b \in \textnormal{BO}_r(\R_+^2)$  if $b$ is continuous and 
$$
\|b\|_{\textnormal{BO}_r}:= \sup_{z \in \R_{+}^2} \sup_{w \in \beta(z,r)} |b(z)-b(w)|< \infty. 
$$
\item [$\bullet$] $b \in \textnormal{BA}_r(\R_+^2)$  if
$$
\|b\|_{\textnormal{BA}_r}:= \sup_{z \in \R_{+}^2} \left(\frac{1}{A(\beta(z,r))} \int_{\beta(z,r)} |b|^2 \, dA \right)^{1/2}< \infty. 
$$
\end{enumerate}
\end{defn}

It is well-known that the spaces $\textnormal{BMO}_r^2(\R_{+}^2)$, $\textnormal{BO}_r(\R_{+}^2)$, and  $\textnormal{BA}_r(\R_{+}^2)$ are independent of the radius $r$. Therefore, we will omit the $r$ from the notation in the rest of the paper (take $r=1$ for simplicity). These spaces can also be alternatively characterized using the Berezin transform. Moreover, one can check that this definition of $\textnormal{BMO}^2_r(\R_+^2)$ agrees with Definition \ref{def-BMO} for any fixed $r$; that is $\textnormal{BMO}^2_r(\R_{+}^2)= \textnormal{BMO}^2(\R_{+}^2)$ for all $r$ with equivalent norms. We refer the reader, for example, \cite[Section 5]{HHLPW2024} and \cite{Si2022}, for more detailed information about the above claims. 

\vspace{0.1cm}

The following propositions can be found in \cite{Si2022}:

\begin{prop}\label{splitting}
Any function $b \in \textnormal{BMO}^2(\R_{+}^2)$ has a decomposition $b=b_1+b_2$ with $b_1 \in \textnormal{BO}(\R_{+}^2)$, $b_2 \in \textnormal{BA}(\R_{+}^2)$, and $\|b\|_{\textnormal{BMO}^2} \simeq\|b_1\|_{\textnormal{BO}} + \|b_2\|_{\textnormal{BA}}.$

\end{prop}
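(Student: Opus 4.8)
The plan is to realize the decomposition explicitly by taking $b_1$ to be the local average of $b$ over Bergman disks and setting $b_2 = b - b_1$. Fixing the radius $r$ (say $r=1$, which is harmless since all three spaces and their norms are independent of $r$), I would define
$$
b_1(z) := \frac{1}{A(\beta(z,r))}\int_{\beta(z,r)} b(w)\, dA(w), \qquad b_2 := b - b_1.
$$
Two features of this choice drive the argument: first, $b_1(z)$ is \emph{exactly} the average $b_{\beta(z,r)}$ that appears in the definition of $\textnormal{BMO}^2$; and second, $b_1$ is continuous, since $b \in \textnormal{BMO}^2 \subset L^1_{loc}(\R_{+}^2)$ and $z \mapsto \beta(z,r)$ varies continuously. (An equally valid alternative would be to take $b_1$ to be the Berezin transform of $b$, which is even smooth, but the hard average over Bergman disks keeps the comparisons elementary.)

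The first and key estimate is that $b_1 \in \textnormal{BO}(\R_{+}^2)$ with $\|b_1\|_{\textnormal{BO}} \lesssim \|b\|_{\textnormal{BMO}^2}$. For $w \in \beta(z,r)$ both $\beta(z,r)$ and $\beta(w,r)$ are contained in $\beta(z,2r)$, and using the doubling of $dA$ along Bergman disks together with the comparability $A(\beta(z,r)) \simeq A(\beta(w,r)) \simeq A(\beta(z,2r))$ of nearby and dilated disks, I would compare each of $b_1(z)$ and $b_1(w)$ to the single average $b_{\beta(z,2r)}$:
$$
|b_1(z) - b_{\beta(z,2r)}|,\ |b_1(w) - b_{\beta(z,2r)}| \lesssim \frac{1}{A(\beta(z,2r))}\int_{\beta(z,2r)}|b-b_{\beta(z,2r)}|\,dA \le \|b\|_{\textnormal{BMO}^2_{2r}} = \|b\|_{\textnormal{BMO}^2},
$$
where the last step uses Cauchy--Schwarz and the radius-independence of the $\textnormal{BMO}^2$ norm. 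The triangle inequality then controls $|b_1(z)-b_1(w)|$.

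Next I would show $b_2 \in \textnormal{BA}(\R_{+}^2)$ with $\|b_2\|_{\textnormal{BA}} \lesssim \|b\|_{\textnormal{BMO}^2}$. Since $b_1(z) = b_{\beta(z,r)}$ by construction, splitting and applying the triangle inequality gives
$$
\frac{1}{A(\beta(z,r))}\int_{\beta(z,r)}|b_2|^2\,dA \lesssim \frac{1}{A(\beta(z,r))}\int_{\beta(z,r)}|b-b_{\beta(z,r)}|^2\,dA + \frac{1}{A(\beta(z,r))}\int_{\beta(z,r)}|b_1(z)-b_1(w)|^2\,dA(w),
$$
where the first term is $\le \|b\|_{\textnormal{BMO}^2}^2$ directly and the second is $\lesssim \|b_1\|_{\textnormal{BO}}^2 \lesssim \|b\|_{\textnormal{BMO}^2}^2$ by the bound just proved. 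Finally, the reverse inequality $\|b\|_{\textnormal{BMO}^2} \lesssim \|b_1\|_{\textnormal{BO}} + \|b_2\|_{\textnormal{BA}}$ (which also shows that \emph{any} such decomposition recovers the norm) is elementary: the $b_1$-oscillation over $\beta(z,r)$ is pointwise bounded by $2\|b_1\|_{\textnormal{BO}}$, while $\big(\tfrac{1}{A}\int_{\beta}|b_2-(b_2)_\beta|^2\big)^{1/2} \le 2\big(\tfrac{1}{A}\int_\beta|b_2|^2\big)^{1/2} \le 2\|b_2\|_{\textnormal{BA}}$.

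The main obstacle is the first estimate, namely controlling the oscillation of the averaging function $b_1$; this is where the geometry of the Bergman (hyperbolic) metric on $\R_{+}^2$ enters, through the doubling of Lebesgue measure along Bergman disks, the comparability of volumes of nearby and dilated disks, and crucially the radius-independence of $\textnormal{BMO}^2$ used to pass freely between radius $r$ and $2r$. These facts are standard for $\R_{+}^2$ and are recorded in the cited references, so once they are in hand the remaining steps are routine bookkeeping.
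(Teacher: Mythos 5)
Your proposal is correct and matches the paper's approach: the paper cites \cite{Si2022} for this proposition but explicitly notes (in the proof of Theorem \ref{mainthm001dropanalytic}) that the splitting is accomplished by taking $b_1(z)$ to be the average of $b$ over the Bergman disk $\beta(z,r)$ and $b_2 = b - b_1$, which is exactly your construction. Your verification of the three estimates — the oscillation bound for $b_1$ via comparison with $b_{\beta(z,2r)}$, the $\textnormal{BA}$ bound for $b_2$, and the elementary reverse inequality — is the standard argument (as in Zhu's book for the disk) and is sound, relying only on the radius-independence and tent/disk equivalence of $\textnormal{BMO}^2$ already recorded in the paper.
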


\begin{prop}\label{Lip}
Let $b \in \textnormal{BO}(\R_{+}^2)$. Then the following Lipschitz-type estimate holds for all $z,w \in \R_{+}^2$:
$$
|b(z)-b(w)|\leq \|b\|_{\textnormal{BO}} \cdot (1+d_\beta(z,w)).
$$
\end{prop}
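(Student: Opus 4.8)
The plan is to exploit that the Bergman metric on $\R_+^2$ is a geodesic (length) metric space, so that any two points can be joined by a path realizing their distance, and then to convert the local oscillation control furnished by the $\textnormal{BO}$ norm into a global Lipschitz-type bound by a chaining argument along such a path. Recall that with the normalization $r=1$ the definition gives $|b(z)-b(w)| \le \|b\|_{\textnormal{BO}}$ whenever $d_\beta(z,w)\le 1$; the content of the proposition is precisely to iterate this inequality along a geodesic.

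Concretely, first I would fix $z,w \in \R_+^2$ and set $N:=\lceil d_\beta(z,w)\rceil$, the smallest integer that is at least $d_\beta(z,w)$. Since the hyperbolic metric on the upper half-plane (to which the Bergman metric is a constant multiple) is geodesic, I would choose a unit-speed geodesic $\gamma$ joining $z$ to $w$ and sample it at equally spaced parameters, producing points $z=z_0,z_1,\dots,z_N=w$ with $d_\beta(z_{k-1},z_k)=d_\beta(z,w)/N$ for each $k$. The key point is that this common spacing satisfies $d_\beta(z,w)/N \le 1$ by the choice of $N$, so each consecutive pair lies within a unit Bergman ball, i.e.\ $z_k \in \beta(z_{k-1},1)$.

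With the chain in place, the estimate follows from the triangle inequality and the definition of the $\textnormal{BO}$ norm:
$$
|b(z)-b(w)| \le \sum_{k=1}^{N} |b(z_{k-1})-b(z_k)| \le N\,\|b\|_{\textnormal{BO}}.
$$
It then remains only to observe that $N=\lceil d_\beta(z,w)\rceil \le 1+d_\beta(z,w)$, which yields the claimed bound.

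I expect the only genuine subtlety to be the geometric input that the Bergman metric is a geodesic space realizing distances by arc length, so that the intermediate sample points $z_k$ satisfy $d_\beta(z_{k-1},z_k)$ \emph{equal} to the arc-length increment rather than merely bounded by it. For the upper half-plane this is classical, since the Bergman metric is proportional to the Poincar\'e metric, whose geodesics are explicit (vertical lines and semicircles orthogonal to the boundary); alternatively, one can bypass geodesics entirely by using only the length-space property, choosing for each $\eps>0$ a path from $z$ to $w$ of length at most $d_\beta(z,w)+\eps$, subdividing it into $N$ subarcs of equal length $\le 1$, and letting $\eps\to 0$ at the end. Either route makes the chaining step rigorous.
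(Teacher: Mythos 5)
Your proof is correct, and it is essentially the canonical argument: the paper itself offers no proof of this proposition (it defers to \cite{Si2022}), and the chaining argument you describe---subdividing a Bergman geodesic into $N=\lceil d_\beta(z,w)\rceil\leq 1+d_\beta(z,w)$ steps of length at most $1$ and summing the $\textnormal{BO}$ oscillation bound over consecutive sample points---is precisely the standard proof in that reference. The only micro-detail is the case where $d_\beta(z,w)$ is a positive integer, so that consecutive sample points lie on the boundary of the unit Bergman balls rather than in their interior; this is harmless because $b$ is continuous by the definition of $\textnormal{BO}$ (so the oscillation bound passes to closed balls), or one can simply take $N=\lfloor d_\beta(z,w)\rfloor+1$ instead, which still satisfies $N\leq 1+d_\beta(z,w)$.
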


The following useful estimate for the Bergman metric, valid for each $\varepsilon>0$, follows from a straightforward calculation (see, e.g., \cite{Si2022}):
$$
|d_\beta(z,w)| \leq C_{\varepsilon} \left(\frac {(\im z)^{-\varepsilon} (\im w)^{-\varepsilon}}{|z-\overline{w}|^{2-2 \varepsilon}} \right), \quad z, w \in \R_{+}^2.
$$

We will also need the following fact about a Whitney-type decomposition of the upper-half space, which is straightforward to prove (see, \cite[Theorem 2.5]{HH2021} for a more general statement of this kind of result).

\begin{prop} \label{diskslattice}
There exists an $R>0 $ and points $c_I \in Q_I^{up}$ so that $Q_I^{up} \subset \beta(c_I,R)$ for all intervals $I \in \mathcal{D}$, and moreover $A(Q_I^{up})| \sim A(\beta(c_I,R)).$ Moreover, the ``dyadic'' disks $\beta(c_{I},2R)$ have finite overlap for intervals in a fixed dyadic lattice, $I\in \calD_j.$ . 

\end{prop}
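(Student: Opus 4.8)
The plan is to exploit the fact that the Bergman metric on $\R_{+}^2$ is comparable to the hyperbolic metric $ds^2 = (dx^2+dy^2)/y^2$, and in particular that the orientation-preserving affine maps preserving $\R_{+}^2$ are isometries for $d_\beta$. Concretely, for each interval $I$ let $\tau_I(z) := |I|\, z + t_I$, where $t_I$ is the left endpoint of $I$ and $I_0 := [0,1]$; a direct computation shows $\tau_I$ is an isometry of $d_\beta$ with $\tau_I(Q_{I_0}) = Q_I$, $\tau_I(Q_{I_0}^{up}) = Q_I^{up}$, and $\tau_I(\beta(z,r)) = \beta(\tau_I(z), r)$. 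Since every interval arises as $\tau_I(I_0)$, it suffices to verify the three assertions for the single reference box $Q_{I_0}^{up}$ and transport them by $\tau_I$. Accordingly I would choose $c_I := \tau_I(c_{I_0})$, where $c_{I_0} = \tfrac12 + \tfrac34 i$ is the Euclidean center of $Q_{I_0}^{up}$; note $c_I \in Q_I^{up}$ and $\im c_I \simeq |I|$.

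For the containment, the reference box $Q_{I_0}^{up} = \{x+iy:\ 0<x<1,\ \tfrac12<y<1\}$ is a compact subset of $\R_{+}^2$ bounded away from the real axis, so $R := \sup_{z \in Q_{I_0}^{up}} d_\beta(c_{I_0}, z)$ is finite and $Q_{I_0}^{up} \subset \beta(c_{I_0}, R)$. Applying the isometry $\tau_I$ yields $Q_I^{up} \subset \beta(c_I, R)$ for every $I$ with the same constant $R$, which simultaneously fixes $R$ and proves the first claim. For the area comparison, I note $A(Q_I^{up}) = |I|^2/2$, while Lebesgue area scales by $|I|^2$ under $\tau_I$; hence $A(\beta(c_I,R)) = |I|^2\, A(\beta(c_{I_0},R))$, and since a Bergman ball of finite radius is a relatively compact Euclidean disk the base quantity $A(\beta(c_{I_0},R))$ is a fixed positive finite constant. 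Therefore both areas are comparable to $|I|^2$ and $A(Q_I^{up}) \simeq A(\beta(c_I,R))$.

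The only step requiring genuine care is the finite overlap, and this is what I would flag as the main obstacle. Fix a lattice $\calD^j$ and a point $z \in \R_{+}^2$, and suppose $z \in \beta(c_I, 2R)$. Because $d_\beta$ is comparable to the hyperbolic distance, the bound $d_\beta(z,c_I) \le 2R$ forces $\im z / \im c_I$ to lie in a fixed interval depending only on $R$; combined with $\im c_I \simeq |I|$ this gives $\im z \simeq |I|$, confining $|I|$ to at most $O_R(1)$ dyadic generations of $\calD^j$. Within any fixed generation the intervals are pairwise disjoint of common length $\ell$, their centers are $\ell$-separated horizontally, and each ball $\beta(c_I, 2R)$ has Euclidean diameter $\simeq \ell$; hence only boundedly many such balls can contain the fixed point $z$. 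Summing this uniform per-generation bound over the $O_R(1)$ relevant generations yields the desired finite overlap. Thus, once the isometry normalization is in place, the whole argument reduces to the hyperbolic height comparison that limits the number of active scales, followed by an elementary disjointness count within each scale.
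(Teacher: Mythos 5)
Your proof is correct, but it is worth noting that the paper itself gives no argument at all for this proposition: it simply declares the statement ``straightforward to prove'' and cites the more general Whitney-type decomposition in \cite[Theorem 2.5]{HH2021}, which is formulated for strictly pseudoconvex domains. Your route is the natural self-contained one for the upper half-plane: you exploit the fact that the affine maps $\tau_I(z)=|I|z+t_I$ act transitively on the family of Carleson boxes and are isometries of the Bergman (hyperbolic) metric, so everything reduces to a single compactness argument on the reference box $Q_{I_0}^{up}$, plus the two standard hyperbolic facts used for the overlap count (the height comparison $|\log(\im z/\im w)|\lesssim d_\beta(z,w)$, which pins down $O_R(1)$ dyadic generations, and the Euclidean size $\simeq_R \ell$ of a radius-$2R$ Bergman ball at height $\ell$, which bounds the count within a generation against the $\ell$-separation of the centers). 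What your approach buys is an elementary, fully explicit proof with transparent constants; what the paper's citation buys is generality -- the same statement is needed on the unit ball in Section 5, where there is no transitive group of affine Bergman isometries and one must instead run uniform estimates on the Bergman metric, which is exactly what \cite{HH2021} provides. Two cosmetic points: since you define $R$ as a supremum, you should enlarge it slightly (say replace $R$ by $R+1$) to place $Q_{I_0}^{up}$ inside the \emph{open} ball, and the constant relating the Bergman metric to the hyperbolic metric $ds^2=(dx^2+dy^2)/y^2$ should be absorbed into $R$ once at the start; neither affects the argument.
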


\begin{prop}\label{Carleson} Let $b \in \textnormal{BA}.$ Then $|b|^2 \, dA_\alpha$ is a Carleson measure for the Bergman space $A^2_\alpha(\R_+^2)$. 
\end{prop}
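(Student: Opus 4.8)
The plan is to verify the standard geometric (box) characterization of Carleson measures: a positive measure $d\mu=|b|^2\,dA_\alpha$ is Carleson for $A^2_\alpha(\R_+^2)$ if and only if it satisfies the uniform box condition
\[
\sup_{I\subseteq\R}\frac{\mu(Q_I)}{A_\alpha(Q_I)}<\infty,
\]
or, equivalently, $\sup_{z\in\R_+^2}\mu(\beta(z,r))/A_\alpha(\beta(z,r))<\infty$ for a fixed $r$. This equivalence is classical for weighted Bergman spaces and will be cited (see, e.g., the references in \cite{Si2022, HHLPW2024}); granting it, the whole task reduces to extracting this geometric estimate from the $\textnormal{BA}$ condition.

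The key observation is that on a fixed Bergman disk $\beta(z,r)$ the imaginary part $\im w$ is comparable to $\im z$ with constants depending only on $r$, because the Bergman metric on $\R_+^2$ is comparable to the hyperbolic metric. Hence $dA_\alpha(w)=\frac{1}{\pi}(\alpha+1)(2\im w)^\alpha\,dA(w)\simeq_r(\im z)^\alpha\,dA(w)$ throughout $\beta(z,r)$, which gives simultaneously $\int_{\beta(z,r)}|b|^2\,dA_\alpha\simeq(\im z)^\alpha\int_{\beta(z,r)}|b|^2\,dA$ and $A_\alpha(\beta(z,r))\simeq(\im z)^\alpha A(\beta(z,r))$. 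Combining these with the defining bound $\int_{\beta(z,r)}|b|^2\,dA\le\|b\|_{\textnormal{BA}}^2\,A(\beta(z,r))$ yields
\[
\int_{\beta(z,r)}|b|^2\,dA_\alpha\lesssim\|b\|_{\textnormal{BA}}^2\,A_\alpha(\beta(z,r)),
\]
uniformly in $z$, which is precisely the disk form of the Carleson condition. If one prefers to argue directly over Carleson boxes, I would pass from disks to boxes using Proposition \ref{diskslattice}: writing $Q_I=\bigcup_{J\subseteq I}Q_J^{up}$ as a disjoint union over dyadic $J\subseteq I$ in a fixed lattice, and using $Q_J^{up}\subset\beta(c_J,R)$ together with $A_\alpha(\beta(c_J,R))\simeq A_\alpha(Q_J^{up})$ (again valid since $\im$ is essentially constant on each Whitney piece), one sums the disk estimates and invokes disjointness:
\[
\int_{Q_I}|b|^2\,dA_\alpha\le\sum_{J\subseteq I}\int_{\beta(c_J,R)}|b|^2\,dA_\alpha\lesssim\|b\|_{\textnormal{BA}}^2\sum_{J\subseteq I}A_\alpha(Q_J^{up})=\|b\|_{\textnormal{BA}}^2\,A_\alpha(Q_I).
\]

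The substantive content beyond routine measure comparison is the equivalence between the geometric condition and the actual Carleson embedding $\int|f|^2|b|^2\,dA_\alpha\lesssim\|f\|_{A^2_\alpha}^2$, which I would quote rather than reprove. I expect the main obstacle to be purely bookkeeping: one must keep $r$ (respectively $R$) fixed throughout so that all implicit constants in the comparisons $dA_\alpha\simeq(\im z)^\alpha\,dA$ and $A_\alpha(\beta(c_J,R))\simeq A_\alpha(Q_J^{up})$ are uniform in $z$ and $J$, and one uses the finite-overlap/comparability guaranteed by Proposition \ref{diskslattice} to ensure the summation over $J\subseteq I$ telescopes cleanly to $A_\alpha(Q_I)$.
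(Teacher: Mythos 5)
Your proposal is correct, but it takes a genuinely different route from the paper. You reduce the statement to the geometric (box/disk) condition $\sup_{z}\mu(\beta(z,r))/A_\alpha(\beta(z,r))<\infty$ and then cite the classical equivalence between that condition and the Carleson embedding; your verification of the geometric condition from the $\textnormal{BA}$ definition (via $\im w\simeq_r\im z$ on $\beta(z,r)$, hence $dA_\alpha\simeq_r(\im z)^\alpha\,dA$ there, and the Whitney summation $Q_I=\bigcup_{J\subseteq I}Q_J^{up}$) is routine and sound. The paper instead proves the embedding $\int|f|^2|b|^2\,dA_\alpha\lesssim\|b\|_{\textnormal{BA}}\|f\|^2_{A^2_\alpha}$ directly and self-containedly: it decomposes $\R_+^2$ into the pieces $Q_I^{up}$, applies the mean value property and Jensen's inequality to $f$ on disks $\beta(w,R)\subset\beta(c_I,2R)$, uses Fubini to peel off the $\textnormal{BA}$ average of $|b|^2$ on $\beta(c_I,2R)$, and concludes with the finite-overlap property of the dyadic disks from Proposition \ref{diskslattice}. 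The trade-off is essentially modularity versus self-containment: your argument is shorter but outsources the analytic core to a cited black box, and the sufficiency direction of that cited equivalence is proved in the literature by exactly the mean-value-plus-finite-overlap argument the paper writes out; conversely, the paper's direct proof avoids any external dependence and sets up the dyadic-disk machinery that is reused later (e.g., in the weighted Carleson estimate inside the proof of Theorem \ref{mainthm001dropanalytic}). One small caution for your route: you should make sure the cited characterization is stated for the weighted Bergman space on the \emph{unbounded} upper half-plane (as in the Siegel upper half-space literature you point to), not only for the disk or ball, since that is the setting actually needed here.
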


 \begin{proof}
 Take $f \in A^2_\alpha(\R_+^2)$ and fix a dyadic lattice $\calD$ on $\R$. It is easy to see that for each $w \in Q_I^{up}$, the disk $\beta(w,R)$ is contained in $\beta(c_I, 2 R) \supset Q_I^{up}$ and $A(\beta(c_I, 2 R)) \simeq A(Q_I^{up})$.  We estimate, using the mean value property for $f$, Proposition \ref{diskslattice}, Jensen's inequality, Fubini, and the finite overlap condition:
\begin{align*}
\int_{\R_{+}^2} |f(w)|^2 |b(w)|^2 \, dA_\alpha(w) & = \sum_{I \in \calD} \int_{Q_I^{up}} \left|\frac{1}{A(\beta(w, R))} \int_{\beta(w, R)} f(\zeta) \, dA (\zeta)\right |^2 |b(w)|^2 \, dA_\alpha(w)\\
& \lesssim  \sum_{I \in \calD} \int_{\beta(c_I, 2 R)} \frac{1}{A(\beta(c_I, 2R))} \int_{\beta(c_I, 2R)} |f(\zeta)|^2 |b(w)|^2 \, dA(\zeta) dA_\alpha(w) \\
& = \sum_{I \in \calD} \int_{\beta(c_I, 2 R)} \left(\frac{1}{A(\beta(c_I, 2R))} \int_{\beta(c_I, 2R)} |b(w)|^2 \, dA(w)\right)  |f(\zeta)|^2  dA_\alpha(\zeta)  \\
& \leq \|b\|_{\textnormal{BA}_{2R}} \cdot  \sum_{I \in \calD} \int_{\beta(c_I, 2 R)}  |f(\zeta)|^2  dA_\alpha(\zeta)\\
& \lesssim \|b\|_{\textnormal{BA}_{2R}} \cdot \|f\|^2_{A^2_\alpha(\R_+^2)}.
\end{align*}
The proof is complete.
\end{proof}

Finally, we define the $\calB_\infty(\R_+^2)$ class. 
\begin{defn}
We say a weight $\sigma$ belongs to $\calB_\infty(\R_+^2)$ if
$$ 
[\sigma]_{\calB_\infty}:=\sup_{I \in \calD} \frac{1}{\sigma(Q_I)} \int_{Q_I} M_{\calD}(\sigma \mathbf{1}_{Q_I}) \, dA_\alpha< \infty.$$ 
\end{defn}

\subsection{Boundedness Results} 

Given a measurable function $ \varphi$ on $L^2_\alpha(\R_{+}^2)$, define the \emph{Hankel operator}
$$ 
H_\varphi f:= (I-P_\alpha)M_\varphi P_\alpha f,  \quad f \in L^2_\alpha(\R_{+}^2).
$$
Here, $M_\varphi$ denotes the \emph{multiplication operator} on $L^2_\alpha(\R_{+}^2)$ with symbol $\varphi.$ Note that $H_\varphi$ is densely defined for bounded, compactly supported $f$ and $\varphi \in {\rm BMO}^2(\R_+^2)$. To prove Theorem \ref{mainthm001dropanalytic}, we will make use of a well-known link between Hankel operators and commutators.  

\begin{proof}[Proof of Theorem \ref{mainthm001dropanalytic}]
First, note that $b \in {\rm BMO}^2$ implies $\overline{b} \in {\rm BMO}^2$, and the Hankel operator $H_{\overline{b}}$ is bounded on $L^2_\alpha(\R_{+}^2, \sigma)$ if and only the adjoint operator $H_{\overline{b}}^*$ is bounded on $L^2_\alpha(\R_{+}^2, \sigma^{-1})$ . Here, the adjoint is computed with respect to the standard (unweighted) $L^2_\alpha$ pairing. Moreover, on $L^2_\alpha$ we have the operator identity
$$ 
[b, P_\alpha]= H_b -H_{\overline{b}}^*.
$$
Finally, $\sigma \in \mathcal{B}_2 \cap \mathcal{APR}$ if and only if $\sigma^{-1} \in \mathcal{B}_2 \cap \mathcal{APR}$,
so to prove $[b,P_\alpha]$ extends to a bounded operator on $L^2_\alpha(\R_{+}^2, \sigma)$ for any $b \in {\rm BMO }_2$, $\sigma \in \calB_2 \cap\mathcal{APR} $, it is enough to prove $H_b$ extends to a bounded operator on $L^2_\alpha(\R_{+}^2, \sigma)$  under identical assumptions. 

As in the unweighted proof in the case of the unit disk in \cite{Zhu07}, we split the symbol $b=b_1+b_2$ according to Proposition \ref{splitting}, where $b_1 \in \textnormal{BO}(\R_+^2)$ and $b_2 \in \textnormal{BA}(\R_+^2),$ and write $H_b= H_{b_1}+ H_{b_2}.$ We note that this splitting can be accomplished by setting $b_2=b-b_1$, where
$$
b_1(z):= \frac{1}{A(\beta(z,r))} \int_{\beta(z,r)} b(w) \, dA(w).
$$
We handle the boundedness of each term separately.

For $H_{b_1}$, we note that by the structure of $b_1$, it is clear that the crucial oscillation lemma 
(Lemma \ref{OscLemma}) will hold true, even though $b_1$ is not holomorphic. That is the only time we made use of the holomorphic condition on $b_1$ (specifically, the mean-value property) in the proof of Theorem \ref{mainthm001}. Therefore, one could actually follow the proof of Theorem \ref{mainthm001} in the special case $\mu=\lambda= \sigma$ to obtain the desired bound for $H_{b_1}.$  We present a totally alternative proof here to bound $H_{b_1}$, which we believe may be of independent interest. This proof involves generalizations of Bergman-type operators, which were studied using similar dyadic methods in the case of the unit ball in \cite{RTW2017}.

Let $f \in L^2_\alpha(\R_{+}^2, \sigma).$ Then $P_\alpha f \in L^2_\alpha(\R_{+}^2, \sigma),$ so in particular $g:=P_\alpha f$ is holomorphic on the upper-half plane. Using this fact, the hypothesis that $b_1 \in \textnormal{BO}(\R_+^2)$ as well as Proposition \ref{Lip}, and the remark thereafter:
\begin{align*}
|H_{b_1} f(z)| & = \left| \int_{\R_{+}^2} \frac{(b_1(z)-b_1(w)) g(w)}{(z-\overline{w})^{2+\alpha}} \, dA_\alpha(w) \right|\\
& \leq \|b_1\|_{\textnormal{BO}}   \cdot  \int_{\R_{+}^2} \frac{(d_\beta(z,w)+1) |g(w)|}{|z-\overline{w}|^{2+\alpha} }\, dA_\alpha(w)\\
& \leq  \|b_1\|_{\textnormal{BO}} \cdot  \left( C_\varepsilon \int_{\R_{+}^2} \frac{ (\im z)^{-\varepsilon} (\im w)^{-\varepsilon} |g(w)|}{|z-\overline{w}|^{2+\alpha-2 \varepsilon}} \, dA_\alpha(w) + \int_{\R_{+}^2} \frac{ |g(w)|}{|z-\overline{w}|^{2+\alpha}} \, dA_\alpha(w) \right)\\
& =: \|b_1\|_{\textnormal{BO}} \cdot \left( C_\varepsilon \mathcal{Q}_\alpha^\varepsilon g(z) + P_\alpha^{+}g(z) \right).
\end{align*}
 Taking $L^2_\alpha(\R_{+}^2,\sigma)$ norms, we obtain  
$$ 
\|H_{b} f\|_{L^2_\alpha(\R_{+}^2,\sigma)} \lesssim \|b_1\|_{\textnormal{BO}} \cdot  \left(\|Q^{\varepsilon}_{\alpha}g\|_{L^2_\alpha(\R_{+}^2,\sigma)}+ \|P_\alpha^{+} g\|_{L^2_\alpha(\R_{+}^2,\sigma)}\right).
$$
By \cite[Theorem 1]{RTW2017}, it is clear that $\|P_\alpha^+g\|_{L^2_\alpha(\R_{+}^2,\sigma)} \lesssim [\sigma]_{\calB_2}^2 \|f\|_{L^2_\alpha(\R_{+}^2,\sigma)}$  from our assumption that $\sigma \in \calB_2$. Therefore, it suffices to prove $$ \|Q^{\varepsilon}_{\alpha}g\|_{L^2_\alpha(\R_{+}^2,\sigma)} \leq C_{\sigma,\varepsilon} \|f\|_{L^2_\alpha(\R_{+}^2,\sigma)}$$ for an appropriately small choice of $\varepsilon>0.$

For notational convenience, define the function $\rho_\varepsilon(w)= (\im\,w)^{-\varepsilon}, w \in \R_{+}^2$. Following the same strategy of proof of Proposition \ref{BergSparse}, it is straightforward to show one can achieve the domination
\begin{equation} \label{ModSparse}
\mathcal{Q}_\alpha^\varepsilon g(z) \lesssim \sum_{j=1,2} \sum_{I \in \mathcal{D}^j} \left(\frac{1}{A_{\alpha-2 \varepsilon}(Q_I)} \int_{Q_I} g \, \rho_\varepsilon \, dA_\alpha \right) \rho_\varepsilon(z) \mathbf{1}_{Q_I}(z). 
\end{equation}
Suppose we show the $\calB_2$-like condition
\begin{equation} \label{FurtherWeightedB2}
C_{\sigma,\varepsilon}:= \sup_{I \in \calD} \langle \rho_\varepsilon \sigma \rangle_{Q_I}^{dA_{\alpha-\varepsilon}} \langle \rho_\varepsilon^{-1} \sigma^{-1} \rangle_{Q_I}^{dA_{\alpha-3\varepsilon}} <\infty.
\end{equation}
Then we can estimate, using \eqref{ModSparse} for any $h \in L^2_\alpha(\R_{+}^2,\sigma)$ of norm $1$: 

\begin{align*}
| \langle \mathcal{Q}_\alpha^\varepsilon g, \sigma h \rangle_{dA_\alpha}| & \lesssim \sum_{j=1,2} \sum_{I \in \calD^j} \langle g \rho_\varepsilon \rangle_{Q_I}^{dA_\alpha} \langle h\rho_\varepsilon \sigma \rangle_{Q_I}^{dA_\alpha} A_{\alpha+2 \varepsilon}(Q_I) \\
& = \sum_{j=1,2} \sum_{I \in \calD^j} \langle g \sigma \rho_\varepsilon^{-1}  \rangle_{Q_I}^{\rho_\varepsilon^{-1}\sigma^{-1} dA_{\alpha-3 \varepsilon}} \langle h \rho_{\varepsilon}^{-1}  \rangle_{Q_I}^{\rho_\varepsilon \sigma dA_{\alpha-\varepsilon}} A_{\alpha-2 \varepsilon}(Q_I)  \left(  \langle\rho_\varepsilon^{-1}\sigma^{-1} \rangle_{Q_I}^{ dA_{\alpha-3 \varepsilon}} \langle \rho_\varepsilon \sigma   \rangle_{Q_I}^{dA_{\alpha-\varepsilon}} \right)\\
& \leq C_{\sigma, \varepsilon} \sum_{j=1,2} \sum_{I \in \calD^j} \langle g \sigma \rho_\varepsilon^{-1}  \rangle_{Q_I}^{\sigma^{-1} dA_{\alpha-2 \varepsilon}} \langle h \rho_{\varepsilon}^{-1}  \rangle_{Q_I}^{\sigma dA_{\alpha-2\varepsilon}} A_{\alpha-2 \varepsilon}(Q_I)  ,
\end{align*}
and this expression can be controlled by $\|g\|_{L^2_\alpha(\R_{+}^2, \sigma)}$ via Cauchy--Schwarz and a standard maximal function argument which we omit. The proof is thus complete up to the verification of \eqref{FurtherWeightedB2}.

 \medskip 

Note that since $\sigma \in \calB_2 \cap \mathcal{APR}$, both $\sigma$ and $\sigma^{-1}$ satisfy a reverse H\"{o}lder inequality (see,  \cite{APR2019}). We estimate each factor of expression \eqref{FurtherWeightedB2} in turn, letting $\tau_\sigma>1$ being an exponent such that $\sigma, \sigma^{-1}$ satisfy a reverse H\"{o}lder inequality with respect to $\tau_\sigma$ and applying H\"{o}lder's inequality with exponents $\tau_\sigma$ and $\tau_\sigma'$ (assume $\varepsilon$ is chosen small enough so $\varepsilon \tau_{\sigma}'<1$):
\begin{align*}
\langle \rho_\varepsilon \sigma \rangle_{Q_I}^{dA_{\alpha-\varepsilon}} & \simeq \frac{1}{(\im\,c_I)^{2+\alpha- \varepsilon}} \int_{Q_I} \sigma(w)(\im\, w)^{-2 \varepsilon} \, dA_\alpha(w) \\
& \leq \left(\frac{1}{(\im\,c_I)^{2+\alpha}} \int_{Q_I} \sigma ^{\tau_\sigma} \, dA_\alpha \right)^{1/ \tau_\sigma} \times \left(\frac{1}{(\im\,c_I)^{2+\alpha- \varepsilon \tau_\sigma'}} \int_{Q_I} (\im\,w)^{- 2\varepsilon \tau_\sigma'} \, dA_\alpha(w)    \right)^{1/ \tau_\sigma'}\\
& \lesssim  \left(\frac{1}{(\im\,c_I)^{2+\alpha}} \int_{Q_I} \sigma ^{\tau_\sigma} \, dA_\alpha \right)^{1/ \tau_\sigma} \times \left( \frac{\left(\im c_I \right)^{2+\alpha-2\epsilon \tau_\sigma'}}{(\im c_I)^{2+\alpha-\epsilon \tau_\sigma'}} \right)^{1/\eta_\sigma'} \\ 
& \lesssim [\sigma]_{\rm{RH}_{\tau_\sigma}} \langle \sigma\rangle_{Q_I} (\im\, c_I)^{-\varepsilon}.
\end{align*}
On the other hand, 
\begin{align*}
\langle \rho_\varepsilon^{-1} \sigma^{-1} \rangle_{Q_I}^{dA_{\alpha-3\varepsilon}} & \simeq \frac{1}{(\im \, c_I)^{2+\alpha-3 \varepsilon}} \int_{Q_I}\sigma^{-1}(w) (\im \, w)^{-2 \varepsilon} \, dA_\alpha(w) \\
& \leq \left(\frac{1}{(\im \, c_I)^{2+\alpha}} \int_{Q_I} \sigma ^{-\tau_\sigma} \, dA_\alpha \right)^{1/ \tau_\sigma} \times \left(\frac{1}{(\im \, c_I)^{2+\alpha-3 \tau_\sigma'\varepsilon}} \int_{Q_I} (\im \, w)^{-2 \varepsilon \tau_\sigma'} \, dA_\alpha(w)    \right)^{1/ \tau_\sigma'}\\
& \lesssim [\sigma^{-1}]_{\rm{RH}_{\tau_\sigma}} \langle \sigma^{-1} \rangle_{Q_I} (\im\, c_I)^\varepsilon.
\end{align*}
These estimates show that \eqref{FurtherWeightedB2} is controlled by a constant times $[\sigma]_{\calB_2}$, which altogether proves the estimate 
 $$ 
 \|H_{b_1} f\|_{L^2_\alpha(\R_{+}^2,\sigma)} \lesssim [\sigma]_{\calB_2} \|g\|_{L^2_\alpha(\R_{+}^2,\sigma)} + [\sigma]_{\calB_2}^2 \|f\|_{L^2_\alpha(\R_{+}^2,\sigma)} \lesssim  [\sigma]_{\calB_2}^2 \|f\|_{L^2_\alpha(\R_{+}^2,\sigma)}.
 $$
It remains to show that $H_{b_2}$ is a bounded map on $L^2_\alpha(\R_{+}^2,\sigma)$. We have for $f \in L^2_\alpha(\R_{+}^2, \sigma)$ and $g:= P_\alpha f \in A^2_\alpha(\R_{+}^2, \sigma)$, using the fact that $P_\alpha$ is a bounded map on $L^2_\alpha(\R_{+}^2, \sigma)$ for $\sigma \in \calB_2$:
\begin{align*}
\|(I-P_\alpha) b_2 g\|_{L^2_\alpha(\R_{+}^2, \sigma)} & \lesssim (1+[\sigma]_{\calB_2}) \|b_2 g\|_{L^2_\alpha(\R_{+}^2, \sigma)}\\
& = (1+[\sigma]_{B_2}) \left(\int_{\R_{+}^2} |b_2|^2 |g|^2 \sigma \, dA_\alpha \right)^{1/2}.
\end{align*} 
Therefore, it remains to show that $|b_2|^2 \sigma dA$ is a Carleson measure on $A^2_\alpha(\R_{+}^2,\sigma)$. Note that since $b_2 \in \textnormal{BA}(\R_+^2)$, we already know that $|b_2|^2 dA_\alpha$ is a Carleson measure on the unweighted Bergman space $A^2_\alpha(\R_{+}^2)$ by Proposition \ref{Carleson}. We estimate as follows, using the $\mathcal{APR}$ condition on $\sigma$, Proposition \ref{diskslattice}, the subharmonicity of $|f|^2$, and the fact that $\im z$ is more or less constant on balls of fixed radius in the Bergman metric:
\begin{align*}
&\int_{\R_{+}^2} |b_2(w)|^2 |g(w)|^2 \sigma(w) \, dA_\alpha(w)\\
 & \simeq \sum_{I \in \calD} \sigma(c_I) \int_{Q_I^{up}} |b_2(w)|^2 |g(w)|^2  \, dA_\alpha(w)\\
& \lesssim \sum_{I \in \calD} \sigma(c_I) \left(\frac{1}{A(\beta(c_I,2R))}\int_{\beta(c_I,2R)} |g(\zeta)|^2 \, dA(\zeta)\right)\int_{Q_I^{up}} |b_2(w)|^2  \, dA_\alpha(w)\\
& \lesssim \left\|b_2\right\|^2_{\textnormal{BA}_{2R}} \cdot \sum_{I \in \calD} \sigma(c_I) \left(\frac{1}{A(\beta(c_I,2R))}\int_{\beta(c_I,2R)} |g(\zeta)|^2 \, dA(\zeta)\right) \times A_\alpha(\beta(c_I,2R)) \\
& \lesssim  \left\|b_2\right\|^2_{\textnormal{BA}_{2R}} \cdot  \sum_{I \in \calD} \sigma(c_I) \int_{\beta(c_I,2R)} |g(\zeta)|^2 \, dA_\alpha(\zeta)\\
& \simeq \left\|b_2\right\|^2_{\textnormal{BA}_{2R}} \cdot  \sum_{I \in \calD} \int_{\beta(c_I,2R)} |g(\zeta)|^2 \sigma(\zeta) \, dA_\alpha(\zeta) \\
& \lesssim \left\|b_2\right\|^2_{\textnormal{BA}_{2R}} \cdot  \|g\|_{L^2_\alpha(\R_{+}^2,\sigma)}^2\\
&\lesssim \left\|b_2\right\|^2_{\textnormal{BA}_{2R}} \cdot [\sigma]_{\calB_2}^4 \left\|f \right\|^2_{L_\alpha^2(\R_+^2, \sigma)}. 
\end{align*}
This estimate completes the proof. 
\end{proof}

For the operator $H_{b_1}$, with $b_1 \in \textnormal{BO}(\R_+^2)$, the extra assumption that $\sigma$ satisfies the $\mathcal{APR}$ condition can in fact be removed. This result is interesting in its own right, so we provide the proof below.

\begin{thm} \label{DropRH1}
Let $b\in \textnormal{BO}(\R_+^2)$ and $\sigma \in \calB_2.$ Then $H_b$ extends to a bounded operator from $L^2_\alpha(\R_{+}^2, \sigma)$ to $L^2_\alpha(\R_{+}^2, \sigma)$ and satisfies
$$
\|H_b\|_{L^2_\alpha(\R_{+}^2, \sigma) \rightarrow L^2_\alpha(\R_{+}^2, \sigma)} \lesssim_{\sigma} \|b\|_{\textnormal{BO}}. 
$$
\end{thm}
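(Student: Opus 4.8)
The plan is to avoid the $\mathcal{Q}_\alpha^\varepsilon$-argument used above for Theorem \ref{mainthm001dropanalytic} (which is exactly where the reverse H\"older inequality, and hence the $\mathcal{APR}$ hypothesis, was spent) and instead re-run the sparse-domination proof of Theorem \ref{mainthm001} in the one-weight case $\mu=\lambda=\sigma$, for which $\nu=\mu^{1/2}\lambda^{-1/2}\equiv 1$. As was already noted there, the \emph{sole} place the holomorphicity of the symbol was used in the proof of Theorem \ref{mainthm001} is the oscillation estimate Lemma \ref{OscLemma}, via the mean-value property. Thus the only substantive step is to verify that the conclusion of Lemma \ref{OscLemma} persists for $b\in\textnormal{BO}(\R_{+}^2)$.

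To this end I would keep the telescoping $|b(z)-b_{Q_I}|\le|b(z)-b_{Q_{I_{k(z)}}}|+\sum_{j=1}^{k(z)}|b_{Q_{I_j}}-b_{Q_{I_{j-1}}}|$. The consecutive-average terms are controlled as in the original proof by the triangle inequality and the doubling of $dA_\alpha$ alone, so they never saw the holomorphicity. For the finest-scale term, in place of the mean-value property I would use the Lipschitz estimate of Proposition \ref{Lip}, namely $|b(z)-b(w)|\le\|b\|_{\textnormal{BO}}(1+d_\beta(z,w))$, together with the integrability fact that for $z\in Q_{I_{k(z)}}^{up}$,
\[
\frac{1}{A_\alpha(Q_{I_{k(z)}})}\int_{Q_{I_{k(z)}}}\bigl(1+d_\beta(z,w)\bigr)\,dA_\alpha(w)\lesssim_\alpha 1 .
\]
The point is that the Bergman distance from the top of a Carleson tent down to its $\ell$-th generation grows only linearly in $\ell$, whereas the $dA_\alpha$-mass of that generation decays by a factor $\sim 2^{-(1+\alpha)}$ per generation, so the resulting series $\sum_\ell \ell\,2^{-\ell(1+\alpha)}$ converges \emph{precisely because} $\alpha>-1$. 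This gives $|b(z)-b_{Q_{I_{k(z)}}}|\lesssim_\alpha\|b\|_{\textnormal{BO}}$; the same computation yields $\textnormal{BO}\subseteq\textnormal{BMO}$ over Carleson boxes with $\|b\|_{\textnormal{BMO}}\lesssim_\alpha\|b\|_{\textnormal{BO}}$, so every local oscillation on the right-hand side of Lemma \ref{OscLemma} is $\lesssim\|b\|_{\textnormal{BO}}$. Hence Lemma \ref{OscLemma} holds verbatim for $\textnormal{BO}$ symbols.

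With Lemma \ref{OscLemma} in hand the rest is a one-weight reading of the existing machinery. For bounded, compactly supported $f$, set $g:=P_\alpha f$; since $g$ is holomorphic and thus reproduced by $P_\alpha$, one has $H_b f(z)=i^{2+\alpha}\int_{\R_{+}^2}\frac{(b(z)-b(w))g(w)}{(z-\overline{w})^{2+\alpha}}\,dA_\alpha(w)$, so that $|H_b f(z)|\le P_\alpha^{+}\!\bigl(|b(z)-b(\cdot)|\,|g|\bigr)(z)$. Applying Proposition \ref{BergSparse} and the splitting $|b(z)-b(w)|\le|b(z)-b_{Q_I}|+|b(w)-b_{Q_I}|$ exactly as in Corollary \ref{SparseCom} gives the pointwise bound $|H_b f(z)|\lesssim \mathcal{A}_b g(z)+\mathcal{A}_b^{*}g(z)$. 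Running Section \ref{20250116subsec01} verbatim with $\nu\equiv1$ --- feeding the (now justified) Lemma \ref{OscLemma} into $\mathcal{A}_b$ and using the boundedness of $\mathcal{A}_{\calD^j}$ and $M_{\calD^j}$ on $L^2_\alpha(\R_{+}^2,\sigma)$ for $\sigma\in\calB_2$, and the symmetric fact $\sigma^{-1}\in\calB_2$ for the adjoint $\mathcal{A}_b^{*}$ --- yields $\|\mathcal{A}_b g\|_{L^2_\alpha(\sigma)}+\|\mathcal{A}_b^{*}g\|_{L^2_\alpha(\sigma)}\lesssim_\sigma\|b\|_{\textnormal{BO}}\|g\|_{L^2_\alpha(\sigma)}$. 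Since $\sigma\in\calB_2$ also makes $P_\alpha$ bounded on $L^2_\alpha(\R_{+}^2,\sigma)$, we get $\|g\|_{L^2_\alpha(\sigma)}\lesssim\|f\|_{L^2_\alpha(\sigma)}$, and the asserted estimate follows after extension by density, with no reverse H\"older inequality and hence no $\mathcal{APR}$ condition invoked anywhere.

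I expect the genuine obstacle to be confined to the second paragraph: confirming that bounded oscillation on Bergman balls is an adequate substitute for the mean-value property in Lemma \ref{OscLemma}. Its entire content is the integrability estimate displayed above, in which the restriction $\alpha>-1$ is used decisively; everything downstream is the routine $\nu\equiv1$ specialization of the proof of Theorem \ref{mainthm001}.
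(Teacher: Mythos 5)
Your proposal is correct, but it takes a genuinely different route from the paper's own proof of Theorem \ref{DropRH1}. The paper stays inside the Hankel/$\mathcal{Q}_\alpha^\varepsilon$ framework of Theorem \ref{mainthm001dropanalytic}: it reduces matters to the two-factor condition \eqref{FurtherWeightedB2} and verifies it for an \emph{arbitrary} $\calB_2$ weight by introducing the dyadic regularization $\sigma_I=\sum_{J\subseteq I}\langle\sigma\rangle_{Q_J^{up}}\mathbf{1}_{Q_J^{up}}$ and proving that $\sigma_I$ satisfies a reverse H\"older inequality localized to $Q_I$ (via $M_{Q_I}\sigma_I=M_{Q_I}\sigma$, the containment $\calB_2\subset\calB_\infty$, and the argument of \cite{StockdaleWagner2023}), even though $\sigma$ itself satisfies no reverse H\"older inequality absent the $\mathcal{APR}$ condition. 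You instead bypass $\mathcal{Q}_\alpha^\varepsilon$ entirely: you extend Lemma \ref{OscLemma} to $\textnormal{BO}$ symbols using Proposition \ref{Lip} and the convergent series $\sum_\ell \ell\,2^{-\ell(1+\alpha)}$ (correctly isolating where $\alpha>-1$ enters), write $H_b=[b,P_\alpha]P_\alpha$, and run the $\nu\equiv 1$ sparse argument of Theorem \ref{mainthm001} together with the weighted bound for $P_\alpha$. This is essentially the shortcut the authors themselves flag as available in the proof of Theorem \ref{mainthm001dropanalytic} (where they observe Lemma \ref{OscLemma} survives for the $\textnormal{BO}$ part $b_1$ and that one could follow Theorem \ref{mainthm001} with $\mu=\lambda=\sigma$), but which they deliberately decline in favor of the $\mathcal{Q}_\alpha^\varepsilon$ proof ``of independent interest.'' What each approach buys: yours is shorter and more elementary given the machinery already present, avoids all reverse H\"older considerations, and makes the role of $\alpha>-1$ transparent; the paper's proof yields a structural fact about regularizations of $\calB_2$ weights and a weighted bound for the operator $\mathcal{Q}_\alpha^\varepsilon$ itself, both of which are stronger outputs than the commutator estimate alone. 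One small imprecision worth fixing: Lemma \ref{OscLemma} does not hold ``verbatim'' for $\textnormal{BO}$ symbols, since the finest-scale term is controlled by the global quantity $\|b\|_{\textnormal{BO}}$ rather than by the local oscillation over $\widetilde{Q}_{I_{k(z)}}$ (which could be much smaller); what you actually prove is the chain bound with every coefficient replaced by $C_\alpha\|b\|_{\textnormal{BO}}$, and since the $\nu\equiv 1$ sufficiency argument only consumes the bound $\mathrm{osc}_J\lesssim\|b\|_{\textnormal{BO}}$ before summing via the disjointness of the $Q_J^{up}$, this substitution is exactly what is needed and no gap results.
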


\begin{proof}
Examining the proof of Theorem \ref{mainthm001dropanalytic}, it suffices to show that the operator $\mathcal{Q}_\alpha^\varepsilon$ is bounded on $L^2_\alpha(\R_{+}^2, \sigma)$ for an appropriately small choice of $\varepsilon.$ As before, this is reduced to checking
$$
C_{\sigma,\varepsilon}:= \sup_{I \in \calD} \langle \rho_\varepsilon \sigma \rangle_{Q_I}^{dA_{\alpha-\varepsilon}} \langle \rho_\varepsilon^{-1} \sigma^{-1} \rangle_{Q_I}^{dA_{\alpha-3\varepsilon}} <\infty.$$

 Fix $I \in \calD^j$ arbitrary. We need to estimate these factors a bit more carefully this time. We concentrate on the factor $\langle \rho_\varepsilon \sigma \rangle_{Q_I}^{dA_{\alpha-\varepsilon}}.$  Decomposing $Q_I$ into a union of upper-halves $Q_J^{up}$, using properties of Carleson boxes, and applying H\"{o}lder's inequality we have, letting $\tau>1$ an exponent to be chosen later:
\begin{align*}
& \frac{1}{(\im\, c_I)^{2+\alpha-\varepsilon}} \int_{Q_I}\sigma(w) (\im\, w)^{-2 \varepsilon} \, dA_\alpha(w)\\
& = \frac{1}{(\im \, c_I)^{2+\alpha- \varepsilon}} \sum_{J \subseteq I}  \int_{Q_{J}^{up}} \sigma(w) (\im\,w)^{-2 \varepsilon} \, dA_\alpha(w)\\
& \lesssim \frac{1}{(\im \, c_I)^{2+\alpha- \varepsilon}} \sum_{J \subseteq I} (\im \, c_J)^{-2 \varepsilon} A_\alpha(Q_J^{up}) \left(\frac{1}{A_\alpha(Q_{J}^{up})} \int_{Q_{J}^{up}} \sigma(w) \, dA_\alpha(w) \right)\\
& \leq \frac{1}{(\im \, c_I)^{2+\alpha- \varepsilon}} \left(\sum_{J \subseteq I}(\im\, c_J)^{-2 \varepsilon \tau'} A_\alpha(Q_J^{up}) \right)^{1/\tau'} \left(\sum_{J \subseteq I} A_\alpha(Q_J^{up})\left(\frac{1}{A_\alpha(Q_J^{up})} \int_{Q_J^{up}} \sigma(w) \, dA_\alpha(w) \right)^\tau \right)^{1/\tau}\\
& \lesssim \frac{1}{(\im \, c_I)^{2+\alpha- \varepsilon}} (\im\, c_I)^{(2+\alpha-2 \varepsilon \tau')/\tau'}\left(\sum_{J \subseteq I} A_\alpha(Q_J^{up}) \left[\langle \sigma \rangle_{Q_{J}^{up}}\right] ^\tau \right)^{1/\tau}\\
& \lesssim (\im\, c_I)^{-\varepsilon} \left( \frac{1}{A_\alpha(Q_I)}\sum_{J \subseteq I} A_\alpha(Q_J^{up})\left[\langle \sigma \rangle_{Q_{J}^{up}}\right] ^\tau \right)^{1/\tau}.
\end{align*}
Define the dyadic regularization of $\sigma$ localized to $I$: 
$$ 
\sigma_I= \sum_{J \subseteq I} \langle \sigma \rangle_{Q_{J}^{up}} \mathbf{1}_{Q_{J}^{up}}.
$$

Then the last display is equal to 

$$(\im\, c_I)^{-\varepsilon} \left( \frac{1}{A_\alpha(Q_I)} \int_{Q_I} |\sigma_I|^\tau \, dA_\alpha \right)^{1/\tau}.$$

If we can show that $\sigma_I$ satisfies a reverse H\"{o}lder inequality \emph{on the specific cube $Q_I$} with some exponent $\tau_\sigma$ depending only on $\sigma$ and constant depending on $\sigma$, not $I$, then we will be done. For then
\begin{align*}
\left( \frac{1}{A_\alpha(Q_I)} \int_{Q_I} |\sigma_I|^\tau \, dA_\alpha \right)^{1/\tau} & \lesssim \frac{1}{A_\alpha(Q_I)} \int_{Q_I} |\sigma_I| \, dA_\alpha= \langle \sigma \rangle_{Q_I}.
\end{align*}
and we can proceed as before. The estimate for the other factor will follow in the same way.

It thus suffices to prove the claim for $\sigma_I.$ First, we observe that for any $J \in \mathcal{D}(I)$, there holds $\int_{Q_{J}} \sigma_I \, dA_\alpha= \int_{Q_{J}}\, \sigma\,  dA_\alpha.$ Moreover, using the fact that $\sigma \in \calB_2$, it is easy to verify $\int_{Q_{J}} \sigma_I^{-1} \, dA_\alpha \leq \int_{Q_{J}} \sigma^{-1} \, dA_\alpha.$ As a consequence, we essentially obtain that $\sigma_I$ is a $\calB_2$ weight for ``dyadic descendants'' of $Q_I,$ or precisely
$$ 
\sup_{J \in \mathcal{D}(I)} \langle \sigma_I \rangle_{Q_{J}} \langle \sigma_I^{-1} \rangle_{Q_{J}} \leq [\sigma]_{\calB_2}< \infty.
$$
We use a similar argument as \cite{StockdaleWagner2023} to establish the reverse H\"{o}lder inequality. Let $M_{Q_I}$ be the dyadic maximal function localized to $Q_I$:
$$ 
M_{Q_I}f(z):= \sup_{J \in \calD^j, J \subseteq I} \langle |f| \rangle_{Q_{J}} \mathbf{1}_{Q_{J}}(z).$$
By the comments above, it is clear that for almost every $z \in Q_I,$ we have $M_{Q_I} \sigma_I (z)= M_{Q_I} \sigma(z)$ . Then the fact that $\sigma \in \calB_2 \supset \calB_\infty$ together with the argument given in \cite[Lemma 2.8]{StockdaleWagner2023} imply that there exists $\tau_\sigma>0$ so that we have 
$$\langle (M_{Q_I}\sigma_I)^{1+\tau_\sigma} \rangle_{Q_I} \lesssim [\sigma]_{\calB_\infty} (\langle \sigma \rangle_{Q_I})^{1+\tau_\sigma} $$ 

Moreover, it is obvious from the locally constant property of $\sigma_I$ that there exists an independent constant $C>0$ so $\sigma_I (z) \leq C M_{Q_I} \sigma_I(z)$ for almost every $z \in Q_{I}.$ Then, we can simply estimate:
\begin{align*}
\langle \sigma_I^{1+\tau} \rangle_{Q_I}& \lesssim \langle (M_{Q_I} \sigma_I)^{1+\tau} \rangle_{Q_I} \lesssim [\sigma]_{B_\infty} (\langle \sigma_I \rangle_{Q_I})^{1+\tau},
\end{align*}
which establishes the reverse H\"{o}lder inequality and completes the proof of Theorem \ref{DropRH1}.
\end{proof}

In the special case that $b$ is holomorphic, the above theorem (for $H_{\overline{b}}$) reduces to a weighted generalization of the main result of Axler in \cite{Axler1986}, which we state as follows. We remark that in this case, $b$ belongs to an analog of the classical ``Bloch space'' $\mathcal{B}$, which was the case studied by Axler on the disk in \cite{Axler1986}. 

\begin{cor} 
Let $b \in \textnormal{BMO}^2 \left(\R_+^2 \right) \cap H(\R_+^2)$ and $\sigma \in \calB_2.$ Then $[b, P_\alpha]$ extends to a bounded operator on $L^2_\alpha(\R_{+}^2, \sigma).$
\end{cor}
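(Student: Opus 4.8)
The plan is to reduce the statement to Theorem~\ref{DropRH1} by exploiting two features of the holomorphic setting: that the Hankel operator with holomorphic symbol vanishes, and that a holomorphic $\textnormal{BMO}^2$ function automatically lies in the much more rigid class $\textnormal{BO}(\R_+^2)$. First I would recall the operator identity $[b,P_\alpha]=H_b-H_{\overline b}^*$ on $L^2_\alpha(\R_+^2)$ established in the proof of Theorem~\ref{mainthm001dropanalytic}. Since $b$ is holomorphic, for $f$ bounded and compactly supported the function $b\cdot P_\alpha f$ is again holomorphic and lies in $L^2_\alpha$, so $P_\alpha(b\,P_\alpha f)=b\,P_\alpha f$ and hence $H_b f=(I-P_\alpha)(b\,P_\alpha f)=0$. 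Thus $H_b=0$ and the problem collapses to showing that $H_{\overline b}^*$ extends boundedly to $L^2_\alpha(\R_+^2,\sigma)$.

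The key step is to upgrade the regularity of the symbol, namely to argue that $b\in\textnormal{BMO}^2(\R_+^2)\cap H(\R_+^2)$ forces $\overline b\in\textnormal{BO}(\R_+^2)$. For holomorphic $b$ the oscillation condition defining $\textnormal{BMO}^2$ is independent of the exponent and is equivalent to the Bloch-type condition $\sup_{z}(\im z)\,|b'(z)|<\infty$ (see \cite{Si2022}); integrating this derivative bound along Bergman geodesics yields a Lipschitz estimate $|b(z)-b(w)|\lesssim \|b\|_{\textnormal{BMO}^2}\,(1+d_\beta(z,w))$ of the same type as Proposition~\ref{Lip}, which in particular gives $\sup_{z}\sup_{w\in\beta(z,r)}|b(z)-b(w)|<\infty$. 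Since $|b(z)-b(w)|=|\overline b(z)-\overline b(w)|$ and $\overline b$ is continuous, this is precisely the statement $\overline b\in\textnormal{BO}(\R_+^2)$, with $\|\overline b\|_{\textnormal{BO}}\lesssim\|b\|_{\textnormal{BMO}^2}$. This is exactly where holomorphicity buys the crucial improvement: a generic $\textnormal{BMO}^2$ symbol would only split into a $\textnormal{BO}$ piece plus a genuine $\textnormal{BA}$ piece (Proposition~\ref{splitting}), and it is the $\textnormal{BA}$ piece that forced the $\mathcal{APR}$ hypothesis in Theorem~\ref{mainthm001dropanalytic}; here that piece is absent.

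With $\overline b\in\textnormal{BO}(\R_+^2)$ in hand I would finish by duality. Because $\sigma\in\calB_2$ if and only if $\sigma^{-1}\in\calB_2$, Theorem~\ref{DropRH1} applied to the symbol $\overline b$ and the weight $\sigma^{-1}$ gives that $H_{\overline b}$ is bounded on $L^2_\alpha(\R_+^2,\sigma^{-1})$ with no $\mathcal{APR}$ assumption. Transferring through the unweighted $L^2_\alpha$ pairing — an operator $T$ is bounded on $L^2_\alpha(\sigma^{-1})$ if and only if its unweighted adjoint $T^*$ is bounded on $L^2_\alpha(\sigma)$ — I conclude that $H_{\overline b}^*$, and therefore $[b,P_\alpha]=-H_{\overline b}^*$, is bounded on $L^2_\alpha(\R_+^2,\sigma)$, as claimed.

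The main obstacle I anticipate is the identification $\overline b\in\textnormal{BO}(\R_+^2)$: everything hinges on the fact that for holomorphic symbols the $\textnormal{BMO}^2$ oscillation is controlled by the single derivative quantity $(\im z)\,|b'(z)|$, so that no $\textnormal{BA}$ (and hence no reverse-H\"older or $\mathcal{APR}$) input is needed. The remaining points — the vanishing of $H_b$ and the weighted duality — are routine, modulo the standard density argument justifying the operator identity on bounded, compactly supported $f$.
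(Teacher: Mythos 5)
Your proposal is correct and is essentially the same as the paper's own (second, ``alternative'') proof: the paper likewise observes $H_b\equiv 0$ so that $[b,P_\alpha]=-H_{\overline b}^*$, invokes the fact that $b\in\textnormal{BMO}^2(\R_+^2)\cap H(\R_+^2)$ implies $\overline b\in\textnormal{BO}(\R_+^2)$, and applies Theorem \ref{DropRH1} to $H_{\overline b}$ with the dual weight $\sigma^{-1}$, exactly as you do (your extra details on the Bloch-space characterization and the duality transfer are correct fillings-in of steps the paper cites to \cite{Si2022} and \cite{Zhu07}). The paper also records an even shorter first route you did not mention --- apply the Bloom-type Theorem \ref{mainthm001} with $\mu=\lambda=\sigma$ (so $\nu\equiv 1$), since that $\textnormal{BMOA}_\nu$ hypothesis is weaker than $\textnormal{BMO}^2(\R_+^2)\cap H(\R_+^2)$ --- but your argument stands on its own.
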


\begin{proof}
We give two different ways to prove the above results. The first approach is to simply apply the Bloom type result Theorem \ref{mainthm001} with $\mu=\lambda$ there (with $\nu \equiv 1$ now). Observe that the $\textnormal{BMOA}_{\nu}$ there is weaker than the assumption $b \in \textnormal{BMO}^2 \left(\R_+^2 \right) \cap H(\R_+^2)$. 

Alternatively, since in this case $[b, P_\alpha]=-H_{\overline{b}}^*$ since $H_b \equiv 0$, one can apply Theorem \eqref{DropRH1} to $H_{\overline{b}}$ and the weight $\sigma^{-1}$ to deduce the desired result. Here, we have used the fact that if $b \in \textnormal{BMO}^2 \left(\R_+^2 \right) \cap H(\R_+^2)$, then $\overline{b} \in \textnormal{BO}(\R_+^2)$ (see, e.g.,  \cite{Si2022} or \cite{Zhu07}).
\end{proof}

We can obtain a partial converse to Theorem \ref{mainthm001dropanalytic}. To do so, we need to introduce one additional function space and lemma.

\begin{defn}
Given $r>0$, we say a function $b \in L^2(\R_{+}^2)$ belongs to $\textnormal{BDA}_r(\R_+^2)$ (bounded distance to analytic) if

$$\|b\|_{\textnormal{BDA}_r(\R_+^2)}:=\sup_{z \in \R_{+}^2} \inf_{h \in H(\R_+^2)} \left(\frac{1}{A(\beta(z,r))} \int_{\beta(z,r)} |b-h|^2 \, dA \right)^{1/2}< \infty. $$  
\end{defn}

The following lemma has basically the same proof as the one given in \cite[Theorem 6.1]{Li1994} for bounded strongly pseudoconvex domains.
\begin{lem} \label{BDAIntersect}
Suppose $b, \overline{b} \in \textnormal{BDA}_r(\R_+^2)$. Then $b \in \textnormal{BMO}^2(\R_+^2)$ and satisfies $$\|b\|_{\textnormal{BMO}^2(\R_+^2)} \lesssim \|b\|_{\textnormal{BDA}_r(\R_+^2)}+  \|\overline{b}\|_{\textnormal{BDA}_r(\R_+^2)}.$$
    
\end{lem}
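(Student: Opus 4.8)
The plan is to reduce the whole statement to a single estimate on one Bergman ball, exploiting that $b_{\beta(z,r)}$ is exactly the best $L^2$ approximation of $b$ by constants, so that
$$
\frac{1}{A(\beta(z,r))}\int_{\beta(z,r)}|b-b_{\beta(z,r)}|^2\,dA=\inf_{c\in\C}\frac{1}{A(\beta(z,r))}\int_{\beta(z,r)}|b-c|^2\,dA.
$$
Hence it suffices, for each fixed $z$, to exhibit one constant $c$ whose normalized $L^2$ error over a (slightly smaller) Bergman ball is $\lesssim\|b\|_{\textnormal{BDA}_r}+\|\overline b\|_{\textnormal{BDA}_r}$; taking the supremum over $z$ and invoking the radius-independence $\textnormal{BMO}^2_{r'}=\textnormal{BMO}^2$ then closes the argument.

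First I would feed the two hypotheses into the ball $\beta(z,r)$: pick holomorphic $h$ with $\frac{1}{A(\beta(z,r))}\int_{\beta(z,r)}|b-h|^2\,dA\lesssim\|b\|_{\textnormal{BDA}_r}^2$, and holomorphic $g$ with $\frac{1}{A(\beta(z,r))}\int_{\beta(z,r)}|\overline b-g|^2\,dA\lesssim\|\overline b\|_{\textnormal{BDA}_r}^2$. Conjugating the second, $\overline g$ is \emph{anti}-holomorphic and also approximates $b$, so by the triangle inequality the harmonic function $u:=h-\overline g$ satisfies $\frac{1}{A(\beta(z,r))}\int_{\beta(z,r)}|u|^2\,dA\lesssim\|b\|_{\textnormal{BDA}_r}^2+\|\overline b\|_{\textnormal{BDA}_r}^2$. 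The crucial point, and the reason no constant must be subtracted here, is that $h$ and $\overline g$ approximate the \emph{same} function $b$, so $\|u\|$ itself is small.

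Next I would convert smallness of $\|u\|$ into smallness of the oscillation of the holomorphic part $h$. Since $\overline g$ is anti-holomorphic we have $\partial_w u=h'$, whence $|h'|\le|\nabla u|$. Recalling that a Bergman ball in $\R_{+}^2$ is a (Euclidean-convex) Euclidean disk of radius $\simeq\im z$, the standard interior gradient estimate for harmonic functions applied on $\beta(z,r)$ gives $\sup_{\beta(z,r/2)}|h'|\lesssim(\im z)^{-1}\big(\|b\|_{\textnormal{BDA}_r}+\|\overline b\|_{\textnormal{BDA}_r}\big)$. Because the Euclidean diameter of $\beta(z,r/2)$ is $\lesssim\im z$ and the ball is Euclidean-convex, integrating $h'$ along the segment joining $\xi$ to $z$ yields $\sup_{\xi\in\beta(z,r/2)}|h(\xi)-h(z)|\lesssim\|b\|_{\textnormal{BDA}_r}+\|\overline b\|_{\textnormal{BDA}_r}$. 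Choosing $c=h(z)$ and splitting $|b-h(z)|\le|b-h|+|h-h(z)|$ then bounds $\big(\frac{1}{A(\beta(z,r/2))}\int_{\beta(z,r/2)}|b-h(z)|^2\,dA\big)^{1/2}$ (the first term is controlled since $\beta(z,r/2)\subset\beta(z,r)$ with comparable measures), giving the desired control of $\textnormal{osc}_{\beta(z,r/2)}(b)$ and, after taking suprema and using $\textnormal{BMO}^2_{r/2}=\textnormal{BMO}^2$, the lemma.

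The main obstacle is purely geometric bookkeeping: one must make the comparison between $\beta(z,r)$ and a Euclidean disk of radius $\simeq\im z$ quantitative and \emph{uniform in} $z$, so that both the interior derivative estimate and the integrate-the-derivative step produce constants independent of $z$. This uniformity is exactly the scale invariance of the Bergman metric that underlies the radius-independence of all these function spaces, so it causes no genuine difficulty; beyond it, the argument is just triangle inequalities together with one Cauchy-type estimate for holomorphic/harmonic functions.
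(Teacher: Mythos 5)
Your proof is correct, and it follows essentially the approach the paper itself relies on: the paper gives no independent argument but defers to \cite[Theorem 6.1]{Li1994}, whose proof is exactly this scheme --- take holomorphic approximants $h$ of $b$ and $g$ of $\overline{b}$, note that the harmonic function $h-\overline{g}$ is small in $L^2$ on the Bergman ball (with, as you rightly emphasize, no constant subtraction needed), bound $|h'|\le |\nabla(h-\overline g)|$ by interior derivative estimates, and deduce small oscillation of $b$ on a slightly smaller ball. Your geometric normalizations (Bergman balls in $\R_+^2$ are convex Euclidean disks of radius $\simeq \im z$, and $\textnormal{BMO}^2_r$ is radius-independent with comparable norms) are exactly the facts the paper records in Remark \ref{EquivalentBMO} and the surrounding discussion, so the argument is complete.
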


\begin{thm} \label{PartialConverse}
Let $b \in L^2\left(\R_+^2 \right)$ and $\sigma \in \calB_2.$ Suppose  $[b,P_\alpha]$ extends to a bounded operator on $L^2_\alpha(\R_{+}^2, \sigma)$. Then there holds 
\begin{equation}
\sup_{I \in \calD} {1\over A(Q_I)}\int _{Q_I} |b(z)-b_{Q_I}| dA(z) <+\infty. \label{PartialLowerBound}
\end{equation}
Suppose additionally 
 $\sigma(z)=|h'(z)|^\eta$, where $h$ is a conformal map from $\R_{+}^2$ to some simply connected domain $\Omega \subsetneq \C$ In this case, we have the stronger condition $b \in \textnormal{BMO}^2(\R_{+}^2)$ and moreover $\|b\|_{\textnormal{BMO}^2(\R_+^2)} \approx \|[b,P_\alpha]\|_{L^2_\alpha(\sigma, \R_{+}^2) \rightarrow L^2_\alpha(\sigma, \R_+^2)}.$

\begin{proof}
The first assertion, valid for any  $\sigma \in \mathcal{B}_2$, follows from the proof of necessity in Theorem \ref{mainthm001}, taking $\mu=\lambda=\sigma$ (recall we do not need the analytic hypothesis on $b$ to make use of the complex median method). There is a slight difference here as the measure $dA_\alpha$ is replaced by $dA$, but one can check that the underlying BMO spaces defined by \eqref{PartialLowerBound} and its $\alpha$-weighted counterpart are the same with comparable norms. For example, this fact follows easily when one passes to the BMO norm computed over hyperbolic disks using Remark \ref{EquivalentBMO}.

Now suppose $\sigma(z)=|h'|^\eta$ for conformal $h$. By pulling back $h$ to the disk via the Cayley transform and applying a version of the Koebe distortion theorem, one can see that $h$ is of bounded hyperbolic oscillation. If $[b,P_\alpha]$ is bounded on $L^2_\alpha(\sigma, \R_{+}^2)$, then the Hankel operator $H_b$ is bounded on $L^2_\alpha(\sigma, \R_{+}^2)$ by the equation $[b,P_\alpha]P_\alpha=H_b$ and the weighted estimated for the Bergman projection for $\sigma \in \mathcal{B}_2$. A similar argument shows $H_{\overline{b}}^*$ (where $^*$ denotes the $L^2_\alpha$ adjoint) is bounded on $L^2_\alpha(\sigma, \R_{+}^2)$, so by duality $H_{\overline{b}}$ is bounded on $L^2_\alpha(\sigma^{-1}, \R_{+}^2)$, where $\sigma^{-1}$ has the same form as $\sigma$ (except a power of $-\eta$) and is still a $\mathcal{B}_2$ weight. We will show the weighted bound on $H_b$ implies $b \in \textnormal{BDA}_r(\R_+^2)$ with norm control \begin{equation}\|b\|_{\textnormal{BDA}_r(\R_+^2)} \lesssim \|H_b\|_{L^2_\alpha(\sigma, \R_{+}^2) \rightarrow L^2_\alpha(\sigma, \R_+^2)} \lesssim \|[b,P_\alpha]\|_{L^2_\alpha(\sigma, \R_{+}^2) \rightarrow L^2_\alpha(\sigma, \R_+^2)}, \label{BDANormControl}\end{equation} and then the same argument run for $H_{\overline{b}}$ and the dual weight will give $\overline{b} \in \textnormal{BDA}_r(\R_+^2)$. Then Lemma \ref{BDAIntersect} completes the proof.

We turn to showing $b \in \textnormal{BDA}_r(\R_+^2).$ Let $r>0$ be arbitrary. For any $z \in \R_+^2$, consider the test function $k_z^\sigma(w)= (h')^{-\eta/2}k_z(w)$, where $k_z(w)= c_\alpha \frac{(\text{Im}z)^{1+\alpha/2}}{(z-\bar{w})^{2+\alpha}}$ denotes the normalized reproducing kernel for $A^2_\alpha(\R_+^2)$. An easy computation shows that $k_z^\sigma$ belongs to $A^2_\alpha(\sigma, \R_+^2)$ with unit norm, and moreover, $b k_z^\sigma \in L^2_\alpha(\sigma, \R_+^2)$, since $\| \text{Im}(\cdot)^\alpha k_z(\cdot)\|_{L^\infty(\R_{+}^2)}<\infty$ and $b \in L^2(\R_+^2)$. Moreover, the function $h(\zeta):=\frac{P_\alpha(b k_z^\sigma)(\zeta)}{k_z^\sigma(\zeta)}$ is well-defined and holomorphic on $\R_+^2$. We compute

\begin{align*}
\|H_b\|_{L^2_\alpha(\sigma, \R_{+}^2) \rightarrow L^2_\alpha(\sigma, \R_+^2)}^2& \geq \|H_b(k_z^\sigma)\|_{L^2_\alpha(\sigma, \R_{+}^2)}^2 \\
& = \|b k_z^\sigma- P(b k_z^\sigma)\|_{L^2_\alpha(\sigma, \R_{+}^2)}^2\\
& = \int_{\R_{+}^2} |k_z^\sigma|^2 \left|b- \frac{P_\alpha  (b k_z^\sigma)}{k_z^\sigma} \right |^2 \sigma  \, dA_\alpha \\
& \gtrsim \frac{1}{A(\beta(z,r))} \int_{\beta(z,r)}  |b- h|^2 \, dA.
\end{align*}

Taking a sup over $z \in \R_+^2$ then completes the proof.
    
\end{proof}
    
\end{thm}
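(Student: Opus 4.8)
The plan is to prove the two assertions separately, reusing machinery already in place. For the first assertion, valid for any $\sigma\in\calB_2$, I would specialize the necessity argument of Theorem~\ref{mainthm001} to $\mu=\lambda=\sigma$, so that $\nu\equiv 1$ and the oscillation quantity there collapses to the unweighted average appearing in \eqref{PartialLowerBound}. The essential observation is that the complex median method driving that argument—Lemma~\ref{ComplexMedian} together with the kernel and angle estimates at a distant test box $S_I$—never used holomorphy of $b$, only that $b$ is complex measurable. Running the identical chain of inequalities therefore yields
\[
\frac{1}{A_\alpha(Q_I)}\int_{Q_I}|b-b_{Q_I}|\,dA_\alpha \lesssim \|[b,P_\alpha]\|_{L^2_\alpha(\R_{+}^2,\sigma)\to L^2_\alpha(\R_{+}^2,\sigma)},
\]
which is exactly \eqref{PartialLowerBound} but with $dA_\alpha$ in place of $dA$. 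To pass to the $\alpha=0$ measure I would invoke Remark~\ref{EquivalentBMO}: both averages are comparable to the average over a fixed-radius hyperbolic disk, which does not see the parameter $\alpha$, so the two BMO conditions coincide up to comparable constants.

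For the second assertion I would first record the geometric input that $\sigma=|h'|^\eta$ is of bounded hyperbolic oscillation, i.e.\ lies in $\mathcal{APR}$. This follows by transferring $h$ to the unit disk via the Cayley transform and applying the Koebe distortion theorem, which forces $|h'|$ to oscillate boundedly on Bergman balls of fixed radius; the same reasoning applies to $\sigma^{-1}=|h'|^{-\eta}$, and both remain $\calB_2$ weights. I would then reduce the $\textnormal{BMO}^2$ conclusion to the pair of claims $b\in\textnormal{BDA}_r(\R_+^2)$ and $\overline{b}\in\textnormal{BDA}_r(\R_+^2)$, after which Lemma~\ref{BDAIntersect} finishes. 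The operator bridge is the identity $[b,P_\alpha]P_\alpha=H_b$ together with boundedness of $P_\alpha$ on $L^2_\alpha(\R_{+}^2,\sigma)$ for $\sigma\in\calB_2$ (from \cite{RTW2017}); this shows $H_b$ is bounded on $L^2_\alpha(\R_{+}^2,\sigma)$, and a dual computation with $H_{\overline{b}}^*$ gives $H_{\overline{b}}$ bounded on $L^2_\alpha(\R_{+}^2,\sigma^{-1})$.

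The heart of the matter is the estimate $\|b\|_{\textnormal{BDA}_r}\lesssim\|H_b\|_{L^2_\alpha(\R_{+}^2,\sigma)\to L^2_\alpha(\R_{+}^2,\sigma)}$. I would test $H_b$ against the normalized weighted reproducing kernels $k_z^\sigma(w):=(h'(w))^{-\eta/2}k_z(w)$, where $k_z$ is the normalized Bergman kernel of $A^2_\alpha(\R_+^2)$. One checks $\|k_z^\sigma\|_{A^2_\alpha(\R_{+}^2,\sigma)}\simeq 1$ and $b\,k_z^\sigma\in L^2_\alpha(\R_{+}^2,\sigma)$ using $b\in L^2$ and the boundedness of $(\im\,\cdot)^\alpha k_z$. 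Taking the holomorphic approximant $g:=P_\alpha(b\,k_z^\sigma)/k_z^\sigma$ and expanding
\[
\|H_b(k_z^\sigma)\|^2_{L^2_\alpha(\R_{+}^2,\sigma)}=\int_{\R_+^2}|k_z^\sigma|^2\,|b-g|^2\,\sigma\,dA_\alpha,
\]
I would discard the integral outside $\beta(z,r)$ to obtain a lower bound by $\frac{1}{A(\beta(z,r))}\int_{\beta(z,r)}|b-g|^2\,dA$; since $g$ is an admissible holomorphic competitor this is at least the infimum defining $\textnormal{BDA}_r$, and a supremum over $z\in\R_+^2$ yields the claim.

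The step I expect to be the main obstacle is precisely this last lower bound: one must establish $|k_z^\sigma(w)|^2\sigma(w)\gtrsim 1/A(\beta(z,r))$ uniformly for $w\in\beta(z,r)$, so that the weighted integral dominates the clean unweighted average over the ball. This is where the bounded hyperbolic oscillation of $\sigma$ is decisive: $\sigma(w)$ is comparable to $\sigma(z)$ across $\beta(z,r)$, cancelling the factor $|h'|^{-\eta}=\sigma^{-1}$ hidden in $|k_z^\sigma|^2$, while the standard normalized-kernel bound $|k_z(w)|^2(\im\,w)^\alpha\simeq 1/A(\beta(z,r))$ on $\beta(z,r)$ supplies the remaining size. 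Running the entire argument a second time for $H_{\overline{b}}$ against the dual weight gives $\overline{b}\in\textnormal{BDA}_r$, and Lemma~\ref{BDAIntersect} then upgrades this to $b\in\textnormal{BMO}^2$; the matching upper bound $\|[b,P_\alpha]\|\lesssim\|b\|_{\textnormal{BMO}^2}$ is Theorem~\ref{mainthm001dropanalytic}, which completes the asserted norm equivalence.
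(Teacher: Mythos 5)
Your proposal follows essentially the same route as the paper's own proof: for the first assertion, the complex median necessity argument of Theorem \ref{mainthm001} specialized to $\mu=\lambda=\sigma$ with the $dA_\alpha$-to-$dA$ passage via Remark \ref{EquivalentBMO}, and for the conformal-weight case, the reduction via $H_b=[b,P_\alpha]P_\alpha$ and duality to $\textnormal{BDA}_r$ membership of $b$ and $\overline{b}$, tested against the weighted kernels $k_z^\sigma=(h')^{-\eta/2}k_z$ and concluded by Lemma \ref{BDAIntersect}. Your explicit justification of the final lower bound --- using the bounded hyperbolic oscillation of $\sigma$ to cancel the weight across $\beta(z,r)$ together with the standard size estimate for the normalized kernel --- is precisely the step the paper leaves implicit, so the argument is sound and matches the paper's.
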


\begin{rem}
Such examples of $\calB_2$ weights arise naturally from conformal mapping arguments; see, for example, \cite{LS2004}.    
\end{rem}

\subsection{A Counterexample} \label{20250107subsec01}
Here, we provide a counterexample to show that the desired Carleson embedding may \emph{not} hold with a general $\calB_2$ weight. Indeed, Theorem \ref{mainthm001dropanalytic} fails for general $\calB_2$ weights. 

For simplicity, we let $\alpha=0$. For the weight, let $$\sigma(z)= \frac{1}{|\im \, z-\frac{1}{2}|^{1/2}}.$$
It is obvious that $\sigma^{-1}$ is locally integrable; for the local integrability of $\sigma$ it suffices to note that $\sigma$ is integrable on the Euclidean disk $D(a+\frac{i}{2},\frac{1}{4})$ where $a \in \R$:
\begin{align*}
\int_{D(a+\frac{i}{2},\frac{1}{4})}  \frac{1}{|\im\,z-\frac{1}{2}|^{1/2}} \, dA(z) &  =  \int_{D(0,\frac{1}{4})} \frac{1}{\left|\im\,\zeta\right|^{\frac{1}{2}}} \, dA(\zeta) < \infty.
\end{align*}
To verify that $\sigma$ is a $\calB_2$ weight, we first consider the case when $|I|\leq \frac{1}{4}.$ Then notice we have the pointwise estimate $\sqrt{2} \leq \sigma(z) \leq 2$ for all $z \in Q_I$. Therefore, if $|I| \leq \frac{1}{4},$ we easily see that $\langle \sigma \rangle_{Q_I } \leq 2$ and $\langle \sigma^{-1} \rangle_{Q_I } \leq \frac{1}{\sqrt{2}}$.

Next, we consider the case when $|I|>\frac{1}{4}.$ Then the interval $(0,4|I|)$ contains the value $\frac{1}{2}$, and we can estimate
\begin{align*}
\int_{Q_I} \sigma(z) \, dA(z) & \leq |I| \int_{0}^{4 |I|} \frac{1}{\sqrt{|y-\frac{1}{2}|}} \, dy  \\
& \leq 2 |I| \int_{0}^{4 |I|} \frac{1}{\sqrt{|y}|} \, dy \\
& \lesssim |I|^{3/2},
\end{align*}
where the implicit constant is absolute.
Entirely similarly, we can check $\int_{Q_I} \sigma(z)^{-1} \, dA(z) \lesssim |I|^{5/2},$ so that 
$$ 
\langle \sigma \rangle_{Q_I} \langle \sigma^{-1} \rangle_{Q_I} \lesssim 1,
$$ 
as required.

Note that $\sigma$ obviously fails to be an $\mathcal{APR}$ weight, and the above arguments actually established that $\sigma \, dA$ is a Carleson measure (as $\sup_{I \in \calD} \langle \sigma \rangle_{Q_I}<+\infty$)! However, it is clear that
$$
\int_{D(\frac{i}{2}, \frac{1}{4})} \sigma^{2} \, dA= + \infty.
$$
We now show that this example can lead to a case where $\sigma \in \calB_2$ and $b \in {\rm BMO}^2$, but $[b, P]$ is not bounded from $L^2(\R_{+}^2, \sigma)$ to $L^2(\R_{+}^2, \sigma).$ We let $\sigma(z)=\frac{1}{\left|\im\,z-\frac{1}{2}\right|^{\frac{1}{2}}}$ as above, and let $b(z)= \sigma^{1/2}(z) \chi_{D(\frac{i}{2},\frac{1}{4})}(z)$. By the computations above, it is clear that $|b|^2 dA$ is still a Carleson measure on $A^2\left(D\left(\frac{i}{2}, \frac{1}{4} \right) \right)$, so that $b \in \textnormal{BA}(\R_+^2) \subset \textnormal{BMO}^2(\R_+^2)$.  Note that the function $ f:=\mathbf{1}_{D(\frac{i}{2}, \frac{1}{4})} \in L^2(\R_{+}^2, \sigma)$, 
 so to prove the operator $[b,P]$ is unbounded it suffices to show $\|b P(f)-P(bf)\|_{L^2(\sigma)}^2= + \infty.$ To this end, we provide a pointwise estimate on $P(bf)$:
\begin{align*}
|P(bf)(z)| & \leq  \int_{D(\frac{i}{2},\frac{1}{4})} \frac{1}{|z-\overline{w}|^2} \frac{1}{|\im\, w-\frac{1}{2}|^{1/4}} \, dA(w) \\
& \leq 16 \int_{D(\frac{i}{2},\frac{1}{4})}  \frac{1}{|\im\, w-\frac{1}{2}|^{1/4}} \, dA(w)  \\
& \leq c,
\end{align*}
where $c$ is an independent constant.

 Next, note that if $z \in D(\frac{i}{2}, \frac{1}{4})$, we also have the lower bound, using the mean value property:
\begin{align*}
|P f(z)| & = \left| \int_{D(\frac{i}{2},\frac{1}{4})} \frac{1}{(z-\overline{w})^2}  \, dA(w) \right| = \frac{1}{|z+\frac{i}{2}|^2} \geq \frac{16}{25}.
\end{align*}
Then we have, altogether,
$$ \int_{D(\frac{i}{2},\frac{1}{4})} |b(z)|^2 |Pf(z)|^2 \sigma(z) \, dA(z)= + \infty, \quad \int_{D(\frac{i}{2},\frac{1}{4})}  |P(bf)(z)|^2 \sigma(z) \, dA(z)< + \infty,   $$
by which we deduce $\|b P(f)-P(bf)\|_{L^2(\R_+^2, \sigma)}^2= + \infty$ and hence obtain the desired contradiction.

\subsection{Compactness Results}

We now introduce vanishing mean oscillation analogs of the spaces introduced in the preceding section. 

\begin{defn}
Let $b \in \textnormal{BMO}_r^2(\R_+^2)$ for $r>0.$ We say $b\in \textnormal{VMO}_r^2(\R_+^2)$  if
$$ \lim_{\varepsilon \rightarrow 0^{+}} \sup_{\substack{z \in \R_{+}^2:\\ \im z \leq \varepsilon}} \left(\frac{1}{A(\beta(z,r))} \int_{\beta(z,r)} |b-  b_{\beta(z,r)}|^2 \, dA \right)^{1/2}=0 
$$
and 
$$ 
\lim_{R \rightarrow \infty} \sup_{\substack{z \in \R_{+}^2:\\ |z| \geq R}} \left(\frac{1}{A(\beta(z,r))} \int_{\beta(z,r)} |b- b_{\beta(z,r)}|^2 \, dA \right)^{1/2}=0.
$$
\end{defn}

\begin{defn}
Let $b \in \textnormal{BO}_r(\R_+^2)$ for $r>0.$ We say $b \in \rm{VO}_r(\R_+^2)$  if
$$ 
\lim_{\varepsilon \rightarrow 0^{+}} \sup_{\substack{z \in \R_{+}^2:\\ \im z \leq \varepsilon}} \sup_{w \in \beta(z,r)} |b(z)-b(w)|=0
$$ 
and 
$$ 
\lim_{R \rightarrow \infty} \sup_{\substack{z \in \R_{+}^2:\\ |z| \geq R}} \sup_{w \in \beta(z,r)} |b(z)-b(w)|=0. 
$$
\end{defn}

\begin{defn}
Let $b \in \textnormal{BA}_r(\R_+^2)$ for $r>0.$ We say $b \in \rm{VA}_r(\R_+^2)$  if
$$  \lim_{\varepsilon \rightarrow 0^{+}} \sup_{\substack{z \in \R_{+}^2:\\ \im z \leq \varepsilon}} \left(\frac{1}{A(\beta(z,r))} \int_{\beta(z,r)} |b|^2 \, dA \right)^{1/2}=0 $$ and
 $$ \lim_{R \rightarrow \infty} \sup_{\substack{z \in \R_{+}^2:\\ |z| \geq R}} \left(\frac{1}{A(\beta(z,r))} \int_{\beta(z,r)} |b|^2 \, dA \right)^{1/2}=0.
 $$
\end{defn}

Again, these function spaces are independent (as sets) of the parameter $r$, and we drop the explicit dependence on $r$ in what follows. Moreover, similar to the $\textnormal{BMO}^2(\R_+^2)$ case, we could replace the $\textnormal{VMO}^2(\R_+^2)$ conditions involving averages over Bergman balls by averages over Carleson boxes $Q_I$ (see again, \cite[Section 5]{HHLPW2024} for more details). We have the following splitting of $\textnormal{VMO}^2(\R_+^2)$ functions. 

\begin{prop}\label{VMO splitting}
Any function $b \in \textnormal{VMO}^2(\R_+^2)$ has a decomposition $b=b_1+b_2$ with $b_1 \in \rm{VO}(\R_+^2)$, $b_2 \in \rm{VA}(\R_+^2)$.

\end{prop}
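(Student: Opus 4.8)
The plan is to use the \emph{same} decomposition that already appears in Proposition \ref{splitting}, namely
$$
b_1(z) := \frac{1}{A(\beta(z,r))}\int_{\beta(z,r)} b \, dA, \qquad b_2 := b - b_1,
$$
and simply to \emph{upgrade} the conclusions $b_1 \in \textnormal{BO}(\R_+^2)$, $b_2 \in \textnormal{BA}(\R_+^2)$ to their vanishing analogues. Since $b \in \textnormal{VMO}^2(\R_+^2) \subset \textnormal{BMO}^2(\R_+^2)$, Proposition \ref{splitting} already furnishes this splitting together with the memberships $b_1 \in \textnormal{BO}$ and $b_2 \in \textnormal{BA}$; in particular $b_1$ is continuous, which is the standing requirement in the definition of $\textnormal{VO}(\R_+^2)$. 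Thus only the two limiting conditions (as $\im z \to 0^+$ and as $|z| \to \infty$) remain to be verified for each piece.

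First I would record the pointwise estimates that drive the proof of Proposition \ref{splitting}, read off at a single center $z$. Writing $\mathrm{Osc}_2(b;z,\rho) := \big(\tfrac{1}{A(\beta(z,\rho))}\int_{\beta(z,\rho)} |b - b_{\beta(z,\rho)}|^2 \, dA\big)^{1/2}$ and using that $\beta(z,r),\beta(w,r) \subset \beta(z,2r)$ for $w \in \beta(z,r)$, the comparability $A(\beta(z,r)) \simeq A(\beta(w,r)) \simeq A(\beta(z,2r))$, and Cauchy--Schwarz with the constant average $c_z := b_{\beta(z,2r)}$, one obtains
\begin{align*}
\sup_{w \in \beta(z,r)} |b_1(z) - b_1(w)| &\lesssim \mathrm{Osc}_2(b;z,2r), \\
\left(\frac{1}{A(\beta(z,r))}\int_{\beta(z,r)} |b_2(w)|^2 \, dA(w)\right)^{1/2} &\lesssim \mathrm{Osc}_2(b;z,2r).
\end{align*}
These are exactly the local bounds underlying Proposition \ref{splitting}; the only feature to emphasize is that the controlling quantity on the right is the oscillation over the slightly larger ball $\beta(z,2r)$.

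With these in hand the conclusion is immediate. By the radius-independence of the spaces $\textnormal{VMO}^2_r$ recorded just after the definitions, the hypothesis $b \in \textnormal{VMO}^2(\R_+^2)$ forces the two defining limits to hold with radius $2r$ as well, i.e. $\sup_{\im z \le \varepsilon} \mathrm{Osc}_2(b;z,2r) \to 0$ as $\varepsilon \to 0^+$ and $\sup_{|z| \ge R} \mathrm{Osc}_2(b;z,2r) \to 0$ as $R \to \infty$. Feeding this into the displayed pointwise bounds yields at once the defining limits for $\textnormal{VO}(\R_+^2)$ for $b_1$ and those for $\textnormal{VA}(\R_+^2)$ for $b_2$, which completes the argument.

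I expect the only genuinely delicate point to be the passage from radius $r$ to radius $2r$ in the oscillation, specifically in the regime where $z$ is far from both the boundary and the origin ($\im z \simeq |z| \to \infty$), where a Bergman ball of fixed radius has large Euclidean diameter and so the region $\{|z| \ge R\}$ need not be preserved under mild dilation of the ball. This is precisely what the radius-independence of $\textnormal{VMO}^2$ (and of $\textnormal{VO}$, $\textnormal{VA}$) absorbs, via the covering/geometric-summation argument referenced after the definitions and in \cite[Section 5]{HHLPW2024}. Alternatively, one may sidestep it entirely by performing the averaging at a sufficiently small radius $\rho$, chosen so that $\beta(z,2\rho) \subset \{w : |w| \ge |z|/2\}$ for all $z$, and then invoking the radius-independence of $\textnormal{VO}$ and $\textnormal{VA}$ to return from scale $\rho$ to the stated scale $r$.
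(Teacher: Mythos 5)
Your proof is correct, and it is worth pointing out that the paper itself gives \emph{no} argument for Proposition \ref{VMO splitting}: the statement is recorded without proof, as the vanishing analogue of Proposition \ref{splitting} (both belonging to the circle of results in \cite{Si2022}). What you have supplied is precisely the argument the paper leaves implicit, and with the decomposition the paper itself has in mind: the ball-average splitting $b_1(z)=\frac{1}{A(\beta(z,r))}\int_{\beta(z,r)}b\,dA$, $b_2=b-b_1$ is exactly the one recorded in the proof of Theorem \ref{mainthm001dropanalytic}. Your two displayed pointwise bounds (the oscillation of $b_1$ and the local $L^2$ average of $b_2$ at scale $r$, each controlled by the $L^2$ oscillation of $b$ at scale $2r$, with constants depending only on $r$ by the uniform comparability of volumes of nearby Bergman balls) are the standard estimates behind Proposition \ref{splitting}, and feeding in the vanishing of the scale-$2r$ oscillation --- legitimate by the radius-independence of $\textnormal{VMO}^2$ asserted in the paper right after the definitions --- yields the $\textnormal{VO}$ and $\textnormal{VA}$ limits for $b_1$ and $b_2$ respectively. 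Continuity of $b_1$ is immediate from the averaging formula, so the proof is complete.

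One remark on the point you flag as ``genuinely delicate'': it is in fact a non-issue, for a concrete geometric reason. In $\R_+^2$ the Bergman ball $\beta(z,\rho)$, $z=x+iy$, is the Euclidean disk with center $x+iy\cosh\rho$ and radius $y\sinh\rho$; consequently every $w\in\beta(z,\rho)$ satisfies $e^{-\rho}\im z\le \im w\le e^{\rho}\im z$ and also
\begin{equation*}
|w|\;\ge\;\sqrt{x^2+y^2\cosh^2\rho}-y\sinh\rho\;\ge\;\frac{|z|}{1+2\sinh\rho},
\end{equation*}
so the regions $\{\im z\le\varepsilon\}$ and $\{|z|\ge R\}$ \emph{are} preserved, up to constants depending only on $\rho$, when passing from the center to the points (and hence to the centers of any covering balls) of $\beta(z,\rho)$. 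The ball does have Euclidean diameter comparable to $\im z$, but it is centered at height $y\cosh\rho$, so it cannot dip toward the origin. Thus the direct argument at scale $2r$ goes through with no covering subtlety; invoking the stated radius-independence, as you do, is of course equally valid, and your fallback of shrinking the averaging radius is unnecessary.
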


This decomposition can be used to prove the expected compactness result for the commutator $[b,P_\alpha]$ on $L^2(\sigma)$ for $\sigma$ a $\calB_2$ weight with bounded hyperbolic oscillation. 

\begin{thm} \label{mainthm002dropanalytic}
Let $\sigma \in \calB_2 \cap \mathcal{APR}$. Then if $b \in {\rm VMO}^2(\R_{+}^2)$, the commutator $[b, P_\alpha]$ is compact from  $L_\alpha^2 \left(\R_{+}^2, \sigma \right)$ to $L_\alpha^2 \left(\R_{+}^2, \sigma \right) $. 
\end{thm}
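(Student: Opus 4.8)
The plan is to adapt the decomposition scheme of the proof of Theorem~\ref{mainthm002}, replacing the holomorphic oscillation input (Lemma~\ref{OscLemma}) by the $\textnormal{VO}$/$\textnormal{VA}$ machinery. As in the proof of Theorem~\ref{mainthm001dropanalytic}, I would first reduce to a statement about Hankel operators. Since $b\in\textnormal{VMO}^2(\R_+^2)$ forces $\overline{b}\in\textnormal{VMO}^2(\R_+^2)$, and $\sigma\in\calB_2\cap\mathcal{APR}$ if and only if $\sigma^{-1}\in\calB_2\cap\mathcal{APR}$, the identity $[b,P_\alpha]=H_b-H_{\overline{b}}^*$ together with the equivalence of compactness of $H_{\overline{b}}^*$ on $L^2_\alpha(\R_+^2,\sigma)$ and of $H_{\overline{b}}$ on $L^2_\alpha(\R_+^2,\sigma^{-1})$ (the adjoint being taken in the unweighted $L^2_\alpha$ pairing) reduces the theorem to showing that $H_b$ is compact on $L^2_\alpha(\R_+^2,\sigma)$ whenever $b\in\textnormal{VMO}^2(\R_+^2)$ and $\sigma\in\calB_2\cap\mathcal{APR}$. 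I would then invoke Proposition~\ref{VMO splitting} to write $b=b_1+b_2$ with $b_1\in\textnormal{VO}(\R_+^2)$ and $b_2\in\textnormal{VA}(\R_+^2)$, so $H_b=H_{b_1}+H_{b_2}$, and treat the two summands by different mechanisms.

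For $H_{b_1}$, note that on holomorphic $g=P_\alpha f$ the operator acts through the kernel $(b_1(z)-b_1(w))K_\alpha(z,w)$, so I would decompose $K_\alpha=\sum_{t=0}^{3}K_\alpha^t$ via the bump functions $\phi(|z-\overline{w}|)$ and $\phi(\tfrac{1}{|z|+|w|})$, exactly as in the proof of Theorem~\ref{mainthm002}, and write $H_{b_1}=\sum_{t=0}^{3}H_{b_1}^t$. The terms $t=0,1,2$ are supported either where $|z-\overline{w}|\le\eta$ (which forces $\im z,\im w\lesssim\eta$, i.e.\ near the boundary) or where $|z|+|w|\gtrsim\eta^{-1}$ (far from the origin); on these regions the $\textnormal{VO}$ hypothesis makes the \emph{localized} $\textnormal{BO}$-seminorm of $b_1$ at most some $\epsilon_\eta$ with $\epsilon_\eta\to0$ as $\eta\to0$. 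Feeding this into the Lipschitz bound (Proposition~\ref{Lip}) and the pointwise domination by $\mathcal{Q}_\alpha^\varepsilon+P_\alpha^+$ from the proof of Theorem~\ref{mainthm001dropanalytic}, I would obtain $\|H_{b_1}^t\|_{L^2_\alpha(\sigma)\to L^2_\alpha(\sigma)}\lesssim\epsilon_\eta$ for $t=0,1,2$. The remaining term $H_{b_1}^3$ has kernel supported on the bounded set where $|z|,|w|\le 2\eta^{-1}$ and $|z-\overline{w}|>\eta/2$, on which $K_\alpha^3$ is bounded and $b_1$ is continuous; conjugating by $\sigma^{1/2}$ and $\sigma^{-1/2}$ and using local integrability of $\sigma,\sigma^{-1}$ together with the $\mathcal{APR}$ reverse-Hölder inequality to control the off-support decay of $K_\alpha$, this term is Hilbert--Schmidt, hence compact. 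Thus $H_{b_1}=A_\eta+B_\eta$ with $\|A_\eta\|\to0$ and $B_\eta$ compact, so $H_{b_1}$ is compact.

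For $H_{b_2}$ with $b_2\in\textnormal{VA}(\R_+^2)$, I would instead show that $M_{b_2}\colon A^2_\alpha(\R_+^2,\sigma)\to L^2_\alpha(\R_+^2,\sigma)$ is compact, which suffices since $H_{b_2}=(I-P_\alpha)M_{b_2}P_\alpha$ and $P_\alpha,(I-P_\alpha)$ are bounded on $L^2_\alpha(\sigma)$ for $\sigma\in\calB_2$. Repeating the computation of Proposition~\ref{Carleson} in the weighted setting (as in the $H_{b_2}$ part of Theorem~\ref{mainthm001dropanalytic}, which uses the $\mathcal{APR}$ condition on $\sigma$), but now tracking the localized $\textnormal{BA}$-averages, the $\textnormal{VA}$ hypothesis shows that $|b_2|^2\sigma\,dA_\alpha$ is a \emph{vanishing} Carleson measure for $A^2_\alpha(\R_+^2,\sigma)$, i.e.\ its Carleson constant over $Q_I$ tends to $0$ as $|I|\to0$ and as $|c_I|\to\infty$. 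A standard normal-families argument (a bounded sequence in $A^2_\alpha(\sigma)$ has a subsequence converging weakly and uniformly on compacta, and the vanishing Carleson condition kills the contributions near the boundary and near infinity) then yields compactness of the embedding $A^2_\alpha(\R_+^2,\sigma)\hookrightarrow L^2(|b_2|^2\sigma\,dA_\alpha)$, hence of $M_{b_2}$. Combining the two pieces gives compactness of $H_b$, and therefore of $[b,P_\alpha]$.

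I expect the main obstacle to be the $H_{b_1}$ term, specifically making precise that $\textnormal{VO}$ furnishes a \emph{small localized} $\textnormal{BO}$-seminorm governing the pieces $t=0,1,2$. The subtlety is that $\textnormal{VO}$ controls oscillation but not the size of $b_1$, which may grow; the correct statement is that for $z,w$ with $|z-\overline{w}|\le\eta$ the Bergman geodesic joining them stays in the region $\{\im\zeta\lesssim\eta\}$, so the chaining in Proposition~\ref{Lip} only encounters near-boundary balls on which the oscillation is $\le\epsilon_\eta$, and analogously for the far-from-origin pieces. Verifying this localization, and checking that it passes cleanly through the $\mathcal{Q}_\alpha^\varepsilon+P_\alpha^+$ domination with a constant vanishing as $\eta\to0$, is the technical heart of the argument; by contrast the $H_{b_2}$ compactness is routine once the vanishing Carleson property is in hand.
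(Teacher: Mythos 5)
Your architecture is the same as the paper's: reduce to compactness of the Hankel operator $H_b$ on $L^2_\alpha(\R_+^2,\sigma)$ (via $[b,P_\alpha]=H_b-H_{\overline b}^*$, duality in the unweighted pairing, and the symmetry of the hypotheses under $b\mapsto\overline b$, $\sigma\mapsto\sigma^{-1}$), split $b=b_1+b_2$ by Proposition \ref{VMO splitting}, handle $H_{b_2}$ by showing $|b_2|^2\sigma\,dA_\alpha$ is a vanishing Carleson measure and invoking Montel, and handle $H_{b_1}$ by a four-piece kernel cut-off with three small-norm pieces and one Hilbert--Schmidt piece. The one methodological difference is the smallness mechanism for the pieces $t=0,1,2$: the paper writes $H_{b_1}=[b_1,P_\alpha]P_\alpha$ and reruns the sparse-domination argument of Theorem \ref{mainthm002}, noting that Lemma \ref{OscLemma} is valid for $b_1$ (it is a Bergman-ball average of $b$), so smallness is only ever required of oscillation \emph{averages over Carleson tents} that are small or have far-away centers; you instead use the $\textnormal{BO}$-chaining bound (Proposition \ref{Lip}) fed into the $\mathcal{Q}_\alpha^\varepsilon+P_\alpha^{+}$ domination of Theorem \ref{DropRH1}. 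That route is a legitimate alternative and is the natural compactness analogue of the paper's second proof of boundedness.

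However, your localization claim---which you correctly flag as the technical heart---fails for the piece $t=2$. The support of $K_\alpha^2$ only requires $|z-\overline w|\ge \eta/2$ and $|z|+|w|\ge\eta^{-1}$; it contains pairs such as $z=i$, $w=i/\eta$, whose geodesic passes through points like $2i$ lying in the ``bulk'' (neither near the boundary nor far from the origin), where $\textnormal{VO}$ gives no smallness. So it is not true that the chaining only meets balls of oscillation at most $\epsilon_\eta$, and the pointwise bound $|b_1(z)-b_1(w)|\le\epsilon_\eta(1+d_\beta(z,w))$ does not follow as you describe; the naive substitute $|b_1(z)-b_1(w)|\le C_\delta+\delta(1+d_\beta(z,w))$ is useless here, since the additive constant produces $C_\delta$ times a restricted positive Bergman operator whose norm does not decay in $\eta$. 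The gap is repairable by a dichotomy: (i) if both $|z|,|w|\ge\rho$, the geodesic stays in $\{|\zeta|\ge\rho\}$, because that set is bounded by the geodesic $\{|\zeta|=\rho\}$ and is therefore a hyperbolic half-plane, hence convex, so pure chaining applies; (ii) if the geodesic meets the bulk $K=\{\im\zeta>\varepsilon_0,\ |\zeta|<R_0\}$, then, since one endpoint has modulus at least $\tfrac{1}{2}\eta^{-1}$, one has $d_\beta(z,w)\ge\log(1/\eta)-C_\delta$, while the geodesic spends hyperbolic length at most $\mathrm{diam}_\beta(K)$ inside $K$; hence $|b_1(z)-b_1(w)|\le \bigl(\delta+C_\delta/\log(1/\eta)\bigr)\bigl(1+d_\beta(z,w)\bigr)$, which suffices upon letting $\delta\to0$ slowly as $\eta\to0$. (Alternatively, switch to the paper's sparse route for this piece, where the VMO hypothesis is applied directly to tent averages over tents with far-away centers.) With that repair, your $t=3$ Hilbert--Schmidt estimate (using the reverse H\"older inequality \eqref{20240718eq69} available from $\mathcal{APR}$) and your treatment of $H_{b_2}$ go through essentially as in the paper.
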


\begin{proof}
We only sketch the argument, as many of the ideas have been seen before. As before, it is sufficient to check that the Hankel operator $H_b$ is compact on $L_\alpha^2 \left(\R_{+}^2, \sigma \right)$ under the same assumptions on $b, \sigma.$ For this, use the splitting given in Proposition \ref{VMO splitting} to write 
$$
H_b=H_{b_1}+H_{b_2}, 
$$
with $b_1 \in \textnormal{VO}(\R_+^2)$ and $b_2 \in \textnormal{VA}(\R_+^2)$. We can then show $H_{b_1}$ and $H_{b_2}$ are compact separately. For $H_{b_1},$ use the operator identity $H_{b_1}= [b_1, P_\alpha] P_\alpha$ to reduce the problem to checking the compactness of  $[b_1, P_\alpha]$. Then split $[b_1, P_\alpha]= \sum_{j=0}^3 [b_1, P_\alpha^j]$ for a small parameter $\eta$ as in the proof of Theorem \ref{mainthm002}. The operator $[b_1, P_\alpha^3]$ will be compact (in fact Hilbert--Schmidt), and this can be verified by a direct integral calculation as before. The operators $[b_1, P_\alpha^j]$, $j=0,1,2$ will all be bounded with small operator norm by using a similar argument as the one given in Theorem \ref{mainthm002} with sparse domination, together with the $b_1 \in VO$ condition. Note that $b_1$ satisfies the conclusion of Lemma \ref{OscLemma} in this case, so the sparse domination argument can be applied.

To handle $H_{b_2}$, it is enough to show that the multiplication operator $M_{b_2}$ is compact on the Bergman space $A^2_\alpha(\R_{+}^2, \sigma). $ This can be achieved via a vanishing Carleson measure type argument, and we omit the details. In particular, it is easy to show via Montel's Theorem that if $\{f_n\}$ converges weakly to $0$ in $A^2_\alpha(\R_{+}^2, \sigma)$, then $\{f_n\} $ converges to $0$ uniformly on compact subsets of $\R_{+}^2$. The smallness of averages of $|b_2|^2$ on top-halves $Q_I^{up}$ off such a compact subset then allows us to control the $L^2_\alpha(\sigma,  dA_\alpha)$ norm of $b_2 f_n$ by a small constant, imitating the earlier proof of boundedness when the symbol $b_2$ belonged to \rm{BA} in Theorem \ref{mainthm001dropanalytic}.  
\end{proof}

\medskip
\section{Unit ball analog} \label{s:5}
Finally, we extend our main results to the unit ball case, whose proof follows closely from the upper half plane case, and hence, we would like to leave the details to the interested reader. Moreover, one can also extend these results further to general domains by following the framework developed in \cite{GHK2022} and \cite{HWW2021}. We start with recalling some definitions first. 

Let $\B$ be the open unit ball in $\C^n$, $\partial \B$ be its boundary, and $H(\B)$ be the ball algebra of holomorphic functions. Recall that under the metric $d_{\partial \B}(z, w):=\left|1- \langle z, w \rangle \right|^{1/2}$, $\partial \B$ becomes a space of homogeneous type and hence there exists a finite collection of dyadic systems $\calD_i, i=1, \dots, N$ that play the role of $1/3$-trick. Again, write $\calD:=\bigcup_{i=1}^N \calD_i$. Now for each $S \in \calD_i$, one can associate it with a Carleson tent $Q_S$ and an upper Carleson tent $Q_S^{up}$, in the spirit of \eqref{20250121eq21} and \eqref{20250121eq22}, respectively. This is known as the \emph{Bergman tree} structure in $\B$ and we refer the reader \cite[Section 2]{RTW2017} for more details. With these, we define the weight classes needed in a natural way. 

\begin{defn} [Dyadic B\'ekolle--Bonami $\calB_2$ weights on $\B$]
Let $\mu$ be a weight on $\B$ and $\alpha>-1$, we say $\mu$ belongs to the \emph{dyadic B\'ekolle--Bonami $\calB_2$ weight class} on $\B$ if 
$$
    \left[\mu \right]_{\calB_2}:=\sup_{S \in \calD} \left( \frac{1}{V_\alpha(Q_S)} \int_{Q_S} \mu(z) dV_\alpha(z) \right) \left( \frac{1}{V_\alpha(Q_S)} \int_{Q_S} \mu^{-1}(z) dV_\alpha(z) \right)<+\infty,
$$
where $dV_\alpha(z):=c_\alpha (1-|z|^2)^\alpha dV(z)$ is the standard probability measure on $\B$ with $c_\alpha$ being the normalizing constant. 
\end{defn}

\begin{defn}
Suppose $\nu\in \calB_2$. For $b \in L_{ loc}^1(\B, dV_\alpha)$, we say $b \in {\rm BMO}_{\nu} \left(\B \right)$ if 
$$
\left\|b \right\|_{{\rm BMO}_{\nu}(\B)}:=\sup_{S \in \calD} {1\over \nu(Q_S)}\int _{Q_S} |b(z)-b_{Q_S}|dV_\alpha(z)<+\infty.
$$
Moreover, if $b \in H(\B)$, we say $b \in {\rm BMOA}_{\nu}\left(\B \right).$
\end{defn}

\begin{defn}
Suppose $\nu\in \calB_2$. For $b \in {\rm BMO}_{\nu}\left(\B \right)$, we say $b \in {\rm VMO}_{\nu}(\B)$ if 
$$
\lim_{\textrm{diam}(S) \rightarrow 0}\sup_{Q_S} {1\over \nu(Q_S)}\int_{Q_S} |b(z)-b_{Q_S}|dV_\alpha(z)= 0.
$$
If in addition to satisfying the above, $b$ is holomorphic, then we say $b \in {\rm VMOA}_{\nu}\left(\B \right)$. Here, $\textnormal{diam}(S):=\sup_{z, w \in S}|z-w|$ is the \emph{diameter} of the set $S$.
\end{defn}
Then for the case when $b$ is a holomorphic symbol, we have the following:

\begin{thm}
Let $\mu, \lambda \in \calB_2$ and set $\nu=\mu^{\frac{1}{2}}\lambda^{-\frac{1}{2}}$. Let further, $b \in L_{ loc}^1\left(\B, dV_\alpha \right) \cap H(\B) $. Then the commutators $[b, P_\alpha]$ and $[b, P^+_\alpha]$ are  
\begin{enumerate} 
\item [$\bullet$] bounded from  $L_\alpha^2 \left(\B, \mu \right)$ to $L_\alpha^2 \left(\B, \lambda \right) $ if and only if $b \in {\rm BMOA}_{\nu} \left(\B \right)$. Moreover, we have the following Bloom-type estimates: 
$$ 
\|[b, P_\alpha]\|_{L_\alpha^2 \left(\B, \mu \right) \mapsto L_\alpha^2 \left(\B, \lambda \right) }, \; \|[b, P^+_\alpha]\|_{L_\alpha^2 \left(\B, \mu \right) \mapsto L_\alpha^2 \left(\B, \lambda \right) } \simeq \|b\|_{ {\rm BMOA}_{\nu} \left(\B \right) }.
$$ 
\item [$\bullet$] compact from  $L_\alpha^2 \left(\B, \mu \right)$ to $L_\alpha^2 \left(\B, \lambda \right) $ if and only if $b \in {\rm VMOA}_{\nu}\left(\B \right)$. 
\end{enumerate}
\end{thm}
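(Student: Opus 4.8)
The plan is to transfer the proofs of Theorems \ref{mainthm001} and \ref{mainthm002} almost verbatim, replacing the dyadic Carleson structure $\{Q_I\}_{I \in \calD}$ on $\R_+^2$ by the Bergman tree $\{Q_S\}_{S \in \calD}$ on $\B$, the area measure $dA_\alpha$ by $dV_\alpha$, and the kernel $(z-\overline{w})^{-(2+\alpha)}$ by $(1-\langle z,w\rangle)^{-(n+1+\alpha)}$ (which specializes to the half-plane exponent when $n=1$). Three structural inputs carry over without essential change: (i) the pointwise sparse domination of $P_\alpha^{+}$ by the dyadic averaging operators $\mathcal{A}_{\calD_i}$, which is the ball version of Proposition \ref{BergSparse} from \cite{PottReguera2013} and \cite{RTW2017}; (ii) the weighted $L^2$ bounds for $\mathcal{A}_{\calD_i}$ and for the dyadic maximal operator $M_{\calD_i}$ on $\B$, again supplied by \cite{RTW2017}; and (iii) the comparabilities $V_\alpha(Q_S)\simeq V_\alpha(Q_S^{up})$ together with the pairwise disjointness of $\{Q_S^{up}\}$. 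The only cosmetic difference is that every finite sum $\sum_{j=1,2}$ over the two adjacent grids becomes a sum $\sum_{i=1}^{N}$ over the $N$ dyadic systems furnished by the $1/3$-trick on the space of homogeneous type $\partial\B$, which is harmless.

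For the two sufficiency directions, the one place where holomorphicity is used is the oscillation estimate, the ball analog of Lemma \ref{OscLemma}; this transfers because holomorphic functions on $\B$ enjoy the sub-mean value property over Bergman balls, so the telescoping argument along a geodesic chain of dyadic tents connecting $z$ to $Q_S^{up}$ is identical. Granting it, I would deduce sufficiency of $b\in{\rm BMOA}_{\nu}(\B)$ for boundedness exactly as in Section \ref{20250116subsec01}, dominating $\mathcal{A}_b$ pointwise by $\|b\|_{{\rm BMOA}_{\nu}(\B)}\,\mathcal{A}_{\calD_i}(M_{\calD_i}(f)\nu)$ and invoking the weighted maximal bounds together with $\nu=(\mu/\lambda)^{1/2}$. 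For compactness, I would decompose $K_\alpha=\sum_{t=0}^{3}K_\alpha^{t}$ via smooth cutoffs in $|1-\langle z,w\rangle|$ and in $(|z|+|w|)^{-1}$ (the ball replacements for $|z-\overline{w}|$ and proximity to the boundary/infinity): the truncated main piece $[b,P_\alpha^{3}]$ is compactly supported and Hilbert--Schmidt by the integral-kernel estimate of Section \ref{s:3}, while the three near pieces have small operator norm via the localized sparse bound and the ${\rm VMOA}_{\nu}$ hypothesis. Necessity of ${\rm VMOA}_{\nu}$ for compactness is the disjoint-support argument of Section \ref{s:3}: produce test functions $f_j$ on well-separated tents with $\|f_j\|_{L^2_\alpha(\mu)}\simeq 1$ and $\|[b,P_\alpha]f_j\|_{L^2_\alpha(\lambda)}\gtrsim \delta_0$, then contradict compactness through the $\psi=\sum_i f_{j_i}/i$ construction.

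The step I expect to be the main obstacle is the necessity direction for boundedness via the complex median method. Lemma \ref{ComplexMedian} is purely measure-theoretic and applies verbatim to $b$ on any tent $Q_S$; what must be re-derived are the two geometric kernel facts used in Section \ref{20250116subsec02}. First, the analog of the kernel estimate \eqref{kernelestimate}, namely $|1-\langle z,w\rangle|^{-(n+1+\alpha)}\simeq_{\frakA} V_\alpha(Q_S)^{-1}$ for $z\in Q_S$ and $w$ in a test set $S_S$ placed at Bergman height $\simeq \frakA$ above $Q_S$. Second, the analog of the angle estimate: that $\arg(1-\langle z,w\rangle)$ stays inside a small window. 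In the real-variable case one used that $z-\overline{w}$ points in a nearly fixed direction; on $\B$ I would instead rotate unitarily so that the center $\zeta_S$ of $S$ lies on a coordinate axis and show that for $w$ at the prescribed height the quantity $1-\langle z,w\rangle$ is governed by its $w$-dependence through $1-|w|^2$, so that its argument is $O(1/\frakA)$-close to a fixed value. With these in hand, \textit{Step I} and \textit{Step II} (removing the absolute values from $|b(z)-b(w)|$ and from the kernel, the latter after choosing $\theta_1$ so that $e^{i\theta_1}(1-\langle z_{Q_S},w_{S_S}\rangle)^{-(n+1+\alpha)}>0$) run as before, and everything downstream — rewriting the double integral as $[b,P_\alpha](\chi_{F_1})$, the Cauchy--Schwarz step, and the weight identity \eqref{20250117eq10} — is formally identical. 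For the $[b,P_\alpha^{+}]$ statements the kernel $|1-\langle z,w\rangle|^{-(n+1+\alpha)}$ is already positive, so \textit{Step II} is vacuous and the argument simplifies precisely as in the half-plane case.
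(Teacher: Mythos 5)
Your proposal is correct and follows essentially the same route as the paper: the authors prove the ball case exactly by transferring the half-plane arguments (sparse domination and oscillation lemma for sufficiency, the kernel decomposition for compactness, and the complex median method with the ball kernel $(1-\langle z,w\rangle)^{-(n+1+\alpha)}$ for necessity), relying on the Bergman tree structure and the weighted dyadic estimates of \cite{RTW2017}, and explicitly leave these details to the reader. Your identification of the genuinely new geometric inputs --- the ball analogs of the kernel estimate \eqref{kernelestimate} and the angle estimate, with the argument of $1-\langle z,w\rangle$ pinned down to an $O(1/\frakA)$ window by placing the test set at depth $\simeq\frakA$ in the parabolic scaling --- is precisely the content that must be checked, and your sketch of it is sound.
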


Next, we treat the case when $b$ is not holomorphic. 

\begin{defn}\label{def-BMO}
For $b \in L^2(\B)$, we say $b \in {\rm BMO}^2 \left(\B \right)$ if 
$$
\left\|b \right\|_{{\rm BMO}^{2}(\B)}:=\sup_{S \in \calD} \left({1\over V(Q_S)}\int _{Q_S} |b(z)-b_{Q_S}|^2 dV(z) \right)^{1/2}<+\infty.
$$
\end{defn}

\begin{defn} [Aleman--Pott--Regeura weights ($\mathcal{APR}$)]
Let $\mu$ be a weight on $\B$, we say $\mu$ is a \emph{Aleman--Pott--Regeura weight ($\mathcal{APR}$)} or is of \emph{bounded hyperbolic oscillation} if there exists some $C_\mu>0$, such that for any $S \in \mathcal{D}$ and $z, w \in \beta_{\B} (c_S, R)$, there holds
$$
C_{\mu}^{-1} \mu(w) \le \mu(z) \le C_\mu \mu(w). 
$$
Here, $\beta_\B(z, r) \supset Q_S^{up}$ is the Bergman metric on $\B$ centered at $z \in \B$ with radius $r>0$, and $R>0$ is the universal constant that was determined in Proposition \ref{diskslattice} (see, also \cite[Theorem 2.5]{HH2021}). 
\end{defn}

\begin{thm}
Let $\sigma \in \calB_2 \cap \mathcal{APR}$. Then if $b \in {\rm BMO}^2(\B)$, the commutator $[b, P_\alpha]$ is bounded from  $L_\alpha^2 \left(\B, \sigma \right)$ to $L_\alpha^2 \left(\B, \sigma \right) $.  Moreover, we have
$$ \|[b, P_\alpha]\|_{L_\alpha^2 \left(\B, \sigma \right) \to L_\alpha^2 \left(\B, \sigma \right) } \lesssim \|b\|_{ {\rm BMO}^{2} \left(\B \right) }.
$$
Finally, the $\mathcal{APR}$ condition is sharp in this case. 
\end{thm}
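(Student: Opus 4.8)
The plan is to transplant the proof of Theorem~\ref{mainthm001dropanalytic} verbatim from $\R_+^2$ to $\B$, replacing each auxiliary object by its Bergman-tree analog on the ball; all the required function-space facts (the ball versions of Propositions~\ref{splitting}, \ref{Lip}, \ref{Carleson}, \ref{diskslattice}) are classical and available from \cite{Si2022, Zhu07, RTW2017}. First I would set up the Hankel formalism on the ball: define $H_\varphi f := (I-P_\alpha)M_\varphi P_\alpha f$ on $L^2_\alpha(\B)$, record the operator identity $[b,P_\alpha]=H_b-H_{\overline b}^*$ (adjoint in the unweighted $L^2_\alpha(\B)$ pairing), and note that $\sigma\in\calB_2\cap\mathcal{APR}$ iff $\sigma^{-1}\in\calB_2\cap\mathcal{APR}$. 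By duality this reduces the boundedness claim to proving $H_b\colon L^2_\alpha(\B,\sigma)\to L^2_\alpha(\B,\sigma)$ is bounded. I would then split $b=b_1+b_2$ via the ball analog of Proposition~\ref{splitting}, with $b_1\in\textnormal{BO}(\B)$ and $b_2\in\textnormal{BA}(\B)$, and bound $H_{b_1}$ and $H_{b_2}$ separately.

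For $H_{b_1}$, using the ball Lipschitz estimate (Proposition~\ref{Lip}) together with the $\C^n$ Bergman-metric bound $d_\beta(z,w)\lesssim_\varepsilon (1-|z|^2)^{-\varepsilon}(1-|w|^2)^{-\varepsilon}|1-\langle z,w\rangle|^{2\varepsilon}$, I would dominate $|H_{b_1}f(z)|$ by $\|b_1\|_{\textnormal{BO}}$ times the sum of $P_\alpha^{+}g$ and a modified Bergman operator $\mathcal{Q}_\alpha^\varepsilon g$, where $g:=P_\alpha f$ and $\mathcal{Q}_\alpha^\varepsilon$ has kernel $(1-|z|^2)^{-\varepsilon}(1-|w|^2)^{-\varepsilon}|1-\langle z,w\rangle|^{-(n+1+\alpha)+2\varepsilon}$. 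Boundedness of $P_\alpha^{+}$ on $L^2_\alpha(\B,\sigma)$ follows from $\sigma\in\calB_2$ by \cite[Theorem~1]{RTW2017}, so the work is the boundedness of $\mathcal{Q}_\alpha^\varepsilon$. Mirroring \eqref{ModSparse}--\eqref{FurtherWeightedB2}, this reduces to the testing condition $\sup_{S\in\calD}\langle\rho_\varepsilon\sigma\rangle_{Q_S}^{dV_{\alpha-\varepsilon}}\langle\rho_\varepsilon^{-1}\sigma^{-1}\rangle_{Q_S}^{dV_{\alpha-3\varepsilon}}<\infty$, where $\rho_\varepsilon(w):=(1-|w|^2)^{-\varepsilon}$, which I would verify cube-by-cube over the Bergman tree using the reverse H\"older inequalities for $\sigma$ and $\sigma^{-1}$ guaranteed by $\sigma\in\calB_2\cap\mathcal{APR}$. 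Alternatively one may cite the ball analog of Theorem~\ref{DropRH1}, which even removes the $\mathcal{APR}$ hypothesis for this piece.

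For $H_{b_2}$ with $b_2\in\textnormal{BA}(\B)$, I would factor $H_{b_2}=(I-P_\alpha)M_{b_2}P_\alpha$ and use boundedness of $P_\alpha$ on $L^2_\alpha(\B,\sigma)$ to reduce to showing $|b_2|^2\sigma\,dV_\alpha$ is a Carleson measure for $A^2_\alpha(\B,\sigma)$. This is the ball version of the final estimate in Theorem~\ref{mainthm001dropanalytic}: decompose $\B$ into Whitney pieces $Q_S^{up}$ via Proposition~\ref{diskslattice}, use the $\mathcal{APR}$ condition to replace $\sigma$ by its roughly-constant value $\sigma(c_S)$ on each dyadic ball $\beta_\B(c_S,2R)$, apply subharmonicity of $|g|^2$ and the $\textnormal{BA}$ control on averages of $|b_2|^2$, and sum using the finite-overlap property; the unweighted Carleson bound is the ball form of Proposition~\ref{Carleson}. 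For sharpness I would transplant the counterexample of Section~\ref{20250107subsec01}, placing the singularity on an interior sphere instead of an interior line: take $\alpha=0$ and $\sigma(z)=\big|\,|z|^2-\tfrac12\big|^{-1/2}$, check $\sigma\in\calB_2$ (small Carleson tents never reach the sphere $|z|^2=\tfrac12$, while large tents average out the singularity) but $\sigma\notin\mathcal{APR}$ (a single dyadic Bergman ball straddling that sphere sees wildly different values of $\sigma$, and $\int\sigma^2=\infty$ on such a ball), set $b=\sigma^{1/2}$ cut off to a small Euclidean ball meeting the sphere so that $b\in\textnormal{BA}(\B)\subset\textnormal{BMO}^2(\B)$, and test $[b,P_\alpha]$ against the indicator of that ball exactly as in $\R_+^2$ to force $\|bP(f)-P(bf)\|_{L^2(\sigma)}=+\infty$.

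The main obstacle I expect is the verification of the testing condition replacing \eqref{FurtherWeightedB2}, since the exponents now carry the dimension $n$ and one must track the interaction of the $(1-|z|^2)^{-\varepsilon}$ factors with the reverse H\"older exponent $\tau_\sigma$ coming from $\mathcal{APR}$; the remaining steps are a fairly mechanical transcription. A secondary delicate point is confirming in the sharpness argument that the chosen weight genuinely fails $\mathcal{APR}$ while remaining $\calB_2$ in the anisotropic ball geometry, because a dyadic Bergman ball near the interior sphere is no longer a product of intervals and its interaction with the level set $|z|^2=\tfrac12$ must be analyzed directly.
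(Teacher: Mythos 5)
Your proposal is correct and takes essentially the same approach the paper intends: the paper proves the ball theorem precisely by transplanting Theorem \ref{mainthm001dropanalytic} (the Hankel reduction $[b,P_\alpha]=H_b-H_{\overline{b}}^*$, the ${\rm BO}+{\rm BA}$ splitting, the modified operator $\mathcal{Q}_\alpha^\varepsilon$ with its $\calB_2$-type testing condition verified via the reverse H\"older inequality from $\calB_2\cap\mathcal{APR}$, and the weighted Carleson-measure estimate for the ${\rm BA}$ part) onto the Bergman-tree structure of $\B$, explicitly leaving the details to the reader. Your sharpness example $\sigma(z)=\big|\,|z|^2-\tfrac12\big|^{-1/2}$ with the symbol $b=\sigma^{1/2}$ cut off near the interior sphere is the natural ball analog of the counterexample in Section \ref{20250107subsec01}, and the verifications you outline (small tents are anchored at $\partial\B$ and so avoid the sphere, large tents average out the singularity, and the pluriharmonic mean-value property gives the lower bound on $P_\alpha f$) are exactly what is needed.
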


\bigskip 
\begin{bibdiv}
\begin{biblist}

\bib{APR2019}{article}{,
    AUTHOR = {Aleman, Alexandru},
    author={Pott, Sandra}, 
    author={Reguera, Mar\'ia Carmen},
     TITLE = {Characterizations of a limiting class {$B_\infty$} of
              {B}\'ekoll\'e-{B}onami weights},
   JOURNAL = {Rev. Mat. Iberoam.},
    VOLUME = {35},
      YEAR = {2019},
    NUMBER = {6},
     PAGES = {1677--1692},
}

\bib{Axler1986}{article}{,
    AUTHOR = {Axler, Sheldon},
     TITLE = {The {B}ergman space, the {B}loch space, and commutators of multiplication operators},
   JOURNAL = {Duke Math. J.},
    VOLUME = {53},
      YEAR = {1986},
    NUMBER = {2},
     PAGES = {315--332},
}

\bib{BBCZ1990}{article}{,
    AUTHOR = {B\'ekoll\'e, David},
    AUTHOR = {Berger, Charles A},
    AUTHOR = {Coburn, Lewis A.}, 
    AUTHOR ={Zhu, Kehe},
     TITLE = {B{MO} in the {B}ergman metric on bounded symmetric domains},
   JOURNAL = {J. Funct. Anal.},
  FJOURNAL = {Journal of Functional Analysis},
    VOLUME = {93},
      YEAR = {1990},
    NUMBER = {2},
     PAGES = {310--350},
}

\bib{Bloom1985}{article}{,
    AUTHOR = {Bloom, Steven},
     TITLE = {A commutator theorem and weighted {BMO}},
   JOURNAL = {Trans. Amer. Math. Soc.},
    VOLUME = {292},
      YEAR = {1985},
    NUMBER = {1},
     PAGES = {103--122},
}


\bib{GHK2022}{article}{,
author={Gan, Chun},
author={Hu, Bingyang}, 
author={Khan, Ilyas},
     TITLE = {Dyadic decomposition of convex domains of finite type and
              applications},
   JOURNAL = {Math. Z.},
    VOLUME = {301},
      YEAR = {2022},
    NUMBER = {2},
     PAGES = {1939--1962},
}

\bib{HLW2017}{article}{,
    AUTHOR = {Holmes, Irina},
    author={Lacey, Michael T.}, 
    author={Wick, Brett D.},
     TITLE = {Commutators in the two-weight setting},
   JOURNAL = {Math. Ann.},
  FJOURNAL = {Mathematische Annalen},
    VOLUME = {367},
      YEAR = {2017},
    NUMBER = {1-2},
     PAGES = {51--80},
}

\bib{HH2021}{article}{,
    author = {Hu, Bingyang},
    author={Huo, Zhenghui},
     TITLE = {Dyadic {C}arleson embedding and sparse domination of weighted
              composition operators on strictly pseudoconvex domains},
   JOURNAL = {Bull. Sci. Math.},
  FJOURNAL = {Bulletin des Sciences Math\'ematiques},
    VOLUME = {173},
      YEAR = {2021},
     PAGES = {Paper No. 103067, 32},
}

\bib{HHLPW2024}{article}{,
author={Hu, Bingyang}, 
author={Huo, Zhenghui}, 
author={Lanzani, Loredana}, 
author={Palencia, Kevin},
author={Wagner, Nathan A.}, 
     TITLE = {The commutator of the {B}ergman projection on strongly
              pseudoconvex domains with minimal smoothness},
   JOURNAL = {J. Funct. Anal.},
    VOLUME = {286},
      YEAR = {2024},
    NUMBER = {1},
     PAGES = {Paper No. 110177, 45},
}

\bib{HWW2021}{article}{,
    AUTHOR = {Huo, Zhenghui},
    author= {Wagner, Nathan A.},
    author={Wick, Brett D.},
     TITLE = {Bekoll\'e-{B}onami estimates on some pseudoconvex domains},
   JOURNAL = {Bull. Sci. Math.},
  FJOURNAL = {Bulletin des Sciences Math\'ematiques},
    VOLUME = {170},
      YEAR = {2021},
     PAGES = {Paper No. 102993, 36},
}

\bib{LaceyLi2022}{article}{, 
    AUTHOR = {Lacey, Michael},
    author={Li, Ji},
     TITLE = {Compactness of commutator of {R}iesz transforms in the two
              weight setting},
   JOURNAL = {J. Math. Anal. Appl.},
    VOLUME = {508},
      YEAR = {2022},
    NUMBER = {1},
     PAGES = {Paper No. 125869, 11},
}

\bib{Hytonen2021}{article}{,
    AUTHOR = {Hyt\"onen, Tuomas P.},
     TITLE = {The {$L^p$}-to-{$L^q$} boundedness of commutators with
              applications to the {J}acobian operator},
   JOURNAL = {J. Math. Pures Appl. (9)},
    VOLUME = {156},
      YEAR = {2021},
     PAGES = {351--391},
}

\bib{LS2004}{article}{,
    AUTHOR = {Lanzani, Loredana},  AUTHOR={Stein, Elias M.},
     TITLE = {Szeg\"o{} and {B}ergman projections on non-smooth planar
              domains},
   JOURNAL = {J. Geom. Anal.},
  FJOURNAL = {The Journal of Geometric Analysis},
    VOLUME = {14},
      YEAR = {2004},
    NUMBER = {1},
     PAGES = {63--86},
}

\bib{Lerner2017}{article}{,
author={Lerner, Andrei K.}, 
author={Ombrosi, Sheldy},
author={Rivera-R\'ios, Israel P.},
     TITLE = {On pointwise and weighted estimates for commutators of
              {C}alder\'on-{Z}ygmund operators},
   JOURNAL = {Adv. Math.},
    VOLUME = {319},
      YEAR = {2017},
     PAGES = {153--181},
}

\bib{Li1992}{article}{,
    AUTHOR = {Li, Huiping},
     TITLE = {B{MO}, {VMO} and {H}ankel operators on the {B}ergman space of
              strongly pseudoconvex domains},
   JOURNAL = {J. Funct. Anal.},
    VOLUME = {106},
      YEAR = {1992},
    NUMBER = {2},
     PAGES = {375--408},
}

\bib{Li1994}{article}{,
    AUTHOR = {Li, Huiping},
     TITLE = {Hankel operators on the {B}ergman spaces of strongly
              pseudoconvex domains},
   JOURNAL = {Integral Equations Operator Theory},
    VOLUME = {19},
      YEAR = {1994},
    NUMBER = {4},
     PAGES = {458--476},
}

\bib{PottReguera2013}{article}{,
    author={Pott, Sandra}, 
    author = {Reguera, Maria Carmen},
     TITLE = {Sharp {B}\'ekoll\'e{} estimates for the {B}ergman projection},
   JOURNAL = {J. Funct. Anal.},
    VOLUME = {265},
      YEAR = {2013},
    NUMBER = {12},
     PAGES = {3233--3244},
}

\bib{RTW2017}{article}{,
author={Rahm, Rob },
author={Tchoundja, Edgar },
    AUTHOR = {Wick, Brett D.},
     TITLE = {Weighted estimates for the {B}erezin transform and {B}ergman
              projection on the unit ball},
   JOURNAL = {Math. Z.},
    VOLUME = {286},
      YEAR = {2017},
    NUMBER = {3-4},
     PAGES = {1465--1478},
}

\bib{StockdaleWagner2023}{article}{, 
    AUTHOR = {Stockdale, Cody B.}, 
    author={Wagner, Nathan A.},
     TITLE = {Weighted theory of {T}oeplitz operators on the {B}ergman
              space},
   JOURNAL = {Math. Z.},
  FJOURNAL = {Mathematische Zeitschrift},
    VOLUME = {305},
      YEAR = {2023},
    NUMBER = {1},
     PAGES = {Paper No. 10, 29},
}

\bib{Si2022}{article}{,
    AUTHOR = {Si, Jiajia},
     TITLE = {B{MO} and {H}ankel operators on {B}ergman space of the
              {S}iegel upper half-space},
   JOURNAL = {Complex Anal. Oper. Theory},
    VOLUME = {16},
      YEAR = {2022},
    NUMBER = {2},
     PAGES = {Paper No. 24, 30},
}

\bib{Wei2024}{arXiv}{
  author={Wei, Zhenguo}, 
author={Zhang, Hao},
  title={Complex median method and Schatten class membership of commutators},
  date={2024},
  eprint={2411.05810},
  archiveprefix={arXiv},
  primaryclass={math.FA},
}

\bib{Zhu1992}{article}{,
    AUTHOR = {Zhu, Kehe},
     TITLE = {B{MO} and {H}ankel operators on {B}ergman spaces},
   JOURNAL = {Pacific J. Math.},
    VOLUME = {155},
      YEAR = {1992},
    NUMBER = {2},
     PAGES = {377--395},
}

\bib{Zhu07}{book}{,
    AUTHOR = {Zhu, Kehe},
     TITLE = {Operator theory in function spaces},
    SERIES = {Mathematical Surveys and Monographs},
    VOLUME = {138},
   EDITION = {Second},
 PUBLISHER = {American Mathematical Society, Providence, RI},
      YEAR = {2007},
     PAGES = {xvi+348},
}

\bib{Zimmer2023}{article}{,
    AUTHOR = {Zimmer, Andrew},
     TITLE = {Hankel operators on domains with bounded intrinsic geometry},
   JOURNAL = {J. Geom. Anal.},
    VOLUME = {33},
      YEAR = {2023},
    NUMBER = {6},
     PAGES = {Paper No. 176, 29},
}

\end{biblist}
\end{bibdiv}

\end{document}